\documentclass[11pt,reqno]{amsart}
\usepackage[margin=1.2in]{geometry}
\usepackage{mathrsfs}
\usepackage{amsthm}
\usepackage{amsmath}
\usepackage{amssymb}
\usepackage{amsfonts}
\usepackage[dvipdfmx]{graphicx}
\usepackage{latexsym}
\usepackage{comment}
\usepackage{yhmath}
\usepackage{ascmac}
\usepackage{caption}
\usepackage{mathtools}
\usepackage[pagewise]{lineno}
\usepackage{dsfont}
\usepackage{color}
\usepackage{bbm}
\usepackage{bm}

\usepackage[numbers,sort&compress]{natbib}

\def\qed{\hfill $\Box$}

\theoremstyle{plain}

\newtheorem{thm}{Theorem}[section]
\newtheorem{lem}[thm]{Lemma}
\newtheorem{cor}[thm]{Corollary}

\newtheorem*{condi1*}{Condition 1}
\newtheorem*{condi2*}{Condition 2}
\newtheorem*{condi3*}{Condition 3}
\newtheorem*{condi4*}{Condition 4}
\newtheorem{pro}[thm]{Proposition}
\theoremstyle{definition}
\newtheorem{df}[thm]{Definition}

\newtheorem{rem}[thm]{Remark}

\newtheorem*{prf*}{Proof}

\newtheorem*{pf*}{}
\newtheorem*{lem*}{LemmaA}
\newtheorem*{lm*}{LemmaB}
\newtheorem*{stra*}{Strategy for the proof of main result A}

\theoremstyle{plain}
\newtheorem{thmx}{Theorem}

\def\g2{l\ge2}

\def\m2l{\mathcal{L}_2}

\newcommand{\norm}[1]{\ensuremath{ \Vert#1\Vert }} 

\newcommand{\vertiii}[1]{{\left\vert\kern-0.25ex\left\vert\kern-0.25ex\left\vert #1
	\right\vert\kern-0.25ex\right\vert\kern-0.25ex\right\vert}}

\newcommand{\Leb}{\mathrm{Leb}}
\newcommand{\Lip}{{\mathrm{Lip}}}

\newcommand{\Om}{\ensuremath{\Omega}}

\newcommand{\om}{\ensuremath{\omega}}
\newcommand{\lm}{\ensuremath{\lambda}}

\newcommand{\sg}{\ensuremath{\sigma}}
\newcommand{\ep}{\ensuremath{\epsilon}}

\newcommand{\dl}{\ensuremath{\delta}}

\DeclareSymbolFont{bbold}{U}{bbold}{m}{n}
\DeclareSymbolFontAlphabet{\mathbbold}{bbold}

\DeclareMathOperator*{\esssup}{ess\sup}
\DeclareMathOperator*{\essinf}{ess\inf}

\DeclareMathOperator{\diam}{diam}

\newcommand{\cE}{\ensuremath{\mathcal{E}}}
\newcommand{\cF}{\ensuremath{\mathcal{F}}}

\newcommand{\cK}{\ensuremath{\mathcal{K}}}
\newcommand{\cL}{\ensuremath{\mathcal{L}}}

\newcommand{\tcL}{\ensuremath{\tilde{\mathcal{L}}}}

\newcommand{\cQ}{\ensuremath{\mathcal{Q}}}

\newcommand{\cS}{\ensuremath{\mathcal{S}}}

\newcommand{\cW}{\ensuremath{\mathcal{W}}}

\newcommand{\cZ}{\ensuremath{\mathcal{P}}}

\newcommand{\sB}{\ensuremath{\mathscr{B}}}

\newcommand{\sF}{\ensuremath{\mathscr{F}}}

\newcommand{\sL}{\ensuremath{\mathscr{L}}}
\newcommand{\sM}{\ensuremath{\mathscr{M}}}

\newcommand{\CC}{\ensuremath{\mathbb C}} 

\newcommand{\NN}{\ensuremath{\mathbb N}}

\newcommand{\RR}{\ensuremath{\mathbb R}}

\newcommand{\ZZ}{\ensuremath{\mathbb Z}} 

\def\lra{\longrightarrow}
\def\var{\text{{\rm var}}}
\def\Var{\text{{\rm Var}}}
\def\BV{\text{{\rm BV}}}
\def\Leb{\text{{\rm Leb}}}

\title[FCB for Open dynamics]{Functional correlation bound for random Lasota--Yorke maps with holes and its applications to conditional normal approximations}

\author[Juho Lepp\"anen]{Juho Lepp\"anen}
\address{Department of Mathematics, Tokai University, Kanagawa, 259-1292, Japan}
\email{leppanen.juho.heikki.g@tokai.ac.jp}

\author[Yuto Nakajima]{Yuto Nakajima}
\address{Faculty of Science and Engineering, Doshisha University, Kyoto, 610-0394, Japan}
\email{yunakaji@mail.doshisha.ac.jp}

\author[Yushi Nakano]{Yushi Nakano}
\address{Faculty of Science, Hokkaido University, Hokkaido, 060-0810, Japan}
\email{yushi.nakano@math.sci.hokudai.ac.jp}

\keywords{Random Lasota--Yorke maps with holes; Functional correlation bound; Conditional central limit theorem; Stein's method}

\subjclass[2020]{37A50, 37H15, 60F05, 37C30}

\date{}

\begin{document}
\begin{abstract}
This paper investigates the statistical properties of random open dynamical systems generated by families of Lasota--Yorke maps.
Open systems, in which trajectories may escape through `holes', model transient phenomena and present additional difficulties for statistical analysis because the underlying ensemble loses mass over time.
We show that the framework of functional correlation bounds (FCB), originally developed for closed systems, can also be adapted to this random open setting.
The extension requires new ingredients based on Lasota--Yorke type inequalities in order to control the effect of escaping trajectories.
We establish an FCB with exponential decay and combine it with the abstract normal-approximation results of \cite{LNN25,LS20} to obtain a conditional CLT with rates in Wasserstein distance and a conditional functional CLT with a rate in an integral distance over Barbour's class of smooth test functions.
Additionally, we adapt Tikhomirov's method to obtain a bound in Kolmogorov distance for the conditional CLT.
\end{abstract}
\maketitle

\section{Introduction}

Open dynamical systems, where trajectories can escape from the phase space through designated `holes', model transient phenomena in a variety of scientific settings. Their statistical analysis is substantially more delicate than in the closed case because the population of surviving trajectories decreases over time. The aim of this paper is to establish a Functional Correlation Bound (FCB) for a class of random open dynamical systems and to use it to derive quantitative conditional normal approximation results.

The study of statistical limit theorems for dynamical systems, particularly the Central Limit Theorem (CLT), often seeks precise rates of convergence. Stein's method, introduced by Charles Stein in the 1970s, has emerged as a powerful tool for obtaining such rates in diverse settings, including dynamical systems (see e.g.~\cite{HK18, DGS, G, GD, H, HY, He, HeL, HLS20, K, P} and references therein). While significant progress has been made for closed systems, the development of corresponding theories for open systems
requires new ideas to handle the effect of escaping orbits.

Recent advancements in limit theorems for open dynamical systems have been made, notably by the third author and his collaborators \cite{AFGNV}, employing methods based on the Nagaev--Guivarc'h spectral method, itself inspired by earlier work of Collet and Mart\'{\i}nez \cite{ColletMartinez99}. Our approach, in contrast, uses the framework of Functional Correlation Bounds (FCB) for Stein's method in chaotic dynamical systems, developed in a series of works by the first author and his collaborators \cite{L17,HLS20,LS20,LNN25}. 
A key insight of this paper is the discovery that the FCB framework proves remarkably effective even in the setting of open dynamical systems. 
The abstract FCB mechanism remains the same, but the proof of the FCB itself must be adapted to account for mass loss, and our argument relies on Lasota--Yorke type inequality for open transfer operators that is tailored to the FCB setting.

This paper focuses on random open dynamical systems generated by families of Lasota--Yorke maps, a broad class encompassing many chaotic systems, and derives an explicit FCB for these systems. As applications, we establish the following results for random Lasota--Yorke maps by applying abstract theorems for systems satisfying FCB:
\begin{enumerate}
\item a conditional CLT with explicit rates of convergence in both the Wasserstein and Kolmogorov distances; the former follows as an application of the abstract theorem in \cite{L17}, while the latter follows from a new abstract theorem proved in Section~\ref{s:abst} using Tikhomirov's method \cite{T80};
\item a conditional functional CLT with an explicit rate of convergence with respect to an integral distance over Barbour's class of smooth test functions, as an application of the abstract theorem in \cite{LNN25}.
\end{enumerate}
All of these results are new even in the autonomous (deterministic) setting of an open dynamical system generated by iterates of a single Lasota--Yorke map. In contrast, several results on rates of convergence in the quenched CLT and functional CLT for random closed expanding dynamical systems exist in the literature; see, for example, \cite{DH20, LSVW25, H23, DL25, LNN25} and references therein.

The paper is organized as follows. Section~2 introduces the random open Lasota--Yorke setting and the standing assumptions. Section~3 states the main results: Theorem~\ref{FCB} gives the functional correlation bound, and Theorem~\ref{CLT} derives its consequences for conditional normal approximations. Section~4 proves Theorem~\ref{FCB}. Section~5 proves Theorem~\ref{CLT}, apart from the estimate with respect to the Kolmogorov distance, which is reduced to an abstract result and completed in Section~6. Finally, Section~7 verifies the assumptions for the non-deterministic example in Proposition~\ref{ex:1b}.

\section{Setup and standing assumptions}
In this section, we introduce the random open Lasota--Yorke systems considered in this paper and collect the standing assumptions used later.
We first describe the underlying random maps, holes, and refined partitions, which lead to Conditions~1 and~2.
We then pass to the BV framework and the associated closed and open transfer operators, recall the conformal objects, and formulate Conditions~3 and~4.

\subsection{Random open Lasota--Yorke maps}

Let $(\Om,\sF,m)$ be a probability space and let $\sg:\Om\to\Om$ be an invertible, ergodic, and $m$-preserving map. Let $I=[0, 1]$ and consider a family of maps 
\[T_\om:I\to I, \omega\in \Om\]
such that for each $\omega\in \Omega,$ $T_{\omega}: I\rightarrow I$ is surjective.
For each $n\in\mathbb N$ and $\omega\in \Omega$, define $T_\omega^n:I\to I$ by 
\begin{align*}
T_\om^{0}(x)=x,\ T_\om^n(x)=T_{\sg^{n-1}(\om)}\circ\dots\circ T_\om(x)\ \text{for}\ x\in I.
\end{align*}
We assume that for each $\omega\in \Omega,$ there exists a finite partition $\mathcal P_\omega$ of $I$ such that each $P\in \mathcal P_{\omega}$ is an interval, $T_{\omega}(P)$ is an interval, and the restriction $T_{\omega}|_{P}$ is monotonic and continuous.
For $n\in \mathbb N$ let $\mathcal P_{\omega}^{(n)}$ denote the monotonicity partition of $T_\om^n$ on $I$ which is given by 
\begin{align*}
\mathcal P_{\omega}^{(n)}:=\bigvee_{j=0}^{n-1}T_\omega^{-j}\left(\mathcal P_{\sigma^j(\omega)} \right).
\end{align*}
We further assume that for each $\omega\in \Omega,$ the partitions $\mathcal P_{\omega}^{(n)}$ are generating, that is, 
\[\bigvee_{n=1}^{\infty}\mathcal P_{\omega}^{(n)}=\mathscr B_I,\]where $\mathscr B_I$ denotes the Borel $\sigma$-algebra of $I.$
Given $P\in\mathcal P_{\omega}^{(n)}$, we denote by
$$	
T_{\om,P}^{-n}:T_\om^n(P)\lra P
$$ 
the inverse branch of $T_\om^n$ which maps $T_\om^n(x)$ to $x$ for each $x\in P$. 
Below, we always suppose the family satisfies the following: 
\begin{condi1*}$\quad$
\begin{itemize} 
\item {\em Measurability condition}: The skew product map $T: \Omega\times I\rightarrow \Omega\times I: (\omega, x)\mapsto (\sigma (\omega), T_{\omega}(x))$ is measurable with respect to the product $\sigma-$algebra $\mathcal F\otimes \mathscr B_I;$
\item {\em Lipschitz condition}: $\log\#\cZ_\om\in L^1(m)$;
\item {\em Covering condition}: for each $n\in \mathbb N,$ there exists $M(n)\in\NN$ such that for any $\om\in\Om$ and any $P\in\cZ_\om^{(n)}$ we have that $T_\om^{M(n)}(P)=I$;
\item {\em Regularity condition}: there exists $K\geq 1$ such that for each $\om\in\Om$, $P\in\cZ_\om$ we have 
 $T_\om\rvert_P$ is $C^2$ and for any $x\in P,$

\[\frac{|T_\om''(x)|}{|T_\om'(x)|}\leq K;\]

\item {\em Hyperbolicity condition}: there exist $1< \kappa_1\leq \kappa_2<\infty$ and $n_1\in\NN$ such that 
\begin{itemize}
\item $\max_{P\in \mathcal P_{\omega}}\sup_{x\in P}|T_\om'(x)|\leq \kappa_2$ for $m$-a.e. $\om\in\Om$,
\item $\min_{P\in \mathcal P_{\omega}}\inf_{x\in P}|(T_\om^{n_1})'(x)|\geq \kappa_1^{n_1}$ for $m$-a.e. $\om\in\Om$,
\end{itemize}
\item {\em Positive diameter condition}: for each $n\in\NN$ there exists 
$$
\ep_n:=\inf_{\om\in\Om}\min_{P\in\cZ_\om^{(n)}}\diam(P) >0.
$$
\end{itemize}
\end{condi1*}


Let $H \subset \Omega \times I$ be a measurable set with respect to the 
product $\sigma$-algebra $\mathcal{F} \otimes \mathscr B_I$
on $\Omega \times I$ such that
\[
0 < \mathrm{Leb}(H_{\omega}) < 1,
\]
where
 $H_{\omega} \subset I$ is the measurable set in $I$ being
uniquely determined by the condition that
\begin{equation*}
\{\omega\} \times H_{\omega} = H \cap \bigl(\{\omega\} \times I\bigr),
\end{equation*}
and $\mathrm{Leb}$ is the Lebesgue measure on $I$.

For each $\omega \in \Omega$, $n \geq 0$ we define
\begin{equation*}
X_{\omega,n} := \left\{ x \in I : T_\omega^j(x) \notin H_{\sigma^j \omega} \text{ for all } 0 \leq j \leq n \right\} = \bigcap_{j=0}^n T_\omega^{-j} (X_{\sigma^j \omega, 0}).
\end{equation*} We also define
\begin{equation*}
X_{\omega, \infty} := \bigcap_{n=0}^\infty X_{\omega,n} = \bigcap_{n=0}^\infty T_\omega^{-n}(X_{\sigma^n \omega, 0})
\end{equation*}
as the set of points which will never land in a hole under iteration of the maps $T^n_\omega$ for any $n \geq 0$. We call $X_{\omega, \infty}$ the \emph{$\omega$-surviving set}. Note that the sets $X_{\omega,n}$ and $X_{\omega, \infty}$ are measurable.

To formulate the remaining geometric assumptions on the open system, we refine the monotonicity partition according to the surviving set.
Recall that $\mathcal{P}_\omega^{(n)}$ denotes the partition of monotonicity of $T_\omega^n$. 
Let
$
 \widehat{\mathcal{P}}_\omega^{(n)}
$
denote the coarsest partition that is finer than $\mathcal{P}_\omega^{(n)},$ 
and in which each element of $\widehat{\mathcal{P}}_\omega^{(n)}$ is either disjoint from $X_{\omega,n-1}$ 
or fully contained in $X_{\omega,n-1}$. We then introduce the following subcollection:
\begin{equation}\label{eq:0315}
  \mathcal{P}_{\omega,*}^{(n)} := 
    \Bigl\{\, P \in \widehat{\mathcal{P}}_\omega^{(n)} : P \subseteq X_{\omega,n-1} \,\Bigr\}. 
\end{equation}
For each $n\in\NN$, $\omega\in \Omega,$ let $\xi_\om^{(n)}\geq 0$ denote the maximum number of contiguous non-full intervals for $T_\om^n$ in $\mathcal{P}_{\omega,*}^{(n)} ,$
 and define 
\begin{align*}\label{eq: def of F_om^n}
F_\om^{(n)}:=\min_{y\in I}\#\{T^{-n}_\omega (y)\}. 
\end{align*}
Let $h_{\omega}$ denote the number of connected component of $H_{\omega}.$
We further impose the following conditions on our random dynamical systems.
\begin{condi2*}$\quad$
\begin{itemize}
\item {\em Condition for connected components with holes}: $\log h_{\omega}\in L^1(m);$
\item {\em Hole condition}: for $m$-a.e.\ $\om\in\Om$ there exists $P\in\cZ_\om$ with $P\cap H_{\omega}=\emptyset$ such that $T_\om(P)=I$;	
\item {\em Growth condition for inverse points}: 
\[
\displaystyle{\frac{1}{n_1}\int_\Om \log F_\om^{(n_1)} \,dm(\om)>\log\frac{\kappa_2}{\kappa_1}+\int_\Om \log(\xi_\om^{(1)}+2)\, dm(\om)},
\]
where $n_1, \kappa_1,$ and $\kappa_2$ are determined by Hyperbolicity condition in Condition 1.
\end{itemize}
\end{condi2*}

The standing assumptions in this subsection are based on the random Lasota--Yorke framework in \cite[Chapter~1]{AFGV}.
This framework should be viewed as the random counterpart of the deterministic Lasota--Yorke maps with holes investigated by Liverani and Maume-Deschamps \cite{LM}.
Furthermore, in the perturbative small-hole regime, \cite[Theorem~J]{AFGV} extends \cite[Theorem~C]{LM} from single deterministic maps to random maps.

\subsection{Random open transfer operators}
We now turn to the operator-theoretic side of the setup.
We first introduce the relevant BV spaces and the associated closed and open transfer operators, and then state the consequences of \cite{AFGV} used to state Conditions~3 and~4.

Denote by $\mathrm{B}(I)$ the set of all bounded real-valued measurable functions on $I$ and for each $u \in \mathrm{B}(I)$ and each $A \subset I$ set
\[
\mathrm{var}_A(u) := \sup \left\{ \sum_{j=0}^{k-1} |u(x_{j+1}) - u(x_j)| : x_0 < x_1 < \ldots < x_k,\ x_j \in A \text{ for all } k \in \mathbb{N} \right\},
\]
which is the variation of $u$ over $A$. If $A = I$ we denote $\mathrm{var}(u) := \mathrm{var}_I(u)$. We set
\[
\mathrm{BV}(I) := \{ u \in \mathrm{B}(I) : \mathrm{var}(u) < \infty \}
\]
as the set of functions of bounded variation on $I$. Let
\[
\|u\|_\infty := \sup_{x\in I}|u(x)| \qquad \text{and} \qquad \|u\|_{\mathrm{BV}} := \mathrm{var}(u) + \|u\|_\infty.
\]
Then $(\mathrm{B}(I), \|\cdot\|_\infty)$ and $(\mathrm{BV}(I), \|\cdot\|_{\mathrm{BV}})$ are Banach spaces.
\begin{rem}\label{rem:BV-convention}
Throughout the paper we use the norm
$
\|u\|_{\mathrm{BV}}=\var(u)+\|u\|_\infty.
$
This is also the convention used in the Chapter~1 results of \cite{AFGV} that we invoke below.
In particular, we do not switch to an $L^1$-based BV norm in the sequel.
Whenever an $L^1$ estimate is needed, we use the elementary inequality
\begin{equation}\label{eq:BV-L1}
\|u\|_{\mathrm{BV}}
=
\var(u)+\|u\|_\infty
\le 2\var(u)+\|u\|_1,
\qquad u\in \mathrm{BV}(I),
\end{equation}
which follows from $\|u\|_\infty\le \var(u)+\|u\|_1$.
\end{rem}
In the following, given a function $u : \Omega \times I \to \mathbb{R}$, 
by $u_\omega : I \to \mathbb R$ we mean
\[
u_\omega(\cdot) := u(\omega, \cdot).
\]

Let $\mathrm{BV}_\Omega(I)$ denote the collection of all functions $u : \Omega \times I \to \mathbb{R}$ satisfying
\begin{itemize}
    \item[(i)] $u_\omega \in \mathrm{BV}(I)$ for each $\omega \in \Omega$;
    \item[(ii)] for each $x \in I$ the function $\Omega \ni \omega \mapsto u_\omega(x)$ is measurable;
    \item[(iii)] the function $\Omega \ni \omega \mapsto \|u_\omega\|_{\mathrm{BV}}$ is measurable.
\end{itemize}

\medskip


For $\omega\in \Om,$ define the closed transfer operator $\mathcal L_{\omega, 0}$ on ${\rm BV}(I)$ by
\[\mathcal L_{\omega, 0}(u)(x)=\sum_{T_{\omega}(y)=x}\frac{u(y)}{|T_{\omega}^{\prime}(y)|},
\]
where $|T_{\omega}^{\prime}(y)|$ is interpreted as the left derivative at a point for which the derivative is not well-defined.
Inductively, for any $n\in \mathbb N,$ we have 
\[
\mathcal L_{\omega, 0}^n(u)(x)=\sum_{T_{\omega}^n(y)=x}\frac{u(y)}{|(T_{\omega}^n)^{\prime}(y)|}.
\]
By a change of variables, one can easily verify that 
\begin{equation}\label{eq:0115a}
\mathrm{Leb}(\mathcal L_{\omega,0}^n(u)\cdot v)=\mathrm{Leb}(u\cdot v\circ T^n_\omega)
\end{equation}
for any $u,v \in \mathrm{BV}(I)$, $n\ge 1$ and $\omega \in \Omega$,
where $\nu (g)$ denotes the integration of a measurable function $g$ on a probability space with a measure $\nu$.

\medskip

We next introduce the random open transfer operator associated with the hole.
Throughout this paper, we denote by $\widehat{J}$ the indicator function on $J.$
For any $\omega\in \Om,$ define $\mathcal L_{\omega}$ on $\mathrm{BV}(I)$ by \[\mathcal L_{\omega} u=\mathcal L_{\omega, 0}(u\widehat X_{\omega, 0}).\]
Inductively, for any $n\in \mathbb N,$ we have 
$\mathcal L_{\omega}^n u=\mathcal L_{\omega, 0}^n (u \widehat X_{\omega, n-1})$, and hence, it follows from \eqref{eq:0115a} that
\begin{equation}\label{eq:0115b}
\Leb(\hat X_{\sigma^n\omega,0}\mathcal L_{\omega}^n u\cdot v)=\Leb(\mathcal L_{\omega, 0}^n (u \widehat X_{\omega, n})\cdot v)
=\Leb(u \widehat X_{\omega, n} \cdot v\circ T_\omega^n)
\end{equation}
for each $u,v \in \mathrm{BV}(I)$
since $\mathcal L_{\omega,0}^n(u_1\cdot u_2\circ T_\omega^n)=u_2\mathcal L_{\omega,0}^n(u_1)$.
Then Conditions 1 and 2 imply the following by \cite{AFGV}:
\begin{thm}
\label{lem0916}$\empty$
\begin{enumerate}
\item \cite[Theorem A(1)]{AFGV}: There exists a unique random probability measure $\nu_\infty$ supported in $\bigcup_{\omega\in \Omega}\{\omega\}\times X_{\omega, \infty}$ such that 
\begin{align*}
\nu_{\sg\om,\infty}(\cL_\om u)=\lm_\om\nu_{\om,\infty}(u),
\end{align*}
for each $u\in \mathrm{BV}(I)$, where 
\begin{align*}
\lm_\om:=\nu_{\sg\om,\infty}(\cL_\om\widehat I).
\end{align*} 	
Furthermore, we have that 
$\log\lm_\om\in L^1(m);$

\item \cite[Theorem A(2), Proposition 1.10.8 and Lemma 1.11.1]{AFGV}: There exists a function $\phi\in \mathrm{BV}_\Omega(I)$ such that $\nu_{\omega,\infty}(\phi_\omega)=1$ and $\Leb (\phi_\omega )>0$ for $m$-a.e. $\om\in\Om$ we have 
\begin{align*}
\cL_\om \phi_\om=\lm_\om \phi_{\sg(\om)}.
\end{align*}
Furthermore, $\omega\mapsto ||\phi_\omega||_{\mathrm{BV}}$ and $\omega\mapsto\inf \phi_\omega$ are tempered. 
\item  \cite[Theorem A(4) and Remark 1.11.8]{AFGV}: There exists a random conditionally invariant probability measure $\eta$ absolutely continuous with respect to $\mathrm{Leb}$, which is supported on 
\(\bigcup_{\omega \in \Omega} \{\omega\} \times X_{\omega,0}\), and whose disintegrations are given by
\[
    \eta_\omega(u) := 
    \frac{\Leb\left( \widehat X_{\omega, 0} \phi_\omega u \right)}
         {\Leb\left( \widehat X_{\omega, 0} \phi_\omega \right)}
\]
for all $u \in \mathrm{BV}(I)$.
Furthermore, for any $n\in \mathbb N$ and for $m$-almost every $\omega\in \Omega,$  we have $\eta_{\omega}(X_{\omega, n})=\lambda_{\omega}^n
:=\prod_{i=0}^{n-1}\lambda_{\sigma^{i}\omega}.$  
\item \cite[Theorem B]{AFGV}:
For any $n\in \mathbb N$ and $\omega\in \Omega,$ let 
\begin{equation*}Q_{\omega, n}(u)= (\lambda_\omega^n)^{-1} \mathcal{L}_\omega^n u-\nu_{\omega, \infty}(u)\,\phi_{\sigma^n(\omega)}
\end{equation*}
for $u \in \mathrm{BV}(I)$. Then there exists $\kappa \in (0,1)$
such that 
for each $u \in \mathrm{BV}(I)$ for $m$-a.e.\ $\omega \in \Omega$ and all $n \in \mathbb{N}$ we have
\[
    \left\| Q_{\omega, n}(u) \right\|_\infty
    \le D_u(\omega)\|u\|_{BV} \kappa^n
.\]
with a random variable $D_u:\Omega\to(0,\infty)$.
\end{enumerate}
\end{thm}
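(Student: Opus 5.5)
This theorem is a collection of results imported from \cite{AFGV}, so the plan is to check that our standing hypotheses (Conditions~1 and~2) imply the hypotheses of the cited statements. Then we translate their conclusions into the notation used here. No new estimates are needed, apart from two short computations in items (2)--(3).

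\textbf{Step 1: matching the setting.} Conditions~1 and~2 are copied from the random Lasota--Yorke framework with holes in \cite[Chapter~1]{AFGV}. The measurability, Lipschitz ($\log\#\cZ_\om\in L^1$), covering, regularity, hyperbolicity and positive diameter conditions correspond one-to-one to their standing assumptions. The same holds for the hole-component integrability $\log h_\om\in L^1(m)$, the existence of a full branch avoiding the hole, and the growth condition for inverse points. I will also check that the norm convention agrees, namely $\|u\|_{\BV}=\var(u)+\|u\|_\infty$ (see Remark~\ref{rem:BV-convention}). I will further check that the open operator $\cL_\om u=\cL_{\om,0}(u\widehat X_{\om,0})$ coincides with theirs, and that by induction $\cL^n_\om u=\cL^n_{\om,0}(u\widehat X_{\om,n-1})$.

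\textbf{Step 2: items (1), (2) and (4).} Item (1) is then \cite[Theorem~A(1)]{AFGV}: it gives the conformal family $\nu_{\om,\infty}$ supported on the surviving sets, with $\lm_\om=\nu_{\sg\om,\infty}(\cL_\om\widehat I)$ and $\log\lm_\om\in L^1$. Item (2) is \cite[Theorem~A(2)]{AFGV}: it gives the equivariant density $\phi$ with the normalization $\nu_{\om,\infty}(\phi_\om)=1$. The temperedness of $\|\phi_\om\|_{\BV}$ and of $\inf\phi_\om$ comes from Proposition~1.10.8 and Lemma~1.11.1 there. Positivity of $\Leb(\phi_\om)$ follows from $\inf\phi_\om>0$. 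Item (4) is \cite[Theorem~B]{AFGV} rewritten with $Q_{\om,n}$. The only care needed is that their random constant may be stated through a tempered function times $\|u\|_{\BV}$, which we absorb into $D_u(\om)$.

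\textbf{Step 3: item (3), the only genuine computation.} The measure $\eta_\om$ defined by the displayed formula is a probability measure, and it is well defined because $\inf\phi_\om>0$ and $\Leb(X_{\om,0})>0$. To verify $\eta_\om(X_{\om,n})=\lm^n_\om$, apply \eqref{eq:0115b} with $u=\phi_\om$ and $v=\widehat I$:
\[
\Leb(\phi_\om\widehat X_{\om,n})=\Leb(\widehat X_{\sg^n\om,0}\,\cL_\om^n\phi_\om)=\lm_\om^n\,\Leb(\widehat X_{\sg^n\om,0}\,\phi_{\sg^n\om}).
\]
Here the last equality uses item (2) iterated. Dividing by $\Leb(\widehat X_{\om,0}\phi_\om)$ gives $\eta_\om(X_{\om,n})=\lm^n_\om\,\Leb(\widehat X_{\sg^n\om,0}\phi_{\sg^n\om})/\Leb(\widehat X_{\om,0}\phi_\om)$.

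For this to equal $\lm_\om^n$ we need $\Leb(\widehat X_{\om,0}\phi_\om)$ to be independent of $\om$, i.e.\ constant along orbits. This is the expected obstacle, and it is exactly where \cite[Remark~1.11.8]{AFGV} enters. My first attempt is to use the normalization of $\phi$ chosen there, which makes $\Leb(\widehat X_{\om,0}\phi_\om)$ constant (e.g.\ equal to $1$ after rescaling the $\lm_\om$ consistently). Failing that, I would use the conditional invariance $\eta_{\sg^n\om}(\cdot)=\eta_\om(T_\om^{-n}(\cdot)\cap X_{\om,n})/\eta_\om(X_{\om,n})$ proved in Theorem~A(4) together with the rate identity stated there. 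Absolute continuity and the support on $X_{\om,0}$ are immediate from the formula.
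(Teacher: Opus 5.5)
For items (1), (2) and (4) you do what the paper does. It gives no argument beyond observing that Conditions~1--2 are the standing hypotheses of \cite[Chapter~1]{AFGV} and quoting the results, so that part is fine. The gap is the last assertion of item (3). Your computation via \eqref{eq:0115b} is correct. With $c(\omega):=\Leb(\widehat X_{\omega,0}\phi_\omega)$ it yields
\[
\eta_\omega(X_{\omega,n})=\lambda_\omega^n\,\frac{c(\sigma^n\omega)}{c(\omega)},
\]
but neither of your ways of removing the factor $c(\sigma^n\omega)/c(\omega)$ works. First, the normalization is not at your disposal. The requirements that $\nu_{\omega,\infty}$ be a probability measure, that $\lambda_\omega:=\nu_{\sigma\omega,\infty}(\mathcal L_\omega\widehat I)$, and that $\nu_{\omega,\infty}(\phi_\omega)=1$ fix both $\lambda$ and $\phi$. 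If you set $\tilde\phi_\omega:=\phi_\omega/c(\omega)$, then $\mathcal L_\omega\tilde\phi_\omega=\tilde\lambda_\omega\tilde\phi_{\sigma\omega}$ with $\tilde\lambda_\omega=\lambda_\omega c(\sigma\omega)/c(\omega)$. You would then be proving the identity for a cohomologous cocycle, not for the $\lambda_\omega$ of item (1). Second, the fallback through conditional invariance is circular. It only gives $\eta_\omega(X_{\omega,n})=\prod_{j=0}^{n-1}\eta_{\sigma^j\omega}(X_{\sigma^j\omega,1})$, and your own formula with $n=1$ says $\eta_\omega(X_{\omega,1})=\lambda_\omega c(\sigma\omega)/c(\omega)$.

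In fact the missing step cannot be supplied. By \eqref{eq:0115a}, $c(\omega)=\Leb(\mathcal L_\omega\phi_\omega)=\lambda_\omega\Leb(\phi_{\sigma\omega})$. By ergodicity, the identity holds for all $n$ if and only if $c$ is $m$-a.e.\ constant. This is automatic in the deterministic case and holds in Proposition~\ref{ex:1b} ($\phi\equiv1$, $\Leb(X_{\omega,0})=3/4$), but it fails in general. Take $\Omega=\{0,1\}^{\mathbb Z}$ with the Bernoulli$(\frac12,\frac12)$ measure, $T_\omega(x)=4x$ mod $1$, and $H_\omega=I_0$ if $\omega_0=0$, $H_\omega=I_0\cup I_1$ if $\omega_0=1$.
\begin{itemize}
\item Conditions~1--4 hold exactly as in Section~\ref{s:ex1b}, with $\theta=1/3$ and $\kappa=1/2$.
\item The same variation argument forces $\phi_\omega\equiv1$.
\item Since $\mathcal L_\omega\widehat I\equiv\Leb(X_{\omega,0})$, we get $\lambda_\omega=\Leb(X_{\omega,0})$.
\item But $\Leb(X_{\omega,1})=\Leb(X_{\omega,0})\Leb(X_{\sigma\omega,0})$, hence $\eta_\omega(X_{\omega,1})=\lambda_{\sigma\omega}\ne\lambda_\omega$ whenever $\omega_0\ne\omega_1$, a set of measure $1/2$.
\end{itemize}
So what you can actually prove, and should state, is the displayed formula with the correction factor. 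This suffices for the paper's later use in \eqref{eq:Lest0116b}, since under Condition~4 it gives $\Leb(\phi_\omega\widehat X_{\omega,m})=\lambda_\omega^m c(\sigma^m\omega)\le C_\phi\lambda_\omega^m$.
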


Using the conformal measure $\nu_\infty$ from Theorem~\ref{lem0916}(1), we now refine the surviving partition elements into good and bad ones and state the additional geometric assumption needed later.
Recall 
  $\mathcal{P}_{\omega,*}^{(n)}$ from \eqref{eq:0315}.
 We then introduce the following subcollection:
\[
  \mathcal{P}_{\omega,b}^{(n)} := 
    \Bigl\{\, P \in \widehat{\mathcal{P}}_\omega^{(n)} : P \subseteq X_{\omega,n-1} 
        \ \text{and}\ \nu_{\omega, \infty}(P)= 0 \,\Bigr\}, 
  \]
\[
  \mathcal{P}_{\omega,g}^{(n)} := 
    \Bigl\{\, P \in \widehat{\mathcal{P}}_\omega^{(n)} : P \subseteq X_{\omega,n-1} 
       \ \text{and}\ \nu_{\omega, \infty}(P) > 0 \,\Bigr\}.
  \]
 Let $\delta_{\omega, n}:=\min_{P\in \mathcal{P}_{\omega,g}^{(n)}}\nu_{\omega, \infty}(P).$ 
We assume the following conditions on our random dynamical systems.
\begin{condi3*}
$\empty$
\begin{itemize}
\item There exists $0<\theta< 1$ and a constant $C_{\theta}>0$ such that for $m$-almost every $\omega\in 
 \Omega,$ for any $n\in \mathbb N,$ 
\begin{align*}(9+16\xi_{\omega}^{(n)})||(T_{\omega}^{n})^{\prime}||_{\infty}^{-1}\le C_{\theta}\theta^n\ \text{and} \ \esssup_{\omega\in \Omega}\frac{\theta}{\lambda_{\omega}} <1 
\end{align*}
where $\lambda_{\omega}$ is a positive number given in Theorem~\ref{lem0916}(1).
\item For any $n\in \mathbb N,$
\begin{align*}
\esssup_{\omega\in \Omega}(2\xi_{\omega}^{(n)}+1)\delta_{\omega, n}^{-1}<\infty.
\end{align*}
\end{itemize}
\end{condi3*}
\begin{rem}
Condition 3 is primarily technical in nature. 
However, we note that in the deterministic case, the second item automatically holds, and the first item holds when the hole size is sufficiently small
 (see \cite[Section 7]{LM}). 
\end{rem}

We now state our final standing assumption, which
strengthens the conclusions of Theorem~\ref{lem0916}.

\begin{condi4*}$\empty$
\begin{itemize}
\item 
There exists a constant $C_\phi \ge 1$ such that for $m$-almost every $\omega\in\Omega$ and every $x\in I$,
\[
C_\phi^{-1}\le \phi_\omega(x)\le C_\phi.
\]
\item There exist constants $D>0$ and $\kappa\in(0,1)$ such that for $m$-almost every $\omega\in\Omega$, every $n\in\mathbb N$ and every $u\in\mathrm{BV}(I)$, 
\[
\|Q_{\omega,n}(u)\|_\infty \le D\|u\|_{\mathrm{BV}}\kappa^n.
\]
\end{itemize}
\end{condi4*}

\begin{rem}
In the deterministic setting, Condition~4 is automatically satisfied by Theorem \ref{lem0916} above, with the exception of the requirement $C_\phi^{-1}\le \phi(x)\le C_\phi$ for every $x\in I$.
The latter is verified by \cite[Theorem~C and Section~7]{LM} for sufficiently small holes.
Consequently, there are abundant deterministic examples satisfying all the conditions of this paper (note that our results are new even in the deterministic case). 
For genuinely non-deterministic perturbative examples satisfying Conditions~1,~2, and~4, we refer the reader to \cite[Section~2.7]{AFGV}.
\end{rem}

\subsection{Non-deterministic example}\label{ss:nondet}
We conclude this section with a simple
non-deterministic example satisfying Conditions~1--4.

Let $\Omega=\{0,1,2,3\}^{\mathbb Z}$ be equipped with the product probability
measure, and let $\sigma:\Omega\to\Omega$ be the left shift. For
$\omega=(\omega_n)_{n\in\mathbb Z}\in\Omega$, define
\[T_\omega(x)=4x \ \text{mod}\ 1, \quad x\in I=[0,1).\]
The map is deterministic, while the randomness is introduced
through the hole:
For $j=0,1,2, 3$, let
\[I_j=[j/4,(j+1)/4).\]
For $\omega=(\omega_n)_{n\in\mathbb Z}\in\Omega$, let
\[\mathcal P_\omega=\{I_0,I_1,I_2,I_3\},\qquad
H_\omega=I_{\omega_0}.
\]
\begin{pro}\label{ex:1b}
The above 
random
open dynamical system 
satisfies Conditions~1--4.
\end{pro}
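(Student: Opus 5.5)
Conditions~1 and~2 are direct computations for the doubling-type map $T_\omega(x)=4x \bmod 1$. Conditions~3 and~4 follow once we identify $\lambda_\omega$, $\phi_\omega$ and $\nu_{\omega,\infty}$ explicitly. The key observation is that, because $T_\omega$ is the full-branch linear map with slope $4$ and the hole $H_\omega=I_{\omega_0}$ is a single branch domain, everything is computable by hand.

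\textbf{Condition~1.} The partition $\mathcal P_\omega^{(n)}$ consists of the $4^n$ intervals $[k4^{-n},(k+1)4^{-n})$. Hence:
\begin{itemize}
\item $\#\mathcal P_\omega=4$;
\item covering holds with $M(n)=n$;
\item $T_\omega''=0$, so regularity holds with $K=1$;
\item $\kappa_1=\kappa_2=4$ and $n_1=1$;
\item $\epsilon_n=4^{-n}$.
\end{itemize}
Measurability is clear because $\omega\mapsto H_\omega$ depends only on $\omega_0$.

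\textbf{Condition~2.} We have $h_\omega=1$. Any branch $I_j$ with $j\ne\omega_0$ maps onto $I$ and avoids the hole. For the growth condition, $F^{(1)}_\omega=4$. Since $X_{\omega,0}$ is a union of three full branches, every element of $\mathcal P^{(1)}_{\omega,*}$ is full, so $\xi^{(1)}_\omega=0$. The condition then reads $\log 4>\log 1+\log 2$, which is true.

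\textbf{Explicit objects.} $X_{\omega,n-1}$ is a union of $3^n$ intervals of $\mathcal P_\omega^{(n)}$, each mapped by $T^n_\omega$ onto $I$, so $\xi^{(n)}_\omega=0$. For $u\equiv1$,
\[
\mathcal L_\omega \widehat I=\tfrac14\cdot 3=\tfrac34 .
\]
Hence $\phi_\omega\equiv1$ and $\lambda_\omega=3/4$, provided we check that $\nu_{\omega,\infty}$ is the conformal measure of Theorem~\ref{lem0916}.

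We construct $\nu_{\omega,\infty}$ as follows. It gives mass $3^{-n}$ to each cylinder of $\mathcal P^{(n)}_\omega$ contained in $X_{\omega,n-1}$, and mass $0$ to all other cylinders. This is consistent because each surviving $n$-cylinder has exactly $3$ surviving children. By Kolmogorov extension it defines a probability measure supported on $X_{\omega,\infty}$.

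Conformality, $\nu_{\sigma\omega,\infty}(\mathcal L_\omega u)=\tfrac34\,\nu_{\omega,\infty}(u)$, is checked on indicators of cylinders. If $C\subset I_j$ with $j\ne\omega_0$ is a surviving $n$-cylinder, then $\mathcal L_\omega\widehat C=\tfrac14\widehat{T_\omega C}$, and $T_\omega C$ is a surviving $(n-1)$-cylinder for $\sigma\omega$. Thus
\[
\nu_{\sigma\omega,\infty}(\mathcal L_\omega \widehat C)=\tfrac14\cdot 3^{-(n-1)}=\tfrac34\cdot 3^{-n},
\]
as required. Uniqueness in Theorem~\ref{lem0916}(1) then identifies this measure. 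Normalization $\nu(\phi)=1$ holds.

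\textbf{Condition~3.} The first item reduces to
\[
9\cdot 4^{-n}\le C_\theta\theta^n,\qquad \theta/\lambda_\omega<1.
\]
Taking $\theta=1/2$ and $\theta/\lambda_\omega=2/3$ works. For the second item, good elements of $\mathcal P^{(n)}_{\omega,g}$ are exactly the surviving $n$-cylinders, each of $\nu$-mass $3^{-n}$. So $\delta_{\omega,n}=3^{-n}$, uniformly in $\omega$, which gives the second item.

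\textbf{Condition~4.} $C_\phi=1$. The main obstacle is the uniform exponential bound on $Q_{\omega,n}$. I would prove it directly. For a surviving $n$-cylinder $C$ with inverse branch $g_C$,
\[
(\lambda^n_\omega)^{-1}\mathcal L^n_\omega u(x)=(4/3)^n\,4^{-n}\sum_{C} u(g_C x)=\frac{1}{3^n}\sum_{C\text{ surviving}} u(g_C x).
\]
This is a $\nu_{\omega,\infty}$-weighted Riemann sum of $u$. Comparing it with $\nu_{\omega,\infty}(u)=\sum_C \nu_{\omega,\infty}(u\widehat C)$, each term differs by at most $3^{-n}\var_C(u)$. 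Summing over the disjoint cylinders gives
\[
\|Q_{\omega,n}u\|_\infty\le 3^{-n}\var(u).
\]
So we may take $D=1$ and $\kappa=1/3$.

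The one delicate point is the convention $I=[0,1)$ versus $[0,1]$ and endpoint values, which affects $\var$ only at finitely many points. I would handle it by using left-closed cylinders throughout.
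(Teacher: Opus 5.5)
Your treatment of Conditions~1--3 and your bound on $Q_{\omega,n}$ follow the paper's proof. You use the same explicit $\nu_{\omega,\infty}$ (mass $3^{-n}$ on each surviving $n$-cylinder) and the same values $\lambda_\omega=3/4$, $\xi^{(n)}_\omega=0$, $\delta_{\omega,n}=3^{-n}$, $\theta=1/2$, $D=1$, $\kappa=1/3$. For the decay estimate you compare each branch term with $\nu_{\omega,\infty}(u\widehat C)$ using $\nu_{\omega,\infty}(C)=3^{-n}$. The paper instead bounds $\mathrm{osc}(\widetilde{\mathcal L}^n_\omega u)\le 3^{-n}\var(u)$ and uses conformality to place $\nu_{\omega,\infty}(u)$ between the infimum and supremum of $\widetilde{\mathcal L}^n_\omega u$. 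The two estimates are equivalent.

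There is one genuine gap: the step ``Hence $\phi_\omega\equiv 1$.'' You have only shown that the constant function $1$ satisfies $\mathcal L_\omega 1=\lambda_\omega\cdot 1$ and $\nu_{\omega,\infty}(1)=1$. Condition~4, however, concerns the particular density $\phi$ of Theorem~\ref{lem0916}(2). Unlike $\nu_{\omega,\infty}$ in part~(1), that theorem asserts only existence of $\phi$, not uniqueness. The normalization cannot pin $\phi$ down either. It only sees $\phi_\omega$ on $X_{\omega,\infty}$, which here is a Lebesgue-null Cantor set, since $\mathrm{Leb}(X_{\omega,n})=(3/4)^{n+1}$. Yet Condition~4 needs $C_\phi^{-1}\le\phi_\omega(x)\le C_\phi$ at every $x\in I$.

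The missing ingredient is the temperedness of $\omega\mapsto\|\phi_\omega\|_{\mathrm{BV}}$. The paper uses it as follows. From $\var(\phi_{\sigma\omega})=\var(\widetilde{\mathcal L}_\omega\phi_\omega)\le\frac13\var(\phi_\omega)$ it gets $\var(\phi_\omega)\le 3^{-n}\var(\phi_{\sigma^{-n}\omega})\to 0$. So $\phi_\omega$ is constant, and it equals $1$ by normalization.

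In your setup the repair is immediate, because your estimate $\|\widetilde{\mathcal L}^n_\omega u-\nu_{\omega,\infty}(u)\|_\infty\le 3^{-n}\var(u)$ never uses $\phi$. Apply it at $\sigma^{-n}\omega$ with $u=\phi_{\sigma^{-n}\omega}$, and use $\widetilde{\mathcal L}^n_{\sigma^{-n}\omega}\phi_{\sigma^{-n}\omega}=\phi_\omega$ together with $\nu_{\sigma^{-n}\omega,\infty}(\phi_{\sigma^{-n}\omega})=1$. This gives $\|\phi_\omega-1\|_\infty\le 3^{-n}\var(\phi_{\sigma^{-n}\omega})\to 0$ by temperedness. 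So prove that estimate first, deduce $\phi\equiv 1$ from it, and only then read it as the bound on $Q_{\omega,n}$.

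Incidentally, $\lambda_\omega=3/4$ needs no identification of $\nu$ at all. Since $\mathcal L_\omega\widehat I\equiv 3/4$ and $\nu_{\sigma\omega,\infty}$ is a probability measure, $\lambda_\omega=\nu_{\sigma\omega,\infty}(\mathcal L_\omega\widehat I)=3/4$ directly.
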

We postpone the proof of Proposition \ref{ex:1b} to Section \ref{s:ex1b}.

\section{Main results}
Building on the setup of Section 2, where Conditions 1–4 are introduced, we now fix the base family of probability spaces in the random environment and specify the sequence of random variables that form the focus of our main results.
Let $\{\zeta_{\omega}\}_{\omega\in\Omega}$ be a random probability measure, that is, a family of Borel probability measure on $I$ such that
for every $A\in\sB _I$, the map $\Om\ni\om\longmapsto\zeta_\om(A)\in [0,1]$ is measurable.
To state the main results, we restrict the initial random probability measures to those arising from admissible random densities of bounded variation, given by
\begin{multline*}
\mathrm{BV}_{\Omega}(I)_1^+ := \big\{ \psi \in \mathrm{BV}_{\Omega}(I) : 
   \psi_\omega \geq 0, \,
   \nu_{\omega,\infty}(\psi_\omega) = 1, \, 
   \text{$\omega\mapsto ||\psi_\omega||_{\mathrm{BV}}$ is tempered},\\
   \text{and }  \Leb(\psi_\omega) >0 
   \ \text{for } m\text{-a.e. } \omega \in \Omega \big\}.
\end{multline*}
For $\psi\in \mathrm{BV}_{\Omega}(I)_1^+$, we say that
a random probability measure $\zeta$ given by
\[
    \zeta_\omega(u) := 
    \frac{\Leb\left( \widehat X_{\omega, 0} \psi_\omega u \right)}
         {\Leb\left( \widehat X_{\omega, 0} \psi_\omega \right)}
         \qquad (\omega\in\Omega, \; u \in \mathrm{BV}(I))
\]
is a \emph{random probability measure associated with $\psi$.}
An important example is the random conditionally invariant probability measure $\eta$ from Theorem~\ref{lem0916}(3), which is associated with the random density $\phi\in \mathrm{BV}_{\Omega}(I)_1^+$ from Theorem~\ref{lem0916}(2).

Fix a random probability measure $\zeta$ associated with $\psi\in \mathrm{BV}_{\Omega}(I)_1^+$.
We assume that
\[
\zeta_{\omega}(X_{\omega, n})>0\quad \text{for all $n\in \mathbb N$ and $m$-a.e.~$\omega\in\Omega$}.
\]
The conditional probability measure of $\zeta_\omega$ given $X_{\omega, n}$ is then defined by
\begin{align}
\label{measzeta}
\zeta_{\omega, n}(A)=\frac{\zeta_{\omega}(A\cap X_{\omega, n})}{\zeta_{\omega}(X_{\omega, n})}
\end{align}
for each $\omega\in\Omega$, $n\in \mathbb N$ and measurable set $A\subset I$.

Let $f$ be a random observable satisfying that 
\[
L:=2\esssup_{\omega\in\Omega}||f_\omega||_\alpha <\infty
\]
with some $\alpha \in (0,1]$, where $|| h ||_\alpha$ denotes the $\alpha$-H\"older norm of a function $h : I \to \RR$,
$$
\|h\|_\alpha = \|h\|_\infty +
\sup_{x\neq y}\frac{|h(x)-h(y)|}{|x-y|^\alpha}.
$$
Our interest lies in establishing conditional limit theorems for the sequence of random variables $\{f_{\sigma^n\omega} \circ T^n_\omega\}_{0\le n<N}$, conditioned on $X_{\omega,N}$.
Accordingly, the main object of this paper is the sequence of \emph{centered}\footnote{Namely, $\zeta_{\omega,N}(\overline{f_{\omega,N,n}})=0$.} random variables
\[
\overline{f_{\omega,N,n}}(x)
:=f_{\sigma^n\omega} \circ T^n_\omega (x) - \zeta_{\omega,N}( f_{\sigma^n\omega} \circ T^n_\omega ) \quad (0\le n <N),
\]
defined on the probability space $(X_{\omega,N},\mathscr B_I\vert_{X_{\omega,N}}, \zeta_{\omega,N})$, for $\omega\in\Omega$ and $0\le n<N$, where $\mathscr B_I\vert_{X_{\omega,N}}$ is the $\sigma$-field $\mathscr B_I$ restricted to $X_{\omega, N}$.
Notice that $|\overline{f_{\omega,N,n}}(x)|\le L$ for every $N\ge 1$, $0\le n<N$, $x\in I$ and $m$-a.e.~$\omega\in\Omega$.

	Given a function $g : [-L,L]^k \to \RR$, where $k \geq 1$, we define 
\[
\text{Lip}(g) = \max_{1 \leq i \leq k} \sup_{x \in [-L,L]^k} 
\sup_{a \neq a'} 
\frac{|g(x(a/i)) - g(x(a'/i))|}{|a - a'|}.
\]
Here, $x(a/i) \in [-L, L]^k$ denotes the vector obtained from $x \in [-L,L]^k$ by replacing the $i$th component $x_i$ with $a \in [-L, L]$. 
We say that $g$ is separately Lipschitz continuous if $\text{Lip}(g) < \infty$, and we set 
 $\|g\|_{\mathrm{Lip}} = \|g\|_\infty + \text{Lip}(g)$.

The following theorem establishes a functional correlation bound for $\{\overline{f_{\omega,N,n}}\}_{0\le n<N}$ with an exponentially decaying rate function (see Section \ref{s:abst} for its precise definition), \emph{uniformly in $(\omega,N)$}, and constitutes the first main result of this paper.

\begin{thmx}
\label{FCB}
Suppose that Conditions 1-4 hold. Then, there exist $\Omega_1 \subset \Omega$ with 
$m(\Omega_1) = 1$ and $0< r< 1$ such that the following holds for any $\omega \in \Omega_1$.
Let $k\ge2$ and $0\le n_1\le\cdots\le n_k<N$ be integers. Assume that $\{1,\dots,k\}$ is partitioned into consecutive blocks
$$
\{l_{i}+1,\dots,l_{i+1} \}, \quad 0\le i\le p,
$$
where
$$
0=l_0<l_1<\cdots<l_p<l_{p+1}=k \quad \text{and} \quad 
\text{$n_j<n_{j+1}$ if $l_i+1\le j<l_{i+1}$, $0\le i\le p$.}
$$
Then, for any separately Lipschitz continuous function $g:[-L,L]^k\to\mathbb R$, we have
\begin{align*}\label{corre}
&\left|\int G_{\omega ,N}(x,..., x)\ d \zeta_{\omega, N}(x)-\int\cdots \int G_{\omega ,N} (x_1,...,x_{p+1})\ d\zeta_{\omega, N}(x_1)\cdots d\zeta_{\omega, N}(x_{p+1})\right|\\
&\le  \mathbf{C}_\omega L \Vert g \Vert_{ \mathrm{Lip} } \sum_{i=1}^{p}r^{n_{l_i+1}-n_{l_i}},
\end{align*}
for some 
random variable $\mathbf{C}_\omega > 0$ independent of 
$N$, $f$, $L$, 
$n_j$, $l_i$ and $p$,
where
\[
G_{\omega ,N}(x_1,x_2,..., x_{p+1})
:=g\big(
\overline{\bm f_{1,\omega,N}}(x_1),\overline{\bm f_{2,\omega,N}}(x_2),\cdots,\overline{\bm f_{p+1,\omega,N}}(x_{p+1})
\big)
\]
with $\overline{\bm f_{i,\omega,N}}(x):=(\overline{f_{\omega,N,n_{l_{i-1}+1}}}(x),\dots,\overline{f_{\omega,N,n_{l_i}}}(x))$.
\end{thmx}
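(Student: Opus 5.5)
Following the strategy of \cite{L17,LS20} for closed systems, we reduce the functional correlation bound to a \emph{two-block} decorrelation estimate and obtain the general case by a telescoping argument over the $p$ gaps. We introduce intermediate integrals $I_q$, $0\le q\le p$, in which the first $q$ blocks are evaluated at a common point and the remaining blocks at independent copies. Then $I_p$ is the left-hand integral and $I_0$ is the fully independent one. It therefore suffices to show
\[
|I_q-I_{q-1}|\le \mathbf C_\omega L\|g\|_{\Lip} r^{n_{l_q+1}-n_{l_q}}.
\]
After freezing the independent variables by Fubini, this reduces to the following claim. For functions $F(x)=\Phi\big(f_{\sigma^{n_1}\omega}\circ T^{n_1}_\omega(x),\dots,f_{\sigma^{n_a}\omega}\circ T^{n_a}_\omega(x)\big)$ and $G(x)=\Psi\big(f_{\sigma^{m_1}\omega}\circ T^{m_1}_\omega(x),\dots\big)$, with $n_a=n<m:=m_1$, we need
\[
|\zeta_{\omega,N}(FG)-\zeta_{\omega,N}(F)\zeta_{\omega,N}(G)|\le \mathbf C_\omega L\|g\|_{\Lip} r^{m-n}.
\]
The centering constants appearing in $\overline{f_{\omega,N,n}}$ are deterministic, so they can be absorbed into $\Phi,\Psi$.

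\textbf{Step 1: transfer operator representation.} By \eqref{eq:0115b} and $\zeta_\omega\propto\widehat X_{\omega,0}\psi_\omega\,\Leb$, the $G$ factor is a function of $T^m_\omega x$, so it can be written as $\tilde G\circ T^m_\omega$. We write $\widehat X_{\omega,N}=\widehat X_{\omega,m-1}\cdot \widehat X_{\sigma^m\omega,N-m}\circ T^m_\omega$. This gives
\[
\Leb(\psi_\omega\widehat X_{\omega,N}FG)=\Leb\big(\mathcal L^m_\omega(\psi_\omega F)\,\widehat X_{\sigma^m\omega,N-m}\tilde G\big).
\]
Next we write $\mathcal L^m_\omega=\mathcal L^{m-n}_{\sigma^n\omega}\mathcal L^n_\omega$ and $\mathcal L^n_\omega(\psi_\omega F)=\mathcal L^n_\omega(\psi_\omega)\cdot(\text{function of }\Phi)$, after passing $F$ through $T^n$ branch by branch. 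Applying Condition~4 then gives
\[
(\lambda^{m-n}_{\sigma^n\omega})^{-1}\mathcal L^{m-n}_{\sigma^n\omega}h_n=\nu_{\sigma^n\omega,\infty}(h_n)\,\phi_{\sigma^m\omega}+O\big(D\|h_n\|_{\BV}\kappa^{m-n}\big)
\]
in sup norm, where $h_n:=(\lambda^n_\omega)^{-1}\mathcal L^n_\omega(\psi_\omega F)$. The same formula with $F\equiv1$, $G\equiv1$ handles the normalization $\zeta_\omega(X_{\omega,N})$ and each marginal. The main terms then factor as
\[
\nu_{\sigma^n\omega,\infty}(h_n)\cdot\Leb\big(\phi_{\sigma^m\omega}\widehat X_{\sigma^m\omega,N-m}\tilde G\big).
\]
This factorization cancels in $\zeta_{\omega,N}(FG)-\zeta_{\omega,N}(F)\zeta_{\omega,N}(G)$. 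The remaining errors are relative errors of size $\|h_n\|_{\BV}\kappa^{m-n}$, once we divide by
\[
\Leb\big(\phi_{\sigma^m\omega}\widehat X_{\sigma^m\omega,N-m}\big)\ge C_\phi^{-1}\lambda^{N-m}_{\sigma^m\omega}\Big/\sup\phi
\]
from Theorem~\ref{lem0916}(3). The lower bound $\phi\ge C_\phi^{-1}$ in Condition~4 is what keeps these denominators comparable to $\lambda^{N-m}$ uniformly in $N$.

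\textbf{Step 2 (main obstacle): uniform BV bound for $h_n$.} The function $F$ is not BV. It is only piecewise Hölder along the partition $\mathcal P^{(n)}_\omega$, with Lipschitz dependence through $\Phi$. So we must show $\|h_n\|_{\BV}\le C_\omega\|\Phi\|_{\Lip}$ with $C_\omega$ independent of $n$ and of the number of factors. We propose to prove a Lasota--Yorke-type inequality adapted to the FCB, by induction over the times $n_1<\dots<n_a$ inside the block. Between consecutive times, the operator $(\lambda)^{-1}\mathcal L$ contracts variation by a factor $C_\theta(\theta/\lambda)^{j}$; this uses the first item of Condition~3 and counts the contiguous non-full branches $\xi^{(n)}_\omega$ created by the hole. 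Each new factor $f\circ T^{n_i}$ adds variation at most $\mathrm{Lip}(\Phi)\cdot L$ times the sup of the current density, since $\mathcal L^{n_i}$ turns $f\circ T^{n_i}$ into $f$. Because $\operatorname{ess\,sup}\theta/\lambda<1$, the contributions form a geometric series and the bound is uniform. The sup and $L^1$ parts are controlled by conformality, $\nu_{\sigma^n\omega,\infty}(h_n)\le\|\Phi\|_\infty$, together with \eqref{eq:BV-L1}. The second item of Condition~3, the bound on $\delta_{\omega,n}^{-1}$, is used to compare $\nu_\infty$-mass with sup norm on good partition elements, while bad elements carry zero $\nu_\infty$ mass. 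Finally, the Hölder observable is handled by replacing $f$ with its piecewise-constant approximation on $\mathcal P^{(k)}$ at cost $\kappa_1^{-\alpha k}$, and optimizing $k\sim (m-n)/2$. This yields the rate $r=\max(\kappa,\theta/\lambda,\kappa_1^{-\alpha/2})^{1/2}<1$, with $\mathbf C_\omega$ built from the tempered quantity $\|\psi_\omega\|_{\BV}$ and $\Omega_1$ the full-measure set where all of the above holds.
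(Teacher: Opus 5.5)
Your proposal has a genuine gap, both in the reduction step and in the key estimate.

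\textbf{The reduction and the BV bound.} After the telescoping and Fubini, what remains is still $\int g(u(x),v(x))\,d\zeta_{\omega,N}(x)-\iint g(u(x),v(y))\,d\zeta_{\omega,N}(x)\,d\zeta_{\omega,N}(y)$. Here $u$ collects the earlier coordinates, $v$ the later ones, and $g$ is an arbitrary separately Lipschitz function coupling them. This is not a covariance $\zeta_{\omega,N}(FG)-\zeta_{\omega,N}(F)\zeta_{\omega,N}(G)$ of a product $\Phi(u)\Psi(v)$. A naive approximation of $g$ by sums of products in $u$-space costs a factor exponential in the number $l$ of coordinates, which destroys uniformity in $k$.

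Moreover, the claimed uniform bound $\|h_n\|_{\BV}\le C_\omega\|\Phi\|_{\Lip}$ for $h_n=(\lambda^n_\omega)^{-1}\mathcal L^n_\omega(\psi_\omega F)$ is false. Take $a=1$ and $\Phi(u)=u$. Then $h_n=f_{\sigma^n\omega}\cdot(\lambda^n_\omega)^{-1}\mathcal L^n_\omega\psi_\omega$. By Condition~4 the second factor is $\ge C_\phi^{-1}/2$ for large $n$, so $h_n\notin\BV$ whenever the $\alpha$-H\"older function $f_{\sigma^n\omega}$ has infinite variation.

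Several other steps also fail:
\begin{itemize}
\item ``Each new factor adds variation at most $\mathrm{Lip}(\Phi)L$'' presupposes $\var(f)\lesssim L$, which is not available.
\item $\mathcal L^n_\omega(\psi_\omega F)$ does not factor as $\mathcal L^n_\omega(\psi_\omega)$ times a function of $\Phi$, since $f\circ T^{n_i}_\omega$ with $n_i<n$ is not a function of $T^n_\omega x$.
\item Your closing piecewise-constant approximation at scale $k\sim(m-n)/2$ points the right way. But after it, the variation of $h_n$ at time $n$ can grow exponentially in $k$, so the scheme ``BV bound at time $n$, then spectral gap over $m-n$ steps'' does not close.
\end{itemize}

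\textbf{The missing idea (the paper's route).} Work at the midpoint $n_\ast=n_l+\lceil (n_{l+1}-n_l)/2\rceil$, using the partition $\mathcal Q_\omega$ into cells $J$ of the hole-refined monotonicity partitions up to time $n_\ast$. Since $T^{n_\ast-n_j}_{\sigma^{n_j}\omega}$ maps $T^{n_j}_\omega J$ diffeomorphically onto $T^{n_\ast}_\omega J$, each first-block coordinate varies on $J$ by $O(L\kappa_1^{-\alpha(n_\ast-n_j)})$. Summing the geometric series over $j$ lets you replace $G(x,y)$ by $\sum_J\widehat J(x)\,G(c_J,y)$ at cost $O(L\|g\|_{\Lip}\rho^{n_{l+1}-n_l})$, uniformly in $l$ (Lemma~\ref{lem:rand-claim}). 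This is the correct sum-of-products reduction.

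Because each cell carries its own coefficient $G(c_J,\cdot)$, the spectral gap of Condition~4 must be applied cell by cell over the remaining $n_{l+1}-n_\ast$ steps. So you need not one BV norm but $\sum_{J\in\mathcal Q_\omega}\|\mathcal L^{n_\ast}_\omega(\psi_\omega\widehat J)\|_{\BV}\lesssim\|\psi_\omega\|_{\BV}\lambda^{n_\ast}_\omega$ (Proposition~\ref{proesti}). This comes from the partition-level Lasota--Yorke inequality $\sum_{J\in\widehat{\mathcal P}^{(n)}_\omega}\var(\mathcal L^n_\omega(\widehat Ju))\le C_\theta\theta^n\var(u)+K_n\nu_{\omega,\infty}(|u|)$, iterated in blocks of length $n_0$ with $\Theta<1$:
\begin{itemize}
\item the second item of Condition~3 makes $K_n$ finite;
\item conformality turns the $\nu_\infty$-terms into powers of $\lambda_\omega$;
\item the $L^1$ part is controlled through $\eta_\omega(X_{\omega,n})=\lambda^n_\omega$.
\end{itemize}
Finally, you must separately replace $\zeta_{\omega,N}(J)$ by $\nu_{\omega,\infty}(\psi_\omega\widehat J)$, which uses the same cell-wise BV sum.
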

One can find an explicit form of $\mathbf C_\omega$ in \eqref{eq:0314c1}.

\medskip

As previously mentioned, the uniform functional correlation bound in Theorem \ref{FCB} may lead to various conditional limit theorems for $\{f_{\sigma^n\omega} \circ T^n_\omega\}_{0\le n<N}$ conditioned by $X_{\omega,N}$.
In this paper, we apply the bound to derive several conditional normal approximations.

For Theorem~\ref{CLT} below, we specialize to the case $\zeta_\omega = \eta_\omega$, the random conditionally invariant probability measure from Theorem~\ref{lem0916}(3).
The specialization is also essential for our proof, since the variance growth estimate (Proposition~\ref{prop:0625b}) relies on the conditional invariance property $\eta_{\omega,N}(g\circ T^n_\omega) = \eta_{\sigma^n\omega, N-n}(g)$, which holds for $\eta$ but not for general $\zeta$.

We assume that $f$ belongs to the class $\mathrm{BV}_\Omega(I)$.
We further assume that $f$ is not a coboundary (with respect to $\eta_\infty$), that is,
there does not exist $\tilde f\in L^2(\eta_{\infty})$ 
such that $f_\omega (x)=\tilde f _\omega (x)- \tilde f_{\sigma\omega}\circ T_\omega (x) +\eta_{\omega,\infty}(f_\omega)$ for $m$-a.e.~$\omega\in\Omega$ and $\eta_{\omega,\infty}$-a.e.~$x\in X_{\omega,\infty}$.
Then, by Proposition \ref{prop:0625b}, there exist a constant $C=C_f>0$ and an integer $N_\omega=N_{\omega,f}$ such that
\[
\sigma_{\omega,N} :=\sqrt{\eta_{\omega,N}\left[\left(\sum_{n=0}^{N-1}\overline{f_{\omega,N,n}}\right)^2\right]} 
\]
is larger than $C\sqrt N$ for $m$-a.e.~$\omega\in\Omega$ and all $N\ge N_\omega$.
We set 
\[
W_{\omega,N} :=\sigma_{\omega,N}^{-1}\sum_{n=0}^{N-1}\overline{f_{\omega,N,n}}.
\]
We consider $W_{\omega,N}$ as a random variable on $(X_{\omega,N},\mathscr B_I\vert_{X_{\omega,N}}, \eta_{\omega,N})$.

For a random variable $X$, we write $\mathcal L(X)$ for its law.
The Wasserstein distance $d_{\cW} ( \mu, \nu ) $ and 
the Kolmogorov distance $d_{\cK} ( \mu, \nu ) $ 
between probability
measures $\mu$ and $\nu$ on $\mathbb R$ are defined by
$$
d_\ast(\mu,\nu)
= \sup_{h\in\ast}\left| \int h\, d\mu - \int h\, d\nu \right|,
$$
where
$$
\mathcal W = \{h:\mathbb R\to\mathbb R:\ |h(x)-h(y)|\le |x-y|
\text{ for all } x,y\in\mathbb R\},\quad
\mathcal K = \{\mathbf 1_{(-\infty,x]} : x\in\mathbb R\}.
$$
With a slight abuse of notation, we write
$$
d_\ast(X,Y) = d_\ast\bigl(\mathcal L(X), \mathcal L(Y)\bigr)
$$
for the distance between the laws of two random variables $X$ and $Y$.

Furthermore, we set
\begin{align*}
	\sigma^2_{\omega, N, n} = \sum_{i=0}^{n-1} \sum_{j=0}^{N-1} \eta_{\omega, N}( 
	\overline{f_{\omega,N,i}} \cdot \overline{f_{\omega,N,j}}
	), \quad \mathscr W_{\omega,N}(t) = \sigma_{\omega,N}^{-1} \sum_{n = 0}^{  N - 1 } 
	J_{  \sigma_{\omega, N, n}^2 / \sigma_{\omega,N}^2  }(t)
	\overline{f_{\omega,N,n}}
\end{align*}
for each $t \in [0,1 ]$, where 
\begin{align*}
	J_{ \alpha }(t) = \begin{cases}
		1, & \text{if $t \ge \alpha$}, \\ 
		0, & \text{if $t < \alpha$.}
	\end{cases}
\end{align*}
Denote by $D$ the space of all c\`{a}dl\`{a}g functions $w : [0,1] \to \mathbb{R}$, equipped with the sup norm $\|w\|_\infty = \sup_{t \in [0,1]} |w(t)|$. 
We write $\mathscr M_0$ for a class of functions $h : D \to \mathbb{R}$ introduced in \cite{B90}; its precise definition is given in Section \ref{s:abstr}.  
Roughly speaking, $\mathscr M_0$ consists of twice Fr\'echet differentiable functions whose second derivative satisfies a suitable smoothness condition (see \eqref{eq:smooth}). The space $\mathscr M_0$ is equipped with a norm 
$\|h\|_{\mathscr M}$ whose definition is given in \eqref{eq:norm_m0}.
We remark that $\mathscr M_0$ is sufficiently large in the following sense: a sufficiently fast decay of
\begin{align}\label{eq:smooth_dist}
d_{\cS }(\mathscr W_{\omega,N},\mathscr Z)
:= \sup\Bigl\{\bigl|\textstyle\int h( \mathscr W_{\omega,N} ) \, d\zeta_{\omega, N} - 
E[ h( \mathscr Z ) ]
\bigr| : h\in \mathscr M_0,\ \|h\|_{\mathscr M}\le 1\Bigr\},
\end{align}
such as the one established in Theorem \ref{CLT}, implies that the law of $\mathscr W_{\omega,N}$ converges weakly to the law of $\mathscr Z$,
provided that $N^{-1} \sigma_{\omega, N}^2$ converges to a positive limit; see Theorem \ref{thm:clt_tikhomirov} for details.

We are now in a position to state our second main result, which concerns conditional normal approximations:
\begin{thmx}
\label{CLT}
Suppose that Conditions 1-4 hold and that 
$f \in BV_\Omega (I)$ is not a coboundary.
Then, there exists a positive random variable $\mathcal C$ such that for $m$-a.e.~$\omega\in\Omega$ and every $N\ge N_\omega$, with $N_\omega$ as in Proposition \ref{prop:0625b},
\begin{align*}
&d_{\mathcal W}(W_{\omega,N}, Z)\le \mathcal C_\omega N^{-\frac{1}{2}}, 
\\
&d_{\cK}( W_{\omega,N},Z ) \le \mathcal C_\omega  \log^2(N + 1) N^{-\frac12},
\\
&d_{\cS}(\mathscr W_{\omega,N},\mathscr Z)
\le \mathcal C_\omega N^{-\frac{1}{2}}.
\end{align*}
Here,  $Z$ is a standard normal random variable and $\mathscr Z$ is a standard Brownian motion.
\end{thmx}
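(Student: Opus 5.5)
Theorem~\ref{CLT} follows by feeding the uniform functional correlation bound of Theorem~\ref{FCB} (with $\zeta=\eta$) into abstract normal-approximation results, once we control the variance. We use Proposition~\ref{prop:0625b}: for $N\ge N_\omega$ one has $\sigma_{\omega,N}\ge C\sqrt N$. A matching upper bound $\sigma^2_{\omega,N}\le C' N$ comes from the case $k=2$, $p=1$, $g(a,b)=ab$ of Theorem~\ref{FCB}: it gives $|\eta_{\omega,N}(\overline{f_{\omega,N,i}}\,\overline{f_{\omega,N,j}})|\le \mathbf C_\omega L^2 r^{|i-j|}$ (the product of the separate means vanishes by centering), and this sum over $i,j$ is $O(N)$. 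Hence $\sigma_{\omega,N}\asymp\sqrt N$ with constants depending only on $\omega$ and $f$.

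\textbf{Wasserstein bound.} We apply the abstract theorem of \cite{L17,LS20} in the form used for systems satisfying an FCB. It states roughly that if a bounded, centered sequence $(X_n)_{0\le n<N}$ satisfies an FCB with summable rate $\rho(n)$, then $d_{\mathcal W}(\sigma^{-1}\sum X_n,Z)\le C\bigl(\sum_n \rho(n)\text{-moments}\bigr)N\sigma^{-3}$. The constants depend only on $L$, on $\sum_n n^2 r^n$-type quantities, and on $\mathbf C_\omega$. Take $X_n=\overline{f_{\omega,N,n}}$ on $(X_{\omega,N},\eta_{\omega,N})$ and $\rho(n)=\mathbf C_\omega r^n$. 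The bound is uniform in $N$ because $\mathbf C_\omega$ is independent of $N$, and $\sigma_{\omega,N}\asymp \sqrt N$ turns $N\sigma^{-3}$ into $N^{-1/2}$. The only check needed is that the abstract hypotheses (block structure, separately Lipschitz test functions, i.e.\ exactly the form of Theorem~\ref{FCB}) match. This holds by construction, since Theorem~\ref{FCB} was stated in that form.

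\textbf{Functional bound.} Similarly, we invoke the abstract result of \cite{LNN25} for $\mathscr W_{\omega,N}$ in Barbour's class $\mathscr M_0$. Its hypotheses are again an FCB with exponential rate together with $\sigma_{\omega,N}^2\gtrsim N$. The time change $\sigma^2_{\omega,N,n}/\sigma^2_{\omega,N}$ in $\mathscr W_{\omega,N}$ is exactly the one required there, so the conclusion gives $d_{\mathcal S}\le \mathcal C_\omega N^{-1/2}$. The random constant $\mathcal C_\omega$ is taken as a polynomial in $\mathbf C_\omega$, $L$ and $C^{-1}$, and is finite on $\Omega_1\cap\{\omega:\ \text{Proposition~\ref{prop:0625b} applies}\}$.

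\textbf{Kolmogorov bound.} This is the main obstacle, since no existing abstract result applies. We plan to prove in Section~\ref{s:abst} a new abstract theorem via Tikhomirov's method \cite{T80}. Let $\varphi_N(t)=\eta_{\omega,N}(e^{itW})$ be the characteristic function. Using the FCB with $g(x)=x_j\exp(it\sum x)$, which is separately Lipschitz with constant $O(1+|t|)$, together with a Stein-type decomposition $W=\sum_n X_n$, $W^{(n,m)}=W-\sigma^{-1}\sum_{|j-n|\le m}X_j$, we derive the approximate ODE
\[
\varphi_N'(t)=-t\varphi_N(t)+R_N(t),\qquad |R_N(t)|\le C(1+|t|^2)\bigl(m^2N^{-1/2}+N r^{m}\bigr).
\]
We choose $m\asymp\log N$. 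Integrating the ODE gives $|\varphi_N(t)-e^{-t^2/2}|\lesssim (1+|t|^3)\log^2 N\, N^{-1/2}$ for $|t|\le c\sqrt N/\log N$. Esseen's smoothing inequality with $T\asymp \sqrt N/\log N$ then yields the stated bound $\log^2(N+1)N^{-1/2}$. The delicate point is bookkeeping the Lipschitz constants of the complex exponentials and the gaps $n_{l_i+1}-n_{l_i}\ge m$ in the FCB, so that the error stays of order $m^2/\sqrt N$.
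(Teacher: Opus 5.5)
Your Wasserstein and functional bounds follow the paper. With $\tilde Y_n=\overline{f_{\omega,N,n}}$ and $\mu=\eta_{\omega,N}$, Theorem~\ref{FCB} (with $\zeta=\eta$) gives (FCB) with $C_*=L\mathbf C_\omega$ and $R(n)=r^n$. Proposition~\ref{prop:0625b} gives $\sigma_{\omega,N}\ge C\sqrt N$, and one then applies \cite[Theorem~2.3]{LS20} and \cite[Theorem~2.11]{LNN25}. Note that the hypotheses (B1)--(B3) of \cite{LS20} are not literally the FCB. The paper checks them via \cite[Proposition~2.6]{LNN25} and the multi-gap form (FCB'); this is routine, not ``by construction''. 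Your upper bound $\sigma_{\omega,N}^2\lesssim N$ is correct but unnecessary, since every error term decreases in $\sigma_N$.

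The Kolmogorov part has a genuine gap. Consider an approximate ODE $\varphi_N'(t)=-t\varphi_N(t)+R_N(t)$ whose error is purely additive, of size $C(1+t^2)m^2N^{-1/2}$. Solving it gives only $|\varphi_N(t)-e^{-t^2/2}|\lesssim m^2N^{-1/2}(1+|t|)$, with no Gaussian damping. The Esseen integral $\int_1^T|\varphi_N(t)-e^{-t^2/2}|\,t^{-1}\,dt$ is then of order $m^2N^{-1/2}T\asymp\log N$ for $T\asymp\sqrt N/\log N$. Your stated bound $(1+|t|^3)\log^2N\,N^{-1/2}$ makes it even of order $N/\log N$, and optimizing $T$ yields only about $N^{-1/4}\log N$. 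An error of exactly this size is what a one- or two-level Stein decomposition produces. After expanding $e^{it\hat W_n}-1$, the factor $Y_n\hat W_n$ lives on an index block adjacent to the one carrying $e^{itW^{(n,m)}}$. Passing to a gapped remainder so that (FCB) applies costs an additive $O(t^2m^2N\sigma_N^{-3})$.

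The missing idea is Tikhomirov's iteration, as carried out in Section~\ref{s:5.4}. Continue the decomposition through the nested sums $W_j^{(\ell)}$, $\ell=1,\dots,k$, with $k\ge 2+\tfrac12\log_2 N$. On $|t|\le T_1=\sigma_N/(4LK)$ each level costs a factor $\|\xi_j^{(\ell)}\|_\infty\le 1/2$, so the final remainder $E_4$ is negligible. Pair each level with the identity $\mu(e^{itW_j^{(r)}})=\mu(\eta_j^{(r)}+1)\varphi(t)+\mu\bigl((\eta_j^{(r)}-\mu(\eta_j^{(r)}))e^{itW}\bigr)$.

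This puts the large $K^2N\sigma_N^{-3}t^2$ error in \emph{multiplicative} form $\theta(t)a(t)\varphi(t)$. The genuinely additive terms are then smaller: $b^{(2)}\asymp K^{5/2}\sqrt N\sigma_N^{-3}$, from a second-moment bound on $E_3$ via the covariance estimate (A2), and $b^{(0)}\asymp N\sigma_N^{-1}\hat R(K)$. Lemma~\ref{lem:sunklodas_2} then supplies the factor $e^{-t^2/4}$ on the $a$-terms. Esseen's inequality gives $T_1^{-1}+a^{(1)}+a^{(2)}+b^{(0)}+b^{(2)}T_1=O(\log^2N\,N^{-1/2})$ for $K\asymp\log N$.
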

We will prove Theorem \ref{CLT} as a consequence of more abstract normal approximation results under a functional correlation bound (Theorem \ref{thm:clt_tikhomirov}), using the functional correlation bound established in Theorem \ref{FCB}.
Section \ref{s:4} is devoted to the proof of Theorem \ref{FCB}, while Section \ref{s:abst} gives the proof of Theorem \ref{CLT}.


\section{Proof of Theorem A}\label{s:4}

By the induction argument on $p$ in \cite[Proposition 2.8]{LNN25}, it suffices to prove Theorem \ref{FCB} for the case $p=1$.
We therefore set $p=1$ and $l_1=l$ in Theorem \ref{FCB}, and establish
\begin{equation}\label{eq:0314c}
\Bigg|
\int G_{\omega, N}(x,x)\,d\zeta_{\omega, N}(x)
\;-\;
\iint G_{\omega, N}(x,y)\,d\zeta_{\omega, N}(x)\,d\zeta_{\omega, N}(y)
\Bigg|
\le  \mathbf{C}_\omega L \Vert g \Vert_{ \mathrm{Lip} } r^{n_{l+1}-n_{l}}
\end{equation}
(recall $L=\esssup_{\omega\in\Omega}||f_\omega||_\alpha$),
where
\[
G_{\omega ,N}(x,y)
=g\big(
\overline{f_{\omega,N,n_{1}}}(x),\cdots,\overline{f_{\omega,N,n_{l}}}(x),\overline{f_{\omega,N,n_{l+1}}}(y),\dots,\overline{f_{\omega,N,n_{k}}}(y)\big).
\]

Let $\psi\in\mathrm{BV}_{\Omega}(I)_1^+$ be the random density with which $\zeta$ is associated.

\subsection{Preliminary lemmas}
Let
\[
n_\ast := n_l + \Big\lceil \frac{n_{l+1}-n_l}{2}\Big\rceil.
\]
For $\theta$ coming from Condition 3 
let $n_0 \in \mathbb{N}$ such that 
\begin{equation}\label{eq:Ctheta}
 C_\theta \theta^{n_0} < \lambda_{\omega}^{n_0}\quad \text{$m$-a.e.~$\omega\in\Omega$},
\end{equation}
and write 
\[
n_\ast = k n_0 + m \quad \text{with $k,m \in \mathbb{N}$, $0 \leq m \leq n_0-1$.}
\]
Define
\[
\mathcal Q_\omega := \left\{ 
\left( \bigcap_{i=1}^k T_\omega^{-(k-i)n_0-m}(J_i) \right) \cap \widetilde{J}
\;\;:\;\; J_i \in \widehat{\mathcal{P}}_{\sigma^{(k-i)n_0 +m}\omega}^{(n_0)} (i=1,..., k), \ 
\widetilde{J} \in \widehat{\mathcal{P}}_\omega^{(m)}
\right\}.
\]

\begin{lem}\label{lem:rand-claim}
There exists
a constant $C_0>0$ depending only on the random system
$T$ 
such that for $m$-a.e.~$\omega\in\Omega$, any $N\in\mathbb N$ and any $J\in\mathcal Q_\omega$,
there exists $c_J\in J$ satisfying
\[
  \big|G_{\omega, N}(x,y)-G_{\omega, N}(c_J,y)\big|
  \le C_0L\|g\|_{\mathrm{Lip}}\rho^{\,n_{l+1}-n_l}
\]
for all $x\in J$ and $y\in I$, where $\rho:=\kappa_1^{-\alpha/2}\in(0,1)$.
\end{lem}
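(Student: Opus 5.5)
Since $G_{\omega,N}(x,y)$ depends on $x$ only through $\overline{f_{\omega,N,n_j}}(x)$ for $j\le l$, the plan is to reduce the statement to a one-variable Lipschitz estimate and then use the expansion along a single element $J\in\mathcal Q_\omega$. Fix $J$ and pick any $c_J\in J$. The centering constants $\zeta_{\omega,N}(f_{\sigma^{n_j}\omega}\circ T^{n_j}_\omega)$ cancel in differences. Changing one coordinate at a time, the separate Lipschitz property of $g$ gives
\[
\big|G_{\omega,N}(x,y)-G_{\omega,N}(c_J,y)\big|\le \mathrm{Lip}(g)\sum_{j=1}^{l}\big|f_{\sigma^{n_j}\omega}(T^{n_j}_\omega x)-f_{\sigma^{n_j}\omega}(T^{n_j}_\omega c_J)\big|.
\]
By the H\"older bound, each summand is at most $(L/2)\,|T^{n_j}_\omega x-T^{n_j}_\omega c_J|^\alpha$. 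So it suffices to control $|T^{n_j}_\omega x - T^{n_j}_\omega c_J|$ for $x\in J$ and $j\le l$. Note that $n_j\le n_l<n_\ast$.

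The key geometric input is that $J$ lies inside one element of $\mathcal P^{(n_\ast)}_\omega$. Indeed, $\widetilde J\in\widehat{\mathcal P}^{(m)}_\omega$ refines $\mathcal P^{(m)}_\omega$, and each $T^{-(k-i)n_0-m}_\omega(J_i)$ with $J_i\in\widehat{\mathcal P}^{(n_0)}_{\sigma^{(k-i)n_0+m}\omega}$ refines the corresponding time block of $\mathcal P^{(n_\ast)}_\omega=\bigvee_{r<n_\ast}T^{-r}_\omega\mathcal P_{\sigma^r\omega}$. Hence $T^{n}_\omega$ is monotone and $C^2$ on $J$ for every $n\le n_\ast$, and $T^{n}_\omega(J)$ is an interval. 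For $n_j\le n<n_\ast$, the map $T^{\,n_\ast-n_j}_{\sigma^{n_j}\omega}$ is injective on the interval $T^{n_j}_\omega(J)$. By the Hyperbolicity condition (applied in blocks of length $n_1$, with the leftover fewer than $n_1$ steps costing at most a bounded factor since $|T'|\ge$ some uniform lower bound on blocks, or absorbed into $C_0$), its derivative is at least $C^{-1}\kappa_1^{\,n_\ast-n_j}$. Therefore
\[
|T^{n_j}_\omega x-T^{n_j}_\omega c_J|\le \mathrm{diam}\,T^{n_j}_\omega(J)\le C\kappa_1^{-(n_\ast-n_j)}\,\mathrm{diam}\,T^{n_\ast}_\omega(J)\le C\kappa_1^{-(n_\ast-n_j)}.
\]
If the uniform expansion is only available at time multiples of $n_1$, I would write $n_\ast-n_j=qn_1+s$ with $0\le s<n_1$. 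I would then use that a single map has $|T'_\omega|\ge$ some uniform constant on partition elements; if that is not available, I would instead lose the first $s$ steps by composing blocks ending at $n_\ast$, which only requires the $n_1$-block bound. The constant $C$ depends only on $n_1,\kappa_1,\kappa_2$.

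Summing the geometric series gives
\[
\sum_{j\le l}\kappa_1^{-\alpha(n_\ast-n_j)}\le \frac{\kappa_1^{-\alpha(n_\ast-n_l)}}{1-\kappa_1^{-\alpha}},
\]
because the $n_j$ with $j\le l$ are distinct integers below $n_l$ within a block. More precisely, one sums over the distinct values of the $n_j$; repeated indices occur only across blocks, and here all $j\le l$ form a single block where $n_j<n_{j+1}$. Finally, $n_\ast-n_l\ge (n_{l+1}-n_l)/2$, so $\kappa_1^{-\alpha(n_\ast-n_l)}\le\rho^{\,n_{l+1}-n_l}$ with $\rho=\kappa_1^{-\alpha/2}$. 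This yields the claim with $C_0=C^\alpha/(2(1-\kappa_1^{-\alpha}))$, uniformly in $\omega,N,y$.

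The main obstacle is the bookkeeping that $J$ sits inside a single monotonicity branch of $T^{n_\ast}_\omega$, so that the expansion pullback is legitimate. In particular, one must check that the index shifts $(k-i)n_0+m$ in the definition of $\mathcal Q_\omega$ correctly tile $[0,n_\ast)$: the block for $J_i$ covers times $[(k-i)n_0+m,(k-i+1)n_0+m)$, and $\widetilde J$ covers $[0,m)$. The second delicate point is handling a remainder shorter than $n_1$ in the hyperbolicity condition without a per-step lower bound on $|T'|$. If needed, I would absorb this using $\kappa_2$ and the $n_1$-block bound to get a uniform constant.
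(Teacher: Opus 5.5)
Your proposal is correct and follows essentially the paper's argument. Separate Lipschitz continuity and the H\"older bound reduce everything to $\mathrm{diam}(T^{n_j}_\omega(J))$. This is controlled by the expansion of $T^{n_\ast-n_j}_{\sigma^{n_j}\omega}$ on the single branch of $T^{n_\ast}_\omega$ containing $J$. The same geometric sum over the distinct $n_j$ and the bound $n_\ast-n_l\ge (n_{l+1}-n_l)/2$ then finish the proof; the paper invokes bounded distortion at this step, but your mean-value lower bound on the derivative suffices.

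The remainder modulo $n_1$ is harmless, because Condition~1 gives $|T_\omega'|\ge\kappa_1^{n_1}\kappa_2^{1-n_1}$ pointwise. Alternatively, start the $n_1$-blocks at time $n_j$ and bound the last diameter by $1$. Note that blocks ending at $n_\ast$, as you suggest, would not by themselves avoid the need for a lower bound on the first $s$ steps.
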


\begin{proof}
Fix $\omega\in\Omega$, $N\in\mathbb N$ and $J\in\mathcal Q_\omega$.
Let $x,c_J\in J$ and $y\in I$.
By the definition of $G_{\omega,N}$ (it depends on $x$ only through the first $l$ orbit coordinates)
and the separate Lipschitz bound of $g$, we have
\[
\big|G_{\omega,N}(x,y)-G_{\omega,N}(c_J,y)\big|
\le
\|g\|_{\mathrm{Lip}}
\sum_{j=1}^{l}
\big| f_{\sigma^{n_j}\omega}(T_\omega^{n_j}(x))-f_{\sigma^{n_j}\omega}(T_\omega^{n_j}(c_J))\big|.
\]
Since $f_{\sigma^{n_j}\omega}$ is $\alpha$--H\"older with $\|f_{\sigma^{n_j}\omega}\|_\alpha\le L$,
\[
\big| f_{\sigma^{n_j}\omega}(T_\omega^{n_j}(x))-f_{\sigma^{n_j}\omega}(T_\omega^{n_j}(c_J))\big|
\le
L\big|T_\omega^{n_j}(x)-T_\omega^{n_j}(c_J)\big|^\alpha
\le
L\mathrm{diam}\big(T_\omega^{n_j}(J)\big)^\alpha .
\]
Hence
\[
\big|G_{\omega,N}(x,y)-G_{\omega,N}(c_J,y)\big|
\le
L\|g\|_{\mathrm{Lip}}
\sum_{j=1}^{l}\mathrm{diam}\big(T_\omega^{n_j}(J)\big)^\alpha .
\]

By construction of $\mathcal Q_\omega$, for each $1\le j\le l$ the map
$T_{\sigma^{n_j}\omega}^{\,n_\ast-n_j}$ maps $T_\omega^{n_j}(J)$ diffeomorphically onto
$T_\omega^{n_\ast}(J)$. 
Using bounded distortion and the uniform expansion constant
$\kappa_1>1$ from Condition~1, we obtain a constant $\widetilde C_0\ge 1$ (independent of $\omega,N,f,J$) such that
\[
\mathrm{diam}\big(T_\omega^{n_j}(J)\big)
\le
\widetilde C_0\kappa_1^{-(n_\ast-n_j)}\,\mathrm{diam}\big(T_\omega^{n_\ast}(J)\big)
\le
\widetilde C_0\kappa_1^{-(n_\ast-n_j)} .
\]
Therefore,
\[
\big|G_{\omega,N}(x,y)-G_{\omega,N}(c_J,y)\big|
\le
L\|g\|_{\mathrm{Lip}}\widetilde C_0^\alpha
\sum_{j=1}^{l}\kappa_1^{-\alpha(n_\ast-n_j)}.
\]
Set $q:=\kappa_1^{-\alpha}\in(0,1)$. Since $n_1<\cdots<n_l$, we have $n_\ast-n_j\ge n_\ast-n_l$ and hence
\[
\sum_{j=1}^{l}\kappa_1^{-\alpha(n_\ast-n_j)}
\le
\sum_{k=n_\ast-n_l}^{\infty} q^{k}
=
\frac{q^{\,n_\ast-n_l}}{1-q}.
\]
Recalling $n_\ast = n_l + \left\lceil\frac{n_{l+1}-n_l}{2}\right\rceil$, we have
$q^{n_\ast-n_l}\le \rho^{n_{l+1}-n_l}$ with $\rho:=q^{1/2}=\kappa_1^{-\alpha/2}\in(0,1)$.
Collecting constants yields
\[
\big|G_{\omega,N}(x,y)-G_{\omega,N}(c_J,y)\big|
\le
C_0L\|g\|_{\mathrm{Lip}}\rho^{\,n_{l+1}-n_l}
\]
with 
\[
C_0=\frac{\widetilde C_0^\alpha}{1-\kappa _1^{-\alpha}}.
\]
This completes the proof.
\end{proof}

By using Lemma~\ref{lem:rand-claim}, we obtain for $m$-a.e.\ $\omega\in\Omega$ and any $N\in\mathbb N$,
\begin{align*}
&\Bigg|
\int G_{\omega, N}(x,x)\,d\zeta_{\omega,N}(x)
-\sum_{J\in\mathcal Q_\omega}\int \hat J(x) G_{\omega, N}(c_J,x)\, d\zeta_{\omega,N}(x)
\Bigg|\\
&
=\left|
  \sum_{J \in \mathcal Q_\omega}
    \int _J \Big(G_{\omega, N}(x,x)-G_{\omega, N}(c_J,x)\Big)\,  d\zeta_{\omega,N}(x)
\right| 
\le C_0L \Vert g \Vert_{ \mathrm{Lip} } \rho^{n_{l+1}-n_l},
\end{align*}
where $c_J\in J$. Moreover,
\begin{align*}
&\Bigg|
\iint G_{\omega, N}(x,y)\,d\zeta_{\omega,N}(x)\,d\zeta_{\omega,N}(y)
-\sum_{J\in\mathcal Q_\omega} \zeta_{\omega,N}(J)\int G_{\omega, N}(c_J,y)\, 
   d\zeta_{\omega,N}(y)
\Bigg|
\\&
=\left|
  \sum_{J \in \mathcal Q_\omega}
    \int \left(\int_J \Big(G_{\omega, N}(x,y)-G_{\omega, N}(c_J,y) \Big) \, d\zeta_{\omega,N}(x)\right)d\zeta_{\omega,N}(y)
\right|
\le C_0L \Vert g \Vert_{ \mathrm{Lip} }\rho^{\,n_{l+1}-n_l}.
\end{align*}

On the other hand, 
since
$  n_{l+1} - n_* = \lfloor \frac{n_{l+1}-n_l}{2} \rfloor\geq \frac{n_{l+1}-n_l-1}{2}$,
setting $C_\kappa = \kappa^{-1/2}$ yields
\[
  \kappa^{n_{l+1} - n_*} \leq C_\kappa (\sqrt{\kappa})^{n_{l+1}-n_{l}}.
\]
Hence, to obtain the desired estimate \eqref{eq:0314c},
it suffices to show that
\begin{equation}\label{eq:*1}
\begin{split}
\Bigg|
\sum_{J\in\mathcal Q_\omega}\int (\hat J(x) - \zeta_{\omega,\infty}(J)) G_{\omega, N}(c_J,x)\,d\zeta_{\omega,N}(x)
\Bigg|
\le  \left(\sum_{j=1}^4C_j(\omega) \right)\Vert g \Vert_{ \mathrm{Lip} }\kappa^{n_{l+1}-n_\ast}
\end{split}\tag{*}
\end{equation}
and
\begin{equation}\label{eq:*2}
\begin{split}
\Bigg|
\sum_{J\in\mathcal Q_\omega}(\zeta_{\omega,N}(J)-\zeta_{\omega,\infty}(J))\int G_{\omega, N}(c_J,y)\,d\zeta_{\omega,N}(y)
\Bigg|
\le
(C_4(\omega)+C_5(\omega))\Vert g \Vert_{ \mathrm{Lip} }\kappa ^{n_{l+1}-n_\ast},
\end{split}\tag{**}
\end{equation}
where $C_j(\omega)$'s are given in \eqref{eq:0313b1}, \eqref{eq:0313b2}, \eqref{eq:0313b3} and \eqref{eq:0314}:
these inequalities yield \eqref{eq:0314c} with 
\begin{equation}\label{eq:0314c1}
r=\max\{\rho,\sqrt{\kappa}\}\qquad \text{and}\qquad \mathbf{C}_\omega =2C_0+2C_\kappa\sum_{j=1}^5C_j(\omega).
\end{equation}
We first give the proof of \eqref{eq:*1} throughout the rest of this section except the last subsection, where we give the proof of \eqref{eq:*2}, because the proof of \eqref{eq:*2} is similar to and easier than that of \eqref{eq:*1}. 

\subsection{Decomposition for \eqref{eq:*1}}
For each $J\in\mathcal Q_\omega$, let
$
\widetilde G_{\omega, N}(x):=\widetilde G_{\omega, N}(c_J,x)$ be the function that satisfies
$\widetilde G_{\omega, N}(c_J,T_\omega^{\,n_{l+1}}(x))=G_{\omega, N}(c_J,x).$
We need the following auxiliary lemma:
\begin{lem}\label{lemratio}
Let $u \in L^1(I)$. Then for each $J \in \mathcal Q_\omega$ and $m$-a.e.\ $\omega\in\Omega$ we have
\[
\int G_{\omega, N}(c_J,y) u(y)\, d\zeta_{\omega,N}(y)
=
\frac{\Leb\left( (\widetilde G_{\omega, N} \widehat{X}_{\sigma^{n_{l+1}}\omega,N-n_{l+1}})\, 
   \mathcal L_\omega^{\,n_{l+1}}(\psi_\omega u) \right)}
{\Leb\left( (\widehat{X}_{\sigma^{n_{l+1}}\omega,N-n_{l+1}})\, \mathcal L_\omega^{\,n_{l+1}}(\psi_\omega) \right)}.
\]
\end{lem}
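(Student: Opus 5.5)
The plan is to unfold the definition of $\zeta_{\omega,N}$ and then push the integral forward $n_{l+1}$ steps with the duality relation \eqref{eq:0115b}. By \eqref{measzeta} and the definition of a random probability measure associated with $\psi$,
\[
\int G_{\omega,N}(c_J,y)u(y)\,d\zeta_{\omega,N}(y)=\frac{\Leb\bigl(\widehat X_{\omega,0}\widehat X_{\omega,N}\,\psi_\omega u\, G_{\omega,N}(c_J,\cdot)\bigr)}{\Leb\bigl(\widehat X_{\omega,0}\widehat X_{\omega,N}\,\psi_\omega\bigr)} .
\]
Since $X_{\omega,N}\subset X_{\omega,0}$, the factor $\widehat X_{\omega,0}$ is redundant. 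The normalizing constant $\Leb(\widehat X_{\omega,0}\psi_\omega)$ appears in both numerator and denominator, so it cancels. It therefore suffices to treat numerator and denominator separately; the denominator is the special case $u\equiv1$, $G\equiv1$.

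\textbf{Factorizing the indicator.} The key identity I will use is
\[
\widehat X_{\omega,N}=\widehat X_{\omega,n_{l+1}}\cdot\bigl(\widehat X_{\sigma^{n_{l+1}}\omega,N-n_{l+1}}\circ T^{n_{l+1}}_\omega\bigr),
\]
valid because $n_{l+1}<N$. It follows directly from the description of $X_{\omega,n}$ as an intersection of preimages $T_\omega^{-j}(X_{\sigma^j\omega,0})$: the indices $0\le j\le n_{l+1}$ give the first factor, and the indices $n_{l+1}\le j\le N$ give the second. The overlap at $j=n_{l+1}$ is harmless.

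\textbf{Writing the integrand as a pullback.} By the defining property of $\widetilde G_{\omega,N}$, we have $G_{\omega,N}(c_J,y)=\widetilde G_{\omega,N}(T^{n_{l+1}}_\omega y)$. This holds because $G_{\omega,N}(c_J,\cdot)$ depends on $y$ only through $T^{n_j}_\omega y$ with $j\ge l+1$, i.e.\ through $f_{\sigma^{n_j}\omega}\circ T^{n_j-n_{l+1}}_{\sigma^{n_{l+1}}\omega}$ evaluated at $T^{n_{l+1}}_\omega y$. The centering constants are fixed numbers depending only on $(\omega,N)$, so they cause no trouble.

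\textbf{Applying \eqref{eq:0115b}.} Set $v:=\widetilde G_{\omega,N}\widehat X_{\sigma^{n_{l+1}}\omega,N-n_{l+1}}$ and take $n=n_{l+1}$. Then
\[
\Leb\bigl(\psi_\omega u\,\widehat X_{\omega,n_{l+1}}\,v\circ T^{n_{l+1}}_\omega\bigr)=\Leb\bigl(\widehat X_{\sigma^{n_{l+1}}\omega,0}\,\mathcal L^{n_{l+1}}_\omega(\psi_\omega u)\, v\bigr).
\]
Because $X_{\sigma^{n_{l+1}}\omega,N-n_{l+1}}\subset X_{\sigma^{n_{l+1}}\omega,0}$, the extra indicator is absorbed into $v$. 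This yields the numerator in the claimed form, and the denominator follows in the same way.

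\textbf{Main obstacle.} The only delicate point is a technical one: \eqref{eq:0115b} was stated for $u,v\in\BV(I)$, whereas here $u\in L^1$ and $v$ is merely bounded and measurable. I would first verify that \eqref{eq:0115b} extends to this setting. The underlying change of variables \eqref{eq:0115a} for $\mathcal L_{\omega,0}^n$ holds for $u\in L^1$ and $v\in L^\infty$, since it is just a branch-by-branch substitution over the monotonicity partition $\mathcal P^{(n)}_\omega$. The identity $\mathcal L^{n}_{\omega,0}(u_1\cdot u_2\circ T^n_\omega)=u_2\mathcal L^n_{\omega,0}u_1$ is likewise purely pointwise. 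Finally, the boundedness of $G$ (as $g$ is bounded on $[-L,L]^k$) makes all the integrals finite.
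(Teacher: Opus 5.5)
Your proposal is correct and is essentially the paper's argument: write $\zeta_{\omega,N}$ as a ratio of Lebesgue integrals against $\psi_\omega\widehat X_{\omega,N}$, factor $\widehat X_{\omega,N}=\widehat X_{\omega,n_{l+1}}\cdot(\widehat X_{\sigma^{n_{l+1}}\omega,N-n_{l+1}}\circ T_\omega^{n_{l+1}})$, and transfer forward using the duality \eqref{eq:0115a} together with $\mathcal L^{m}_{\omega,0}(\widehat X_{\omega,m}u)=\widehat X_{\sigma^m\omega,0}\mathcal L^m_\omega u$ (i.e.\ \eqref{eq:0115b}), absorbing $\widehat X_{\sigma^{n_{l+1}}\omega,0}$ into $\widehat X_{\sigma^{n_{l+1}}\omega,N-n_{l+1}}$. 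You are slightly more careful than the paper in two places: you treat the factor $\widehat X_{\omega,0}$ and the cancelling constant $\Leb(\widehat X_{\omega,0}\psi_\omega)$ explicitly, whereas the paper simply calls $\psi_\omega$ the density of $\zeta_\omega$; and you note that the duality must be extended from $\mathrm{BV}$ to $L^1\times L^\infty$, which the paper uses silently.
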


\begin{proof}
Since $\zeta_{\omega,N}$ satisfies
\[
\zeta_{\omega,N}(A)
=\frac{\zeta_\omega(A\cap X_{\omega,N})}{\zeta_\omega(X_{\omega,N})},
\]
and $\zeta_\omega$ has density $\psi_\omega$ with respect to Leb, we obtain
\begin{align*}
\int G_{\omega, N}(c_J,y)u(y)\, d\zeta_{\omega,N}(y)
&= \frac{\mathrm{Leb}\big( (\widetilde G_{\omega, N} \circ T_\omega^{n_{l+1}}) \psi_\omega u\widehat X_{\omega,N} \big)}
       {\mathrm{Leb}\big( \psi_\omega \widehat X_{\omega,N} \big)}\\
&=\frac{\mathrm{Leb}\big( (\widetilde G_{\omega, N} \circ T_\omega^{n_{l+1}}) \psi_\omega u ( \widehat{X}_{\sigma^{n_{l+1}}\omega,N-n_{l+1}}\circ T^{n_{l+1}})\widehat{X}_{\omega,n_{l+1}}\big)}
       {\mathrm{Leb}\big( \psi_\omega  ( \widehat{X}_{\sigma^{n_{l+1}}\omega,N-n_{l+1}}\circ T^{n_{l+1}}_\omega)\widehat{X}_{\omega,n_{l+1}}\big)}\\
&= \frac{\mathrm{Leb}\Big( (\widetilde G_{\omega, N} \widehat{X}_{\sigma^{n_{l+1}}\omega,N-n_{l+1}}) \circ T_\omega^{n_{l+1}}\psi_\omega u\widehat X_{\omega,n_{l+1}} \Big)}
       {\mathrm{Leb}\Big( (\widehat{X}_{\sigma^{n_{l+1}}\omega,N-n_{l+1}} \circ T_\omega^{n_{l+1}})\psi_\omega\widehat{X}_{\omega,n_{l+1}}\Big)}\\
&= \frac{\mathrm{Leb}\Big( (\widetilde G_{\omega, N} \widehat{X}_{\sigma^{n_{l+1}}\omega,N-n_{l+1}})\,\mathcal L_{\omega,0}^{\,n_{l+1}}(\psi_\omega u\widehat{X}_{\omega,n_{l+1}})\Big)}
       {\mathrm{Leb}\Big( (\widehat{X}_{\sigma^{n_{l+1}}\omega,N-n_{l+1}})\,\mathcal L_{\omega,0}^{\,n_{l+1}}(\psi_\omega\widehat{X}_{\omega,n_{l+1}})\Big)}
\\
&= \frac{\mathrm{Leb}\Big( (\widetilde G_{\omega, N} \widehat{X}_{\sigma^{n_{l+1}}\omega,N-n_{l+1}})\,\mathcal L_\omega^{\,n_{l+1}}(\psi_\omega u)\Big)}
       {\mathrm{Leb}\Big( (\widehat{X}_{\sigma^{n_{l+1}}\omega,N-n_{l+1}})\,\mathcal L_\omega^{\,n_{l+1}}(\psi_\omega)\Big)}.
\end{align*}
Here, we have used the relation 
\(\mathcal L_{\omega, 0}^m(\widehat{X}_{\omega,m}u) = \widehat X_{\sigma^{m}\omega,0} \mathcal L_\omega^mu.\)
\end{proof}

We set 
\[
(I)_{J}:=\int _JG_{\omega, N}(c_J,x) \, d\zeta_{\omega,N}(x),\quad  (II)_{J}:=\zeta_{\omega,\infty}(J)\int G_{\omega, N}(c_J,y)\, d\zeta_{\omega,N}(y).
\]
Note that the left-hand side of \eqref{eq:*1} is $|\sum_{J\in\cQ_\omega}\big( (I)_J-(II)_J\big)|$.
Recall that
\[
\widetilde{\mathcal L}_\omega^{n}=(\lambda^n_{\omega})^{-1}{\mathcal L}_\omega^{n}
\]
for each $\omega\in\Omega$ and $n\in\mathbb N$.
By Lemma~\ref{lemratio},
we obtain
\begin{align*}
(I)_{J}
&= 
   \frac{\mathrm{Leb}\big(\widetilde G_{\omega, N} \widehat X_{\sigma^{n_{l+1}}\omega,N-n_{l+1}} \,
        \mathcal L_\omega^{\,n_{l+1}}(\psi_\omega \widehat J)\big)}
        {\mathrm{Leb}\big(\widehat X_{\sigma^{n_{l+1}}\omega,N-n_{l+1}} \,
        \mathcal L_\omega^{\,n_{l+1}}(\psi_\omega)\big)}\notag\\
&= 
   \frac{
      \mathrm{Leb}\left(\widetilde G_{\omega, N} \widehat X_{\sigma^{n_{l+1}}\omega,N-n_{l+1}} 
       \widetilde{\mathcal L}_{\sigma^{n_{\ast}}\omega}^{\,n_{l+1}-n_*} 
      \circ \mathcal L_\omega^{\,n_*}(\psi_\omega \widehat J)\right)
   }{
      \mathrm{Leb}\left(\widehat X_{\sigma^{n_{l+1}}\omega,N-n_{l+1}} 
      \widetilde{\mathcal L}_{\sigma^{n_{\ast}}\omega}^{\,n_{l+1}-n_*} 
      \circ \mathcal L_\omega^{\,n_*}(\psi_\omega)\right)
   }
\end{align*}
and
\begin{align*}
\begin{split}
(II)_{J}&=
\zeta_{\omega,\infty}(J)\int G_{\omega, N}(c_J,y)\, d\zeta_{\omega,N}(y) \\
&= 
\zeta_{\omega,\infty}(J)\,
   \frac{\mathrm{Leb}\big(\widetilde G_{\omega, N} \widehat X_{\sigma^{n_{l+1}}\omega,N-n_{l+1}} 
        \mathcal L_\omega^{\,n_{l+1}}(\psi_\omega)\big)}
        {\mathrm{Leb}\big(\widehat X_{\sigma^{n_{l+1}}\omega,N-n_{l+1}} 
        \mathcal L_\omega^{\,n_{l+1}}(\psi_\omega)\big)}.
\end{split}
\end{align*}

Recall  
$Q_{\omega, n}u= \widetilde{\mathcal{L}}_\omega^n u-\nu_{\omega, \infty}(u)\,\phi_{\sigma^n(\omega)}
$
for each $\omega\in \Omega$ and $n\in \mathbb N$.
For each $J\in\mathcal Q_\omega$, let
\begin{align*}
&L_J = \Leb\left(\widetilde G_{\omega, N} \widehat X_{\sigma^{n_{l+1}}\omega,N-n_{l+1}}\phi_{\sigma^{n_{l+1}}\omega}\right)
      \nu_{\sigma^{n_{\ast}}\omega, \infty}\left(\mathcal L_\omega^{n_\ast}(\psi_\omega \widehat{J})\right),\\
&R_J = \Leb\left(\widetilde G_{\omega, N}\widehat X_{\sigma^{n_{l+1}}\omega,N-n_{l+1}}\,
      Q_{{\sigma^{n_{\ast}}\omega},n_{l+1}-n_\ast}\mathcal L_\omega^{n_\ast}(\psi_\omega \widehat{J})\right),
\\
&
L_0 = \Leb\left(\widehat X_{\sigma^{n_{l+1}}\omega,N-n_{l+1}}\phi_{\sigma^{n_{l+1}}\omega}\right)\,
      \nu_{\sigma^{n_{\ast}}\omega, \infty}\left(\mathcal L_\omega^{n_\ast}(\psi_\omega)\right),\\
&R_0 = \Leb\left(\widehat X_{\sigma^{n_{l+1}}\omega,N-n_{l+1}}\,
      Q_{{\sigma^{n_{\ast}}\omega},n_{l+1}-n_\ast}\mathcal L_\omega^{n_\ast}(\psi_\omega)\right).
\end{align*}

Then for each $J\in\mathcal Q_\omega$ it is clear that
\begin{align}
\label{ine4}
(I)_J = \frac{L_J+R_J}{L_0+R_0}. 
\end{align}
Furthermore, since $\nu_{\omega, \infty}(\psi_\omega)=1$,
\begin{align*}
\frac{L_J}{L_0}
&= \frac{\mathrm{Leb}\left(\widetilde G_{\omega, N} \widehat X_{\sigma^{n_{l+1}}\omega,N-n_{l+1}}  \phi_{\sigma^{n_{l+1}}\omega}\right)
         \nu_{\sigma^{n_{\ast}}\omega, \infty}\bigl(\mathcal L_\omega^{n_*}(\psi_\omega \widehat J\, )\bigr)}
       {\mathrm{Leb}\left(\widehat X_{\sigma^{n_{l+1}}\omega,N-n_{l+1}} \phi_{\sigma^{n_{l+1}}\omega}\right)
         \nu_{\sigma^{n_{\ast}}\omega, \infty}\bigl(\mathcal L_\omega^{n_*}(\psi_\omega)\bigr)}
\\
&= \frac{\mathrm{Leb}\left(\widehat X_{\sigma^{n_{l+1}}\omega,N-n_{l+1}} \phi_{\sigma^{n_{l+1}}\omega} \widetilde G_{\omega, N}\right)}
        {\mathrm{Leb}\left(\widehat X_{\sigma^{n_{l+1}}\omega,N-n_{l+1}} \phi_{\sigma^{n_{l+1}}\omega}\right)}
   \times  \nu_{\omega, \infty}(\psi_\omega \widehat J)
\\
&= \frac{\mathrm{Leb}\left(\widehat X_{\sigma^{n_{l+1}}\omega,N-n_{l+1}} \phi_{\sigma^{n_{l+1}}\omega} \widetilde G_{\omega, N}\right)}
        {\mathrm{Leb}\left(\widehat X_{\sigma^{n_{l+1}}\omega,N-n_{l+1}} \phi_{\sigma^{n_{l+1}}\omega}\right)}
   \times \zeta_{\omega, \infty}(J).
\end{align*}
On the other hand,
since $\widehat X_{\sigma^{n_{l+1}}\omega,N-n_{l+1}}$ and $\widetilde{\mathcal L}_\omega^{\,n_{l+1}}(\psi_\omega)$ are positive functions, we have
\[
\frac{\mathrm{Leb}\left(\widetilde G_{\omega, N} \widehat X_{\sigma^{n_{l+1}}\omega,N-n_{l+1}}
        \,\widetilde{\mathcal L}_\omega^{\,n_{l+1}}(\psi_\omega)\right)}
       {\mathrm{Leb}\left(\widehat X_{\sigma^{n_{l+1}}\omega,N-n_{l+1}} \,
        \widetilde{\mathcal L}_\omega^{\,n_{l+1}}(\psi_\omega)\right)}\le ||g||_{\mathrm{Lip}}.
\]
Hence since $\nu_{\omega, \infty}(\phi_\omega)=\nu_{\omega, \infty}(\psi_\omega)=1$, we have
\begin{align*}
&\left| \frac{L_J}{L_0}-(II)_{J} \right|\\
&= \zeta_{\omega, \infty}(J)
\left|
\frac{\mathrm{Leb}\left(\widehat X_{\sigma^{n_{l+1}}\omega,N-n_{l+1}} \phi_{\sigma^{n_{l+1}}\omega} \widetilde G_{\omega, N}\right)}
     {\mathrm{Leb}\left(\widehat X_{\sigma^{n_{l+1}}\omega,N-n_{l+1}} \phi_{\sigma^{n_{l+1}}\omega}\right)}
- \frac{\mathrm{Leb}\left(\widetilde G_{\omega, N} \widehat X_{\sigma^{n_{l+1}}\omega,N-n_{l+1}}
        \,\widetilde{\mathcal L}_\omega^{\,n_{l+1}}( \psi_\omega)\right)}
       {\mathrm{Leb}\left(\widehat X_{\sigma^{n_{l+1}}\omega,N-n_{l+1}} \,
        \widetilde{\mathcal L}_\omega^{\,n_{l+1}}(\psi_\omega)\right)}
\right|\\
&
\leq \zeta_{\omega, \infty}(J)\Biggl(
\left|
\frac{\mathrm{Leb}\left(\widehat X_{\sigma^{n_{l+1}}\omega,N-n_{l+1}}\phi_{\sigma^{n_{l+1}}\omega} \widetilde G_{\omega, N}\right)}
     {\mathrm{Leb}\left(\widehat X_{\sigma^{n_{l+1}}\omega,N-n_{l+1}}\phi_{\sigma^{n_{l+1}}\omega}\right)}
- \frac{\mathrm{Leb}\left(\widetilde G_{\omega, N} \widehat X_{\sigma^{n_{l+1}}\omega,N-n_{l+1}}
       \,\widetilde{\mathcal L}_\omega^{\,n_{l+1}}(\psi_\omega)\right)}
       {\mathrm{Leb}\left(\widehat X_{\sigma^{n_{l+1}}\omega,N-n_{l+1}}\phi_{\sigma^{n_{l+1}}\omega}\right)}
\right|
\\
&\qquad +
\left|
\frac{\mathrm{Leb}\left(\widetilde G_{\omega, N} \widehat X_{\sigma^{n_{l+1}}\omega,N-n_{l+1}}
        \,\widetilde{\mathcal L}_\omega^{\,n_{l+1}}(\psi_\omega)\right)}
       {\mathrm{Leb}\left(\widehat X_{\sigma^{n_{l+1}}\omega,N-n_{l+1}}\phi_{\sigma^{n_{l+1}}\omega}\right)}
- \frac{\mathrm{Leb}\left(\widetilde G_{\omega, N} \widehat X_{\sigma^{n_{l+1}}\omega,N-n_{l+1}}
        \,\widetilde{\mathcal L}_\omega^{\,n_{l+1}}(\psi_\omega)\right)}
       {\mathrm{Leb}\left(\widehat X_{\sigma^{n_{l+1}}\omega,N-n_{l+1}} 
        \widetilde{\mathcal L}_\omega^{\,n_{l+1}}(\psi_\omega)\right)}
\right|
\Biggr)
\\
&\leq \zeta_{\omega, \infty}(J)\Bigg(
\left|\frac{\mathrm{Leb}\left(\widehat X_{\sigma^{n_{l+1}}\omega,N-n_{l+1}}
      \widetilde G_{\omega, N}(\phi_{\sigma^{n_{l+1}}\omega} - \widetilde{\mathcal L}_\omega^{\,n_{l+1}}(\psi_\omega))\right)}
     {\mathrm{Leb}\left(\widehat X_{\sigma^{n_{l+1}}\omega,N-n_{l+1}}\phi_{\sigma^{n_{l+1}}\omega}\right)}\right|
\\
&\qquad+ \frac{||g||_{\rm Lip}}
       {\mathrm{Leb}\left(\widehat X_{\sigma^{n_{l+1}}\omega,N-n_{l+1}}\phi_{\sigma^{n_{l+1}}\omega}\right)}
   \,\mathrm{Leb}\left(\left|\widehat X_{\sigma^{n_{l+1}}\omega,N-n_{l+1}}
        (\phi_{\sigma^{n_{l+1}}\omega} - \widetilde{\mathcal L}_\omega^{\,n_{l+1}}(\psi_\omega))\right|\right)
\Bigg)
\\
&\leq 2C_\phi \zeta_{\omega, \infty}(J)\,
\|\widetilde{\mathcal L}_\omega^{\,n_{l+1}}(\phi_\omega - \psi_\omega)\|_{\infty}
\|g\|_{\mathrm{Lip}}.
\end{align*}
Therefore,
since
$
\widetilde{\mathcal L}_\omega^{\,n_{l+1}}(\phi_\omega-\psi_\omega)
=
\phi_{\sigma^{n_{l+1}}\omega}
-
\widetilde{\mathcal L}_\omega^{\,n_{l+1}}(\psi_\omega)$ by Theorem~\ref{lem0916}(2) and $\nu_{\omega,\infty}(\psi_\omega)=1$,
it follows from Condition 4 applied with $u=\psi_\omega$ that 
\begin{align}
\label{ine5}
\Bigg|\frac{L_J}{L_0} - (II)_J \Bigg|
\le
C_{1}(\omega)\|g\|_{\mathrm{Lip}}\zeta_{\omega,\infty}(J)
\kappa^{n_{l+1}},
\end{align}
where
\begin{align}\label{eq:0313b1}
C_{1}(\omega)
:=
2C_\phi D\|\psi_\omega\|_{BV}.
\end{align}

In conclusion, by \eqref{ine4} and \eqref{ine5}, the left-hand side of \eqref{eq:*1} is bounded by
\begin{align}\label{eq:0313b}
&\notag\sum_{J\in\mathcal Q_\omega}\left(\left|(I)_J - \frac{L_J+R_J}{L_0+R_0}\right|+\Bigg|(II)_J - \frac{L_J}{L_0}\Bigg|+\left| \frac{L_J+R_J}{L_0+R_0}-\frac{L_J}{L_0}\right|\right)\\
&\le C_{1}(\omega)
\kappa^{n_{l+1}} + \frac{1}{|L_0+R_0|}\sum_{J \in \mathcal Q_\omega}|R_J|+\frac{|R_0|}{|L_0+R_0||L_0|}\sum_{J \in \mathcal Q_\omega} |L_J|.
\end{align}
In the following sections, we will give the desired upper bounds on $\frac{1}{|L_0+R_0|}\sum_{J \in \mathcal Q_\omega}|R_J|$ and $\frac{|R_0|}{|L_0+R_0||L_0|}\sum_{J \in \mathcal Q_\omega} |L_J|$.

\subsection{Estimates of $L_0,R_0,L_J,R_J$.}

We first estimate $|L_0|,|R_0|,\sum_{J\in\mathcal Q_\omega}|L_J|$ as follows.
\begin{lem}
\label{proesti2}
We have\begin{align*}
|R_0| &\leq D\kappa^{n_{l+1}-n_{\ast}}\lambda_\omega^{n_\ast}
\|\psi_\omega\|_{BV}\Leb(X_{\sigma^{n_{l+1}}\omega,N-n_{l+1}}), \\
|L_0| &\geq C_\phi^{-1}\Leb(X_{\sigma^{n_{l+1}}\omega,N-n_{l+1}})
\lambda_\omega^{n_\ast}
\end{align*}
and
\[
\sum_{J \in \mathcal Q_{\omega}} |L_J|
   \leq C_\phi\|g\|_{\mathrm{Lip}}
      \Leb(X_{\sigma^{n_{l+1}}\omega,N-n_{l+1}})
      \lambda_\omega^{n_\ast}.
\]
\end{lem}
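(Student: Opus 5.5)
The plan is to estimate each quantity directly from its definition, using three inputs: the conformality relation of Theorem~\ref{lem0916}(1), the two-sided bound on $\phi$ from Condition~4, and the uniform exponential bound on $Q$ from Condition~4. Throughout write $\widehat X:=\widehat X_{\sigma^{n_{l+1}}\omega,N-n_{l+1}}$.

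\textbf{Step 1: the factor $\nu_{\sigma^{n_\ast}\omega,\infty}(\mathcal L_\omega^{n_\ast}(\cdot))$.} Iterating $\nu_{\sigma\omega,\infty}(\mathcal L_\omega u)=\lambda_\omega\nu_{\omega,\infty}(u)$ gives
\[
\nu_{\sigma^{n_\ast}\omega,\infty}\bigl(\mathcal L_\omega^{n_\ast}u\bigr)=\lambda_\omega^{n_\ast}\,\nu_{\omega,\infty}(u).
\]
Applying this with $u=\psi_\omega$ and using $\nu_{\omega,\infty}(\psi_\omega)=1$, the second factor of $L_0$ equals $\lambda_\omega^{n_\ast}$.

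\textbf{Step 2: $L_0$.} By Condition~4, $\phi_{\sigma^{n_{l+1}}\omega}\ge C_\phi^{-1}$ pointwise. Hence
\[
\Leb\bigl(\widehat X\,\phi_{\sigma^{n_{l+1}}\omega}\bigr)\ge C_\phi^{-1}\Leb(X_{\sigma^{n_{l+1}}\omega,N-n_{l+1}}),
\]
and multiplying by the factor from Step~1 gives the lower bound for $|L_0|$.

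\textbf{Step 3: $\sum_J|L_J|$.} By construction $\widetilde G_{\omega,N}$ takes values of $g$, so $|\widetilde G_{\omega,N}|\le\|g\|_\infty\le\|g\|_{\mathrm{Lip}}$. Together with $\phi\le C_\phi$ this bounds the first factor of $L_J$ by $C_\phi\|g\|_{\mathrm{Lip}}\Leb(X_{\sigma^{n_{l+1}}\omega,N-n_{l+1}})$. By Step~1, the second factor equals $\lambda_\omega^{n_\ast}\nu_{\omega,\infty}(\psi_\omega\widehat J)\ge0$. The sets $J\in\mathcal Q_\omega$ are pairwise disjoint and cover at most $I$, so
\[
\sum_J\nu_{\omega,\infty}(\psi_\omega\widehat J)\le\nu_{\omega,\infty}(\psi_\omega)=1,
\]
which gives the stated bound.

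\textbf{Step 4: $R_0$.} Bounding the integral by $\Leb(X_{\sigma^{n_{l+1}}\omega,N-n_{l+1}})$ times a sup norm gives
\[
|R_0|\le\Leb(X_{\sigma^{n_{l+1}}\omega,N-n_{l+1}})\,\bigl\|Q_{\sigma^{n_\ast}\omega,\,n_{l+1}-n_\ast}(\mathcal L_\omega^{n_\ast}\psi_\omega)\bigr\|_\infty .
\]
Condition~4 bounds the sup norm by $D\kappa^{n_{l+1}-n_\ast}\|\mathcal L_\omega^{n_\ast}\psi_\omega\|_{\mathrm{BV}}$. It then remains to show
\[
\|\mathcal L_\omega^{n_\ast}\psi_\omega\|_{\mathrm{BV}}\le\lambda_\omega^{n_\ast}\|\psi_\omega\|_{\mathrm{BV}}. \qquad (\dagger)
\]

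\textbf{Main obstacle: proving $(\dagger)$.} As written, $(\dagger)$ is not a consequence of boundedness of $\mathcal L$ alone, since the closed operator can expand the BV norm by a constant factor. Two routes are available.

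The first route is linearity of $Q$ in $u$. Since $\nu_{\sigma^{n_\ast}\omega,\infty}(\mathcal L^{n_\ast}_\omega\psi_\omega)=\lambda_\omega^{n_\ast}$ by Step~1, we have
\[
\lambda_\omega^{-n_\ast}\mathcal L_\omega^{n_\ast}\psi_\omega=\phi_{\sigma^{n_\ast}\omega}+Q_{\omega,n_\ast}\psi_\omega .
\]
Therefore
\[
Q_{\sigma^{n_\ast}\omega,\,n_{l+1}-n_\ast}\bigl(\mathcal L_\omega^{n_\ast}\psi_\omega\bigr)=\lambda_\omega^{n_\ast}\Bigl(Q_{\omega,n_{l+1}}\psi_\omega-\kappa\text{-small correction}\Bigr),
\]
using $Q(\phi)=0$ and the cocycle identity
\[
\widetilde{\mathcal L}^{\,n_{l+1}-n_\ast}_{\sigma^{n_\ast}\omega}\widetilde{\mathcal L}^{\,n_\ast}_{\omega}=\widetilde{\mathcal L}^{\,n_{l+1}}_\omega .
\]
This reduces everything to $\|Q_{\omega,n_{l+1}}\psi_\omega\|_\infty\le D\kappa^{n_{l+1}}\|\psi_\omega\|_{\mathrm{BV}}$, which is at most $D\kappa^{n_{l+1}-n_\ast}\|\psi_\omega\|_{\mathrm{BV}}$. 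Concretely, $Q_{\sigma^{n_\ast}\omega,\,n_{l+1}-n_\ast}(\mathcal L_\omega^{n_\ast}\psi_\omega)$ equals exactly $\lambda_\omega^{n_\ast}Q_{\omega,n_{l+1}}\psi_\omega$, so this route should give the bound cleanly and is the one I would use.

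The second route is a Lasota--Yorke inequality. It would use Condition~3 and the choice of $n_0$ in \eqref{eq:Ctheta} to show that $\lambda^{-n}\mathcal L^n$ is uniformly bounded in BV along the blocks of length $n_0$ defining $\mathcal Q_\omega$. I would keep this only as a fallback.
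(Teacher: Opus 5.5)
Your proposal is correct and matches the paper's proof: Steps 1--3 are the paper's arguments for $L_0$ and $\sum_J|L_J|$ (conformality, the bounds $C_\phi^{-1}\le\phi\le C_\phi$, $\|\widetilde G_{\omega,N}\|_\infty\le\|g\|_{\mathrm{Lip}}$), and your first route for $R_0$ is the paper's key identity $Q_{\sigma^{n_\ast}\omega,\,n_{l+1}-n_\ast}(\mathcal L_\omega^{n_\ast}\psi_\omega)=\lambda_\omega^{n_\ast}Q_{\omega,n_{l+1}}(\psi_\omega)$, followed by Condition~4 and $\kappa^{n_{l+1}}\le\kappa^{n_{l+1}-n_\ast}$. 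Since that identity is exact, the ``$\kappa$-small correction'' and the detour through $(\dagger)$ can simply be dropped.
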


\begin{proof}
For the first inequality, note that,
for any $\omega\in\Omega$, any $u\in \BV(I)$, and any integers
$m_1,m_2\ge 0$, 
\begin{align*}
Q_{\sigma^{m_1}\omega,m_2}\bigl(\mathcal L_\omega^{m_1}u\bigr)
&=
(\lambda_{\sigma^{m_1}\omega}^{m_2})^{-1}
\mathcal L_{\sigma^{m_1}\omega}^{m_2}\bigl(\mathcal L_\omega^{m_1}u\bigr)
-
\nu_{\sigma^{m_1}\omega,\infty}\bigl(\mathcal L_\omega^{m_1}u\bigr)
\phi_{\sigma^{m_1+m_2}\omega}\notag\\
&=\lambda_\omega^{m_1}(\lambda_\omega^{m_1+m_2})^{-1}\mathcal L_\omega^{m_1+m_2}u -
\lambda_\omega^{m_1}\nu_{\omega,\infty}(u)\phi_{\sigma^{m_1+m_2}\omega}\notag\\
&=\lambda_\omega^{m_1}Q_{\omega,m_1+m_2}(u).
\end{align*}
from the definition of $Q_{\omega,n}$.
Thus, by Condition 4,
\begin{align*}
|R_0|
&\le
\Bigl\|
Q_{\sigma^{n_{\ast}}\omega,n_{l+1}-n_{\ast}}
\bigl(\mathcal L_\omega^{n_{\ast}}(\psi_\omega)\bigr)
\Bigr\|_{\infty}
\Leb(X_{\sigma^{n_{l+1}}\omega,N-n_{l+1}})\\
&=
\lambda_\omega^{n_{\ast}}
\Bigl\|
Q_{\omega,n_{l+1}}(\psi_\omega)
\Bigr\|_{\infty}
\Leb(X_{\sigma^{n_{l+1}}\omega,N-n_{l+1}})\\
&\le
D\kappa^{\,n_{l+1}}\lambda_\omega^{n_{\ast}}\|\psi_\omega\|_{BV}
\Leb(X_{\sigma^{n_{l+1}}\omega,N-n_{l+1}}).
\end{align*}
Since $\kappa\in (0,1)$, this implies the first inequality.

For the second one, it follows from Theorem~\ref{lem0916}(1),
\begin{align*}
|L_0| &= \bigl|\Leb(\widehat X_{\sigma^{n_{l+1}}\omega,N-n_{l+1}}\phi_{\sigma^{n_{l+1}}\omega})
   \nu_{\sigma^{n_{\ast}}\omega, \infty}(\mathcal L_\omega^{n_\ast}(\psi_\omega))\bigr|
   \\
   &\geq C_\phi^{-1}\Leb(X_{\sigma^{n_{l+1}}\omega,N-n_{l+1}})
   \lambda_\omega^{n_{\ast}}.
\end{align*}

For the last inequality, since $\|\widetilde G_{\omega, N}\|_\infty\le \|g\|_{\mathrm{Lip}}$, for each $J \in \mathcal Q_{\omega}$ we have
\begin{align*}
|L_J| &= |\Leb\left(\widetilde G_{\omega, N} \widehat X_{\sigma^{n_{l+1}}\omega,N-n_{l+1}}\phi_{\sigma^{n_{l+1}}\omega}\right)
      \nu_{\sigma^{n_{\ast}}\omega, \infty}\left(\mathcal L_\omega^{n_\ast}(\psi_\omega \widehat{J})\right)|
\\
&\leq C_\phi\|g\|_{\mathrm{Lip}}\Leb(X_{\sigma^{n_{l+1}}\omega,N-n_{l+1}})
\lambda_\omega^{n_{\ast}}\nu_{\omega, \infty}(\psi_\omega \widehat J).
\end{align*}
Hence, we get the conclusion. 
\end{proof}

We prepare some notation for the estimate on $\sum_{J\in\mathcal Q_\omega}|R_J|$.
Recall $C_\theta$ and $n_0$ from \eqref{eq:Ctheta}.
We introduce
\[
\theta_0:=C_\theta\theta^{n_0},\qquad
\Theta:=\esssup_{\omega\in\Omega}\frac{\theta_0}{\lambda_\omega^{n_0}}<1,\qquad
\lambda_-:=\essinf_{\omega\in\Omega}\lambda_\omega,
\]
Notice that, since $\lambda_\omega =\eta_\omega(X_{\omega,1})\in [0,1]$ 
for $m$-a.e.~$\omega\in\Omega$ (cf.~\cite{AFGV}), we have
$\lambda_\omega\ge \lambda_\omega^{n_0}$  for $m$-a.e.~$\omega\in\Omega$. Thus
$\lambda_- \ge  \frac{\theta_0}{\Theta}>0.$
We also introduce
\begin{align*}
K_{\omega, n}:=8(2\xi_{\om}^{(n)}+1)\norm{((T_{\omega}^{n})^{\prime})^{-1}}_{\infty}
\dl_{\om,n}^{-1},\qquad K_n:=\esssup_{\omega\in \Omega}K_{\omega, n}.
\end{align*}
Notice that $K_n$ is finite 
by Condition 3. Hence so is
\begin{equation*}
    C_2'
:=
\max\Big\{1,C_\theta,\max_{0\le m\le n_0-1} K_m\theta_0^{-m/n_0}, K_{n_0}(1-\Theta)^{-1}\lambda_-^{-n_0}
\Big\}.
\end{equation*}
\begin{pro}
\label{proesti}
It holds that
\begin{equation*}
\sum_{J \in \mathcal Q_{\omega}} |R_J|
   \leq (6C_2'+C_\phi^2)\|\psi_\omega\|_{\mathrm{BV}}
      D\kappa^{n_{l+1}-n_{\ast}}\lambda_\omega^{n_\ast}
      \|g\|_{\mathrm{Lip}}\Leb(X_{\sigma^{n_{l+1}}\omega,N-n_{l+1}}).
      \end{equation*}
\end{pro}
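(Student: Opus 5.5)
We first reduce each $R_J$ to a sup-norm estimate. Since $|\widetilde G_{\omega,N}|\le\|g\|_{\mathrm{Lip}}$,
\[
|R_J|\le \|g\|_{\mathrm{Lip}}\,\Leb(X_{\sigma^{n_{l+1}}\omega,N-n_{l+1}})\,\bigl\|Q_{\sigma^{n_\ast}\omega,n_{l+1}-n_\ast}\bigl(\mathcal L_\omega^{n_\ast}(\psi_\omega\widehat J)\bigr)\bigr\|_\infty .
\]
By Condition~4 the last factor is at most $D\kappa^{n_{l+1}-n_\ast}\|\mathcal L_\omega^{n_\ast}(\psi_\omega\widehat J)\|_{\mathrm{BV}}$. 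We do not use the composition identity $Q_{\sigma^{m_1}\omega,m_2}\mathcal L^{m_1}_\omega=\lambda^{m_1}_\omega Q_{\omega,m_1+m_2}$ here, since applying it with $u=\psi_\omega\widehat J$ would require $\sum_J\|\psi_\omega\widehat J\|_{\mathrm{BV}}$, which is too large. Everything is therefore reduced to the Lasota--Yorke-type bound
\[
\sum_{J\in\mathcal Q_\omega}\|\mathcal L_\omega^{n_\ast}(\psi_\omega\widehat J)\|_{\mathrm{BV}}\le (6C_2'+C_\phi^2)\,\lambda_\omega^{n_\ast}\|\psi_\omega\|_{\mathrm{BV}}.
\]

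To prove this bound we use \eqref{eq:BV-L1} to split the BV norm as $2\var(\cdot)+\|\cdot\|_1$ and treat the two parts separately. For the $L^1$ part, positivity and the disjointness of the $J$'s give $\sum_J\|\mathcal L^{n_\ast}_\omega(\psi_\omega\widehat J)\|_1=\Leb(\mathcal L^{n_\ast}_\omega\psi_\omega)$. We would compare $\Leb$ with $\nu_{\sigma^{n_\ast}\omega,\infty}$ via $\phi$ and Condition~4: write $\widetilde{\mathcal L}^{n_\ast}_\omega\psi_\omega=\phi_{\sigma^{n_\ast}\omega}+Q_{\omega,n_\ast}\psi_\omega$, and use $C_\phi^{-1}\le\phi\le C_\phi$ together with $\nu_{\omega,\infty}(\psi_\omega)=1$. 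This should produce a term of the form $C_\phi^2\lambda^{n_\ast}_\omega\|\psi_\omega\|_{\mathrm{BV}}$, possibly with a $D$ absorbed into the constant; that is the $C_\phi^2$ in the statement.

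For the variation part, each $J$ is contained in a monotonicity interval of $T^{n_\ast}_\omega$, so $\mathcal L^{n_\ast}_\omega(\psi_\omega\widehat J)$ is a single-branch pushforward $(\psi_\omega/|(T^{n_\ast}_\omega)'|)\circ T^{-n_\ast}_{\omega,J}\cdot\widehat{T^{n_\ast}_\omega J}$. Its variation is therefore bounded by $\var_J(\psi_\omega/|(T^{n_\ast}_\omega)'|)$ plus twice the sup over $J$ (the endpoint jumps). We then iterate in blocks of length $n_0$, following the factorization $n_\ast=kn_0+m$ that defines $\mathcal Q_\omega$.

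The one-block estimate is a Liverani--Maume-Deschamps style inequality. Its jump terms are controlled by $\sup_P|h|\le\var_P h+\nu_{\omega,\infty}(P)^{-1}\nu_{\omega,\infty}(|h|\widehat P)$ on good elements $P\in\mathcal P^{(n)}_{\omega,g}$. Bad elements have $\nu_\infty$-measure zero, and the contiguity count $\xi^{(n)}_\omega$ lets their contributions be absorbed into neighbouring good ones. The outcome should be
\[
\sum_P\var\bigl(\mathcal L^{n}_\omega(h\widehat P)\bigr)\le 2\theta_0\var(h)+K_{\omega,n}\,\nu_{\omega,\infty}(|h|)\quad\text{for }n=n_0,
\]
with the analogous bound for $n=m<n_0$ using $K_m$. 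Iterating this $k$ times, with $\nu_{\sigma^j\omega,\infty}(|\mathcal L^j_\omega h|)=\lambda^j_\omega\nu_{\omega,\infty}(|h|)$ from the conformality in Theorem~\ref{lem0916}(1), gives a geometric sum. The ratio $\theta_0/\lambda^{n_0}\le\Theta<1$ makes it summable, producing the factor $K_{n_0}(1-\Theta)^{-1}\lambda_-^{-n_0}$ multiplied by $\lambda^{n_\ast}_\omega$. Bounding the remaining $\theta_0^k\var\psi_\omega$ by $\lambda^{n_\ast}_\omega\var\psi_\omega$ uses $\Theta<1$, and the initial $m$-block is handled by the entries $\max_m K_m\theta_0^{-m/n_0}$ and $C_\theta$ in $C_2'$. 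Collecting the factors of $2$ from \eqref{eq:BV-L1} and from the jump terms yields the constant $6C_2'$.

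The main obstacle is this one-block inequality, and specifically the handling of non-full branches and bad intervals. The jump contributions must be charged to $\nu_\infty$-masses rather than Lebesgue masses, and that is exactly where $\delta_{\omega,n}^{-1}$ and $\xi^{(n)}_\omega$ enter through Condition~3. The difficulty is to keep the $k$-fold iteration compatible with the partition $\mathcal Q_\omega$: the block decomposition of $\mathcal Q_\omega$ must line up with the $\widehat{\mathcal P}^{(n_0)}$ partitions used in each step.
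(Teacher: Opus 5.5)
Your overall route is the paper's. You bound $|R_J|$ by $\|g\|_{\mathrm{Lip}}\Leb(X_{\sigma^{n_{l+1}}\omega,N-n_{l+1}})\,\|Q_{\sigma^{n_\ast}\omega,n_{l+1}-n_\ast}(\cL^{n_\ast}_\omega(\psi_\omega\widehat J))\|_\infty$, apply Condition~4 at $\sigma^{n_\ast}\omega$ (rightly without moving $\cL^{n_\ast}_\omega$ inside $Q$), split with \eqref{eq:BV-L1}, and iterate a Lasota--Yorke inequality along the block structure of $\mathcal Q_\omega$.

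The gap is in this last step. Your one-block inequality has variation coefficient $2\theta_0$, and a per-block constant does not survive as a single factor in $6C_2'$; it compounds. After $k$ blocks you get $(2\theta_0)^k\var(\cdot)$ plus
\[
K_{n_0}\sum_{i=0}^{k-1}(2\theta_0)^i\lambda_\omega^{n_\ast-(i+1)n_0}\le K_{n_0}\lambda_\omega^{n_\ast-n_0}\sum_{i=0}^{k-1}(2\Theta)^i .
\]
This is $O(\lambda_\omega^{n_\ast})$ uniformly in $n_\ast$ only if $2\Theta<1$, whereas for the $n_0$ fixed in \eqref{eq:Ctheta} you only know $\Theta<1$. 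Your remark that $\theta_0/\lambda^{n_0}\le\Theta$ makes the sum converge therefore does not apply to the inequality you wrote. Enlarging $n_0$ would change $C_2'$ and still leave the inequality unproved.

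The estimate actually needed is Lemma~\ref{LYine}, taken from \cite[Lemma~1.5.1]{AFGV}:
\[
\sum_{J\in\widehat{\mathcal P}^{(n)}_\omega}\var(\cL^n_\omega(\widehat J u))\le A^{(n)}_\omega\var(u)+K_{\omega,n}\nu_{\omega,\infty}(|u|),\qquad A^{(n)}_\omega=(9+16\xi^{(n)}_\omega)\|((T^n_\omega)')^{-1}\|_\infty .
\]
Here the endpoint jumps, distortion, and bad or non-full intervals are already inside the factor $9+16\xi^{(n)}_\omega$. Condition~3 is designed to bound exactly $A^{(n)}_\omega$ by $C_\theta\theta^n$, so each block contracts by $\theta_0$ itself. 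Your mechanism is the right one: good elements handled via $\sup_P|h|\le\var_P h+\nu_{\omega,\infty}(P)^{-1}\nu_{\omega,\infty}(|h|\widehat P)$, bad ones absorbed through $\xi^{(n)}_\omega$. But as written it neither proves the inequality nor lands on the coefficient that Condition~3 controls.

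With the correct inequality, the alignment issue you flag is routine. The identity $\cL^{a}_{\sigma^{b}\omega}(\widehat{J'}\,\cL^{b}_\omega u)=\cL^{a+b}_\omega(\widehat{T^{-b}_\omega J'}\,u)$ turns $\sum_{J\in\mathcal Q_\omega}$ into nested sums over $J_1,\dots,J_k,\tilde J$. Positivity and conformality then collapse the $\nu$-terms to $\lambda_\omega^{n_\ast-in_0}$. The $6$ in $6C_2'$ is $2\cdot 3$: the $2$ comes from \eqref{eq:BV-L1}, and the $3$ from $C_2'(\var\psi_\omega+1)+C_2'\le 3C_2'\|\psi_\omega\|_{\mathrm{BV}}$ (using $\|\psi_\omega\|_{\mathrm{BV}}\ge\nu_{\omega,\infty}(\psi_\omega)=1$). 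Nothing comes from jump terms.

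Your $L^1$ step takes a different and valid route:
\[
\sum_J\|\cL^{n_\ast}_\omega(\psi_\omega\widehat J)\|_1=\Leb(\cL^{n_\ast}_\omega\psi_\omega)=\lambda^{n_\ast}_\omega\Leb(\phi_{\sigma^{n_\ast}\omega}+Q_{\omega,n_\ast}\psi_\omega)\le(C_\phi+D)\|\psi_\omega\|_{\mathrm{BV}}\lambda^{n_\ast}_\omega .
\]
This yields $C_\phi+D$, not $C_\phi^2$, so your claim that it produces $C_\phi^2$ is unfounded. The paper instead uses $\|\psi_\omega\|_\infty\Leb(X_{\omega,n_\ast-1})\le C_\phi\|\psi_\omega\|_{\mathrm{BV}}\Leb(\phi_\omega\widehat X_{\omega,n_\ast-1})$ together with $\eta_\omega(X_{\omega,n})=\lambda^n_\omega$. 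That actually gives $\lambda_-^{-1}C_\phi^2$, which is the constant used later in the proof of \eqref{eq:*2}. So neither argument gives the printed $C_\phi^2$ literally. Downstream only a constant times $\|\psi_\omega\|_{\mathrm{BV}}\lambda_\omega^{n_\ast}$ matters, and your version has the small advantage of avoiding $\lambda_-^{-1}$.
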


In order to prove Proposition \ref{proesti}, we need the following result on Lasota--Yorke type inequality.
The Lasota--Yorke estimate is obtained by first splitting the variation over the $n$-step refined partition, i.e. $\Var(\mathcal L_\omega^n f)\le \sum_{J\in\widehat{\mathcal P}^{(n)}_\omega}\Var(\mathcal L_\omega^n(\widehat J f))$, and next by estimating each term according to the partition-level control in \cite[Lemma 1.5.1]{AFGV}.
\begin{lem}
\label{LYine}	
For all $\omega\in\Omega$, all $u\in \mathrm{BV}(I)$, and all $n\in\NN$ there exist positive, measurable constants $A_{\om}^{(n)}$ and $K_{\om}^{(n)}$ such that
\begin{align*}
\sum_{J\in \widehat{\mathcal P}^{(n)}_\omega}\var(\cL_{\om}^n(\widehat Ju)) \leq A_{\om}^{(n)}\var(u)+K_{\omega, n}\nu_{\omega, \infty}(|u|), 
\end{align*}
where
\begin{align*}
A_{\om}^{(n)}:=(9+16\xi_{\om}^{(n)})\norm{((T_{\omega}^{n})^{\prime})^{-1}}_{\infty}.
\end{align*}\index{$A_{\om}^{(n)}$}

\end{lem}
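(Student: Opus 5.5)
The plan is to follow the partition-by-partition argument indicated just before the statement. Fix $\omega$, $n$ and $u\in\mathrm{BV}(I)$. Since $\mathcal L_\omega^n u=\mathcal L_{\omega,0}^n(u\widehat X_{\omega,n-1})$, only elements $J\in\widehat{\mathcal P}^{(n)}_\omega$ with $J\subseteq X_{\omega,n-1}$ contribute; for the others $\mathcal L_\omega^n(\widehat J u)=0$. So the sum runs over $J\in\mathcal P^{(n)}_{\omega,*}=\mathcal P^{(n)}_{\omega,g}\cup\mathcal P^{(n)}_{\omega,b}$. For such $J$, $T_\omega^n|_J$ is a monotone $C^2$ diffeomorphism onto the interval $T_\omega^n(J)$, and
\[
\mathcal L_\omega^n(\widehat J u)=\bigl(u\,|(T_\omega^n)'|^{-1}\bigr)\circ T^{-n}_{\omega,J}\cdot \widehat{T_\omega^n(J)}.
\]
This gives the standard one-interval bound
\[
\var(\mathcal L_\omega^n(\widehat Ju))\le \var_J\bigl(u/|(T_\omega^n)'|\bigr)+2\sup_J|u|\,\norm{((T_\omega^n)')^{-1}}_\infty .
\]
The first term is controlled by $\norm{((T_\omega^n)')^{-1}}_\infty\var_J(u)$ plus a distortion term. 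The two jump terms at the endpoints of $T_\omega^n(J)$ vanish when the image is all of $I$ (full branches). This is exactly the partition-level control of \cite[Lemma 1.5.1]{AFGV}, which I will invoke rather than rederive.

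Next, the sup terms must be converted. For $J\in\mathcal P^{(n)}_{\omega,g}$ I will use
\[
\sup_J|u|\le \var_J(u)+\frac{\nu_{\omega,\infty}(|u|\widehat J)}{\nu_{\omega,\infty}(J)}\le \var_J(u)+\delta_{\omega,n}^{-1}\nu_{\omega,\infty}(|u|\widehat J).
\]
Summing over $J$, the variations add to at most $\var(u)$, and the $\nu$-terms add to at most $\delta_{\omega,n}^{-1}\nu_{\omega,\infty}(|u|)$. The jump terms appear only for non-full images. Full branches contribute only through the distortion term, and there the $L^1$-type term can be absorbed via bounded distortion.

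For $J\in\mathcal P^{(n)}_{\omega,b}$ there is no $\nu$-mass to use, so I will bound $\sup_J|u|$ through a neighbouring good element. The plan is to walk from a bad $J$ to the nearest good element $J'$ and use
\[
\sup_J|u|\le \var_{[J,J']}(u)+\sup_{J'}|u| .
\]
Contiguous runs of non-full elements have length at most $\xi^{(n)}_\omega$. Each stretch of variation is therefore counted at most $O(\xi^{(n)}_\omega)$ times, which is where the factors $16\xi^{(n)}_\omega$ in $A^{(n)}_\omega$ and $(2\xi^{(n)}_\omega+1)$ in $K_{\omega,n}$ come from. The constants $9$ and $8$ collect the endpoint jumps (at most two per element, each counted with the doubling from $\|u\|_\infty\le\var+$mean). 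Measurability of $A^{(n)}_\omega$ and $K_{\omega,n}$ follows from the measurability of $\omega\mapsto\xi^{(n)}_\omega,\delta_{\omega,n}$ and $\|(T^n_\omega)'\|$ under Condition~1.

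The main obstacle is this bookkeeping for bad elements and full branches adjacent to them. Ensuring each variation piece is charged only boundedly many times requires the ordering argument of \cite[Lemma 1.5.1]{AFGV}. My first attempt would be to assign each maximal run of non-full elements to a single good element on one side. I would then verify that the run length is bounded by $\xi^{(n)}_\omega$, which should yield the stated constants.
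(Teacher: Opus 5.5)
Your proposal follows essentially the same route as the paper, which likewise splits the variation over $\widehat{\mathcal P}^{(n)}_\omega$ and then defers the element-by-element good/bad bookkeeping, and hence the constants $9+16\xi^{(n)}_\omega$ and $8(2\xi^{(n)}_\omega+1)$, to \cite[Lemma 1.5.1]{AFGV}. The one step you leave implicit is that a full element $J\subseteq X_{\omega,n-1}$ is automatically good: since $\mathcal L_\omega^n\widehat J\ge\|(T_\omega^n)'\|_\infty^{-1}$ on $T_\omega^n(J)=I$, we get $\nu_{\omega,\infty}(J)=(\lambda_\omega^n)^{-1}\nu_{\sigma^n\omega,\infty}(\mathcal L_\omega^n\widehat J)>0$, which confines bad elements to runs of at most $\xi^{(n)}_\omega$ non-full elements and, together with the full surviving element supplied by the Hole condition, justifies your walk to a neighbouring good element.
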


By Condition 3, we immediately obtain the following:
For all $\omega\in\Omega$, all $u\in \mathrm{BV}(I)$, and all $n\in\NN$, 
\begin{align}\label{LYine2}
\sum_{J\in \widehat{\mathcal P}^{(n)}_\omega}\var(\cL_{\om}^n(\widehat Ju))\leq C_{\theta} \theta^n \var(u)+K_{n}\nu_{\omega, \infty}(|u|).
\end{align}

\begin{proof}[Proof of Proposition \ref{proesti}]
We first note the relation that
\[
\mathcal{L}_{\sigma^{n_0}\omega}^{n_0} \bigl(\widehat J_1 \mathcal{L}_\omega^{n_0}(\psi_\omega \widehat J_2) \bigr)
= \mathcal{L}_\omega^{2n_0} \Bigl(\widehat{T_\omega^{-n_0}J_1 \cap J_2}\psi_\omega \Bigr),
\qquad J_1 \in \widehat{\mathcal{P}}_{\sigma^{n_0}\omega}^{(n_0)}, \; J_2 \in \widehat{\mathcal{P}}_\omega^{(m)}, \;
m \in \mathbb{N}.
\]
Inductively, we have that
\begin{equation*}
\mathcal{L}_{\omega}^{n_\ast}(\psi_\omega \widehat J)
= \mathcal{L}_{\sigma^{(k - 1)n_0+m}\omega}^{n_0}\Bigl(
\widehat J_1 \mathcal{L}_{\sigma^{(k-2)n_0+m}\omega}^{n_0}\bigl(
\widehat J_2 \cdots \mathcal{L}_{\sigma^{m}\omega}^{n_0}(\widehat J_k\,
\mathcal{L}_{\omega}^{m}(\widehat{\tilde{J}} \psi_\omega))\bigr)\Bigr)
\end{equation*}
for $J \in \mathcal Q_\omega$ and associated $J_1,\dots,J_k,\tilde{J}$.
We denote the right-hand side by $\mathcal{R}_{J_1,\dots,J_k,\tilde{J},\omega}(\psi_\omega)$, that is,
\begin{equation*}
\mathcal{R}_{J_1,\dots,J_k,\tilde{J},\omega}(u):= \mathcal{L}_{\sigma^{(k - 1)n_0+m}\omega}^{n_0}\Bigl(
\widehat J_1 \mathcal{L}_{\sigma^{(k-2)n_0+m}\omega}^{n_0}\bigl(
\widehat J_2 \cdots \mathcal{L}_{\sigma^{m}\omega}^{n_0}(\widehat J_k\,
\mathcal{L}_{\omega}^{m}(\widehat{\tilde{J}} u))\bigr)\Bigr).
\end{equation*}
Since
\begin{align*}
|R_J|
&\le \Leb\left(|\widetilde{G}_{\omega, N}|\,\widehat X_{\sigma^{n_{l+1}}\omega,N-n_{l+1}}\,
\left|Q_{{\sigma^{n_{\ast}}\omega},{n_{l+1}-n_{\ast}}}\mathcal{L}_\omega^{n_{\ast}}(\psi_\omega \widehat J)\right|\right)
\\
&\leq \bigl\|Q_{{\sigma^{n_{\ast}}\omega},{n_{l+1}-n_{\ast}}}(\mathcal{L}_\omega^{n_{\ast}}(\psi_\omega \widehat J))\bigr\|_{\infty}
\Leb(|\widetilde{G}_{\omega, N}|\,\widehat X_{\sigma^{n_{l+1}}\omega,N-n_{l+1}})
\end{align*}
we have, by Condition~4,
\begin{align*}
&\sum_{J \in \mathcal Q_\omega} |R_J|
\leq \sum_{J \in \mathcal Q_\omega}
\bigl\|Q_{{\sigma^{n_{\ast}}\omega},{n_{l+1}-n_{\ast}}}(\mathcal{L}_\omega^{n_{\ast}}(\psi_\omega \widehat J))\bigr\|_{\infty}
\Leb(|\widetilde{G}_{\omega, N}|\,\widehat X_{\sigma^{n_{l+1}}\omega,N-n_{l+1}})
\\
&\leq D\kappa ^{\,n_{l+1}-n_\ast}
\sum_{J \in \mathcal Q_\omega} \|
\mathcal{L}_\omega^{n_\ast}(\psi_\omega \widehat J)\|_{\mathrm{BV}}
\|g\|_{\mathrm{Lip}}\,\Leb(X_{\sigma^{n_{l+1}}\omega,N-n_{l+1}}).
\end{align*}
Using \eqref{eq:BV-L1}, we obtain
\begin{equation}\label{eq:0313}
\sum_{J \in \mathcal Q_\omega} |R_J|
\leq D\kappa ^{n_{l+1}-n_\ast}
\biggl(
\sum_{J \in \mathcal Q_\omega} 2\var\bigl(\mathcal{L}_\omega^{n_\ast}(\psi_\omega \widehat J)\bigr)
+ \sum_{J \in \mathcal Q_\omega} \|\mathcal{L}_\omega^{n_\ast}(\psi_\omega \widehat J)\|_1
\biggr)
\|g\|_{\mathrm{Lip}}\,\Leb(X_{\sigma^{n_{l+1}}\omega,N-n_{l+1}}).
\end{equation}

Since
$
\Theta
<1,
$
using \eqref{LYine2}, we can estimate
\begin{align*}
&\sum_{J\in\cQ_\omega}\mathrm{var}(\mathcal{L}_\omega^{n_\ast}(\psi_\omega \widehat J))\\
&=\sum_{J_1 \in \widehat{\mathcal{P}}_{\sigma^{(k-1)n_0 +m}\omega}^{(n_0)}} \cdots 
\sum_{\tilde{J} \in \widehat{\mathcal{P}}_\omega^{(m)}}
\var\left( \mathcal{R}_{J_1,\dots,J_k,\tilde{J},\omega}(\psi_\omega) \right)
\\
&\leq 
\sum_{J_2 \in \widehat{\mathcal{P}}_{\sigma^{(k-2)n_0 +m}\omega}^{(n_0)}} \cdots 
\sum_{\tilde{J} \in \widehat{\mathcal{P}}_\omega^{(m)}}
\theta_{0}\var\left(
\mathcal{L}_{\sigma^{(k-2)n_0+m}\omega}^{n_0}\bigl(
\widehat J_2 \cdots \mathcal{L}_{\sigma^{m}\omega}^{n_0}(\widehat J_k\,
\mathcal{L}_{\omega}^{m}(\widehat{\tilde{J}} \psi_\omega))\bigr)
\right) \\
&\qquad+ K_{n_0}\nu_{\sigma^{(k-1)n_0 +m}\omega, \infty}(\mathcal{L}_\omega^{n_{\ast}-n_0}\psi_\omega)
\\
&\leq 
\sum_{J_3 \in \widehat{\mathcal{P}}_{\sigma^{(k-3)n_0 +m}\omega}^{(n_0)}} \cdots 
\sum_{\tilde{J} \in \widehat{\mathcal{P}}_\omega^{(m)}}
\theta_{0}^2\var\left(
\mathcal{L}_{\sigma^{(k-3)n_0+m}\omega}^{n_0}\bigl(
\widehat J_3 \cdots \mathcal{L}_{\sigma^{m}\omega}^{n_0}(\widehat J_k\,
\mathcal{L}_{\omega}^{m}(\widehat{\tilde{J}} \psi_\omega))\bigr)
\right) \\
&\qquad + 
\theta_0 K_{n_0}\,\nu_{\sigma^{(k-2)n_0 +m}\omega, \infty}(\mathcal{L}_\omega^{n_{\ast}-2n_0}\psi_\omega)
+ K_{n_0}\lambda^{n_{\ast}-n_0}_{\omega}
\\
&\leq \theta_{0}^k \var\bigl(\mathcal{L}_\omega^m \psi_\omega\bigr)
+ K_{n_0}\Bigl(
\lambda_\omega^{n_\ast-n_0} + \theta_{0}\lambda_\omega^{n_\ast-2n_0}
+ \cdots + \theta_{0}^{k-1}\lambda_\omega^{n_\ast-kn_0}\Bigr),
\\
&\leq \theta_{0}^k \var\bigl(\mathcal{L}_\omega^m \psi_\omega\bigr)
+ K_{n_0}\lambda_\omega^{\,n_\ast-n_0}
\sum_{i=0}^{k-1}\Theta^i.
\end{align*}
By \eqref{LYine2} and the fact that $\Theta
<1$, 
this implies that
\begin{align*}
\notag\sum_{J\in\cQ_\omega}\mathrm{var}(\mathcal{L}_\omega^{n_\ast}(\psi_\omega \widehat J))\notag
\le\theta_{0}^k
\bigl(C_\theta \theta^{m} \var(\psi_\omega) + K_{m} \bigr)
+ K_{n_0}\lambda_\omega^{n_\ast-n_0}(1-\Theta)^{-1}.
\end{align*}
On the other hand, since 
$(\theta_0^{1/n_0})^{n_\ast}
=\theta_0^k\theta_0^{m/n_0}
\le
\lambda_\omega^{kn_0}\lambda_{\sigma^{kn_0}\omega}^{m}
=
\lambda_\omega^{n_\ast}
$
we have
\[
\theta_0^k C_\theta\theta^m
=
\bigl(C_\theta\theta^m\theta_0^{-m/n_0}\bigr)(\theta_0^{1/n_0})^{n_\ast}
=
C_\theta^{\,1-m/n_0}(\theta_0^{1/n_0})^{n_\ast}
\le
\max\{1,C_\theta\}
\lambda_\omega^{n_\ast}
\]
and
\[
\theta_0^kK_m
=
K_m\theta_0^{-m/n_0}(\theta_0^{1/n_0})^{n_\ast}
\le K_m\theta_0^{-m/n_0}\lambda_\omega^{n_\ast}.
\]
Therefore, 
\begin{align}\label{eq:Lest0116}
\notag
&\sum_{J\in\cQ_\omega}\mathrm{var}(\mathcal{L}_\omega^{n_\ast}(\psi_\omega \widehat J))\notag\\
&\leq 
C_2'
\lambda_\omega^{n_\ast}
(\var(\psi_\omega)+1)
+ K_{n_0}(1-\Theta)^{-1}
\lambda_-^{-n_0}\lambda_\omega^{n_\ast}
\notag\\
&\leq
3C_2'\|\psi_\omega\|_{\mathrm{BV}}\lambda_\omega^{n_\ast}.
\end{align}
Here we used $1=\nu_{\omega, \infty}(\psi_\omega)\le \|\psi_\omega\|_{\mathrm{BV}}$.
Furthermore,
\begin{align}\label{eq:Lest0116b}
&\sum_{J \in \mathcal Q_\omega} \notag
\bigl\|\mathcal{L}_\omega^{\,n_\ast}(\psi_\omega \widehat J)\bigr\|_1
\leq \sum_{J \in \mathcal Q_\omega}  \|\psi_\omega\| _\infty
\Leb\bigl(X_{\omega,n_\ast-1}\cap J\bigr)
\\
&
\leq \|\psi_\omega\|_{BV} C_\phi 
\Leb\bigl(\phi_\omega \widehat{X}_{\omega,n_\ast-1}\bigr)
\le \lambda_-^{-1}C_\phi^2
\|\psi_\omega\|_{BV}\lambda_\omega^{\,n_\ast},
\end{align}
where we used $\Leb(\phi_\omega \widehat{X}_{\omega,m})/\Leb(\phi_\omega \widehat{X}_{\omega,0})=\eta_\omega(X_{\omega,m})=\lambda_\omega^{m}$ (cf.~\cite{AFGV}).
The conclusion immediately follows from \eqref{eq:0313}, \eqref{eq:Lest0116} and \eqref{eq:Lest0116b}.
\end{proof}

\subsection{Final estimates for \eqref{eq:*1}}\label{s:fcb-final-estimates}
We now complete the proof of \eqref{eq:*1}.
Set
\begin{equation}\label{eq:0313b2}
C_2(\omega):=
2C_\phi D\bigl(6C_2'+C_\phi^2\bigr)\|\psi_\omega\|_{\mathrm{BV}},
\qquad
C_3(\omega):=
2C_\phi^3D
\|\psi_\omega\|_{\mathrm{BV}},
\end{equation}
and
\begin{equation*}
m_\omega:=\inf\left\{n\in\mathbb N :  C_\phi D\|\psi_\omega\|_{\mathrm{BV}}\kappa^n 
\le \frac12\right\}.
\end{equation*}

Assume first that \(n_{l+1}-n_\ast\ge m_\omega\). Then, by Lemma \ref{proesti2},
\begin{align*}
\frac{|R_0|}{|L_0|}
\le
\frac{
D\kappa^{n_{l+1}-n_\ast}\lambda_\omega^{n_\ast}\|\psi_\omega\|_{\mathrm{BV}}
\Leb(X_{\sigma^{n_{l+1}}\omega,N-n_{l+1}})
}{
C_\phi^{-1}\Leb(X_{\sigma^{n_{l+1}}\omega,N-n_{l+1}})
\lambda_\omega^{n_\ast}
}
=
C_\phi D\|\psi_\omega\|_{\mathrm{BV}}\kappa^{n_{l+1}-n_\ast}
\le \frac12.
\end{align*}
Hence,
$
|L_0+R_0|\ge |L_0|-|R_0|\ge \frac{|L_0|}{2}.
$
Therefore, it follows from Lemma \ref{proesti2} and Proposition \ref{proesti} that
\begin{align*}
\frac{1}{|L_0+R_0|}\sum_{J\in\mathcal Q_\omega}|R_J|
&\le
\frac{2}{|L_0|}\sum_{J\in\mathcal Q_\omega}|R_J|\\
&\le
\frac{2
\bigl(6C_2'+C_\phi^2\bigr)\|\psi_\omega\|_{\mathrm{BV}}
D\kappa^{\,n_{l+1}-n_\ast}\lambda_\omega^{n_\ast}\|g\|_{\mathrm{Lip}}
\Leb(X_{\sigma^{n_{l+1}}\omega,N-n_{l+1}})
}{
C_\phi^{-1}
\Leb(X_{\sigma^{n_{l+1}}\omega,N-n_{l+1}})
\lambda_\omega^{n_\ast}
}\\
&=
C_2(\omega)
\|g\|_{\mathrm{Lip}}\kappa^{\,n_{l+1}-n_\ast},
\end{align*}
and that
\begin{align*}
&\frac{|R_0|}{|L_0+R_0||L_0|}\sum_{J\in\mathcal Q_\omega}|L_J|\\
&\le
\frac{2|R_0|}{|L_0|^2}\sum_{J\in\mathcal Q_\omega}|L_J|\\
&\le
\frac{2
D\kappa^{\,n_{l+1}-n_\ast}\lambda_\omega^{n_\ast}\|\psi_\omega\|_{\mathrm{BV}}
\Leb(X_{\sigma^{n_{l+1}}\omega,N-n_{l+1}})
}{
\bigl(
C_\phi^{-1}
\Leb(X_{\sigma^{n_{l+1}}\omega,N-n_{l+1}})
\lambda_\omega^{n_\ast}
\bigr)^2
}
C_\phi\|g\|_{\mathrm{Lip}}
\Leb(X_{\sigma^{n_{l+1}}\omega,N-n_{l+1}})
\lambda_\omega^{n_\ast}\\
&=
C_3(\omega)
\|g\|_{\mathrm{Lip}}\kappa^{\,n_{l+1}-n_\ast}.
\end{align*}
Hence, since $\kappa\in (0,1)$, 
it follows from \eqref{eq:0313b} that the left-hand side of \eqref{eq:*1} is bounded by
\begin{equation*}
(C_1(\omega)+C_2(\omega)+C_3(\omega))
\|g\|_{\mathrm{Lip}}\kappa^{n_{l+1}-n_\ast}.
\end{equation*}

We next consider the case \(n_{l+1}-n_\ast<m_\omega\).
Since
\[
\sum_{J\in\mathcal Q_\omega}(I)_J
=
\sum_{J\in\mathcal Q_\omega}\int_J G_{\omega,N}(c_J,x)\,d\zeta_{\omega,N}(x),
\quad
\sum_{J\in\mathcal Q_\omega}(II)_J
=
\sum_{J\in\mathcal Q_\omega}\zeta_{\omega,\infty}(J)\int G_{\omega,N}(c_J,y)\,d\zeta_{\omega,N}(y),
\]
the left-hand side of \eqref{eq:*1} is bounded by
\[
2\|g\|_\infty\le 2\|g\|_{\mathrm{Lip}}\le
2\kappa^{-m_\omega}\|g\|_{\mathrm{Lip}}\kappa^{\,n_{l+1}-n_\ast}.
\]
We now define
\begin{equation}\label{eq:0313b3}
C_4(\omega):=2\kappa^{-m_\omega}.
\end{equation}
Then we immediately conclude \eqref{eq:*1}.

\subsection{Proof of \eqref{eq:*2}}\label{s:fcb-proof-eq2}

We first note that the left-hand side of \eqref{eq:*2} is bounded by
\[
\|g\|_{\mathrm{Lip}} \sum_{J\in\cQ_\omega} \left|\zeta_{\omega,N}(J) - \zeta_{\omega,\infty}(J)\right|.
\]
Thus it suffices to estimate
\[
\sum_{J\in\cQ_\omega} \left|\zeta_{\omega,N}(J) - \zeta_{\omega,\infty}(J)\right|.
\]

Fix \(J\in\cQ_\omega\).
Since
\[
\Leb\big(\widehat X_{\sigma^N\omega, 0}\tcL_\omega^{\,N}(u)\big)
=
\Leb\big(u\widehat X_{\omega,N}\big)
\]
for any \(u\in \BV(I)\), we obtain
\[
\zeta_{\omega,N}(J)
=
\frac{\Leb\big(\widehat X_{\sigma^N\omega,0}\tcL_\omega^{\,N}(\widehat J\psi_\omega)\big)}
{\Leb\big(\widehat X_{\sigma^N\omega,0}\tcL_\omega^{\,N}(\psi_\omega)\big)}.
\]
According to the spectral decomposition
$
\tcL_\omega^N(\psi_\omega)=\phi_{\sigma^N\omega}+Q_{\omega,N}(\psi_\omega)
$ in Theorem \ref{lem0916} (recall that $\nu_{\omega,\infty}(\psi_\omega)=1$),
we set
\begin{align*}
&A_0 := \Leb(\widehat X_{\sigma^N\omega,0}\phi_{\sigma^N\omega}),\quad
B_0 := \Leb(\widehat X_{\sigma^N\omega,0}Q_{\omega,N}(\psi_\omega)),\\
&A_J := \nu_{\omega,\infty}(\widehat J\psi_\omega)=\zeta_{\omega,\infty}(J),\quad
B_J := \Leb(\widehat X_{\sigma^N\omega,0}Q_{\omega,N}(\widehat J\psi_\omega)).
\end{align*}
Then
\[
|\zeta_{\omega,N}(J)-\zeta_{\omega,\infty}(J)|
=
\left|\frac{A_JA_0+B_J}{A_0+B_0}-A_J\right|
\le
\frac{1}{|A_0+B_0|}|B_J|
+\frac{|B_0|}{|A_0+B_0|}|A_J|.
\]

By Condition~4,
\begin{align}\label{eq:s54-B0}
\begin{split}
&|A_0|\ge C_\phi^{-1},\\
&|B_0|
\le
\|Q_{\omega,N}(\psi_\omega)\|_\infty
\le
D\|\psi_\omega\|_{\BV}\kappa^N.
\end{split}
\end{align}
Notice also that
\[
\sum_{J\in\cQ_\omega}|A_J|
=
\sum_{J\in\cQ_\omega}\nu_{\omega,\infty}(\widehat J\psi_\omega)
=
\nu_{\omega,\infty}(\psi_\omega)
=
1.
\]
On the other hand,
since
\[
Q_{\omega,N}
=
\lambda_\omega^{-n_\ast}Q_{\sigma^{n_\ast}\omega,N-n_\ast}\circ \mathcal L_\omega^{\,n_\ast},
\]
Condition~4 implies
\begin{align*}
\sum_{J\in\cQ_\omega}|B_J|
&\le
\sum_{J\in\cQ_\omega}\|Q_{\omega,N}(\widehat J\psi_\omega)\|_\infty\\
&\le
D\kappa^{N-n_\ast}\lambda_\omega^{-n_\ast}
\sum_{J\in\cQ_\omega}\bigl\|\cL_\omega^{\,n_\ast}(\widehat J\psi_\omega)\bigr\|_{\BV}\\
&\le
D\kappa^{N-n_\ast}\lambda_\omega^{-n_\ast}
\left(
2\sum_{J\in\cQ_\omega}\var(\cL_\omega^{n_\ast}(\psi_\omega \widehat J))
+
\sum_{J\in\cQ_\omega}\bigl\|\cL_\omega^{\,n_\ast}(\psi_\omega \widehat J)\bigr\|_1
\right),
\end{align*}
where we used \eqref{eq:BV-L1}.
By \eqref{eq:Lest0116}, \eqref{eq:Lest0116b} and the fact that $\lambda_\omega \in [0,1]$, this yields
\begin{align*}
\sum_{J\in\cQ_\omega}|B_J|
&\le
D\bigl(
6C_2'
+
\lambda_-^{-1}C_\phi^2
\bigr)\|\psi_\omega\|_{\BV}\kappa^{N-n_\ast}.
\end{align*}

We first consider the case \(N\ge m_\omega\).
Then \(|B_0|\le A_0/2\), so \(|A_0+B_0|\ge A_0/2\).
Hence, by the above estimates, the left-hand side of \eqref{eq:*2} is bounded by
\[
2\left(\frac{1}{A_0} \sum_{J\in\cQ_\omega}|B_J|+\frac{|B_0|}{A_0}\sum_{J\in\cQ_\omega}|A_J|\right)\|g\|_{\mathrm{Lip}}
\le C_5(\omega)\|g\|_{\mathrm{Lip}}\kappa^{N-n_\ast}\le C_5(\omega)\|g\|_{\mathrm{Lip}}\kappa^{n_{l+1}-n_\ast},
\]
where
\begin{equation}\label{eq:0314}
C_5(\omega):= 2C_\phi D\bigl(6C_2'+\lambda_-^{-1}C_\phi^2+1\bigr)\|\psi_\omega\|_{\BV}.
\end{equation}

We next consider the case \(N<m_\omega\).
Since \(\zeta_{\omega,N}\) and \(\zeta_{\omega,\infty}\) are probability measures, we have
\[
\sum_{J\in\cQ_\omega}\left|\zeta_{\omega,N}(J)-\zeta_{\omega,\infty}(J)\right|\le 2
\le
2\kappa ^{-m_\omega}\kappa ^{N}
\le
2\kappa ^{-m_\omega}\kappa ^{n_{l+1}-n_*}.
\]
Hence \eqref{eq:*2} immediately follows.

\section{Proof of Theorem \ref{CLT}}\label{s:abst}

In this section, we formulate an abstract normal approximation theorem,
Theorem \ref{thm:clt_tikhomirov}, which we use to prove Theorem \ref{CLT}.
Apart from the estimate on the Kolmogorov distance, proved in
Section \ref{s:5.4}, the result is a  straightforward consequence of
\cite{LS20,LNN25}. 
We first introduce in Section \ref{s:abstr} the definitions
required to state Theorem \ref{thm:clt_tikhomirov}, and then present the
theorem itself in Section \ref{s:5.2}. 
Next, after a preparation in Section \ref{ss:cl}, we provide an estimate on the dynamical
variance in Section \ref{s:5.3}, which together with Theorem \ref{FCB} is used to deduce the error
bounds stated in Theorem \ref{CLT}.

\subsection{Barbour's class of smooth test functions}\label{s:abstr}
Recall that $D$ is the space of all c\`{a}dl\`{a}g functions $w : [0,1] \to \mathbb R$ equipped with the sup norm $\Vert w \Vert_\infty = \sup_{ t \in [0,1] } | w(t)|$.
Given a function $g : D \to \mathbb R$, by $g^{(k)}$ we mean the $k$-th Fr\'{e}chet derivative of $g$, which is 
a map 
$g^{(k)} : D \to \cL( D^k , \mathbb R )$ from $D$ to the space
$\cL( D^k, \mathbb R )$ of all continuous 
multilinear maps from $D^k$ to $\mathbb R$. The $k$-linear norm of $A \in \cL( D^k, \mathbb R )$
is defined by
$$
\Vert A \Vert = \sup_{ \Vert w_i \Vert_\infty \le 1 \: \forall i = 1,\ldots, k } | A[w_1,\ldots, w_k] |,
$$
where 
$A[w_1,\ldots, w_k]$ denotes $A$ applied to the arguments $w_1,\ldots, w_k \in D$.
Let $\sL$ be the Banach space of all continuous functions $g : D \to \mathbb R$ for which the norm  
$$
\Vert g \Vert_{\sL} := \sup_{w \in D} \frac{ | g(w) | }{  1 + \Vert w \Vert_\infty^3 }
$$
is finite. 
For a twice Fr\'{e}chet differentiable functions $g:D\to\mathbb R$, we define
\begin{align}\label{eq:norm_m0}
	\begin{split}
		\Vert g \Vert_\sM &= \sup_{w \in D} \frac{ | g(w) | }{ 1 + \Vert w \Vert_\infty^3 } +  
		\sup_{w \in D} \frac{  \Vert  g' (w) \Vert   }{ 1 + \Vert w \Vert_\infty^2 }  + 
		\sup_{w \in D} \frac{  \Vert  g'' (w) \Vert   }{ 1 + \Vert w \Vert_\infty } \\
		&+ 
		\sup_{w,h \in D}  \frac{ \Vert g''(w + h) - g''(w)  \Vert  }{ \Vert h \Vert_\infty }.
		\end{split}
	\end{align}
We write $\sM$ for the collection of all
twice Fr\'{e}chet differentiable functions
$g:D\to\mathbb R$ such that
\begin{align*}
\sup_{w,h \in D, \, h \neq 0} \frac{\Vert g''(w + h) -  g''(w)   \Vert}{  \Vert h \Vert_\infty } < \infty.
\end{align*}
Then \eqref{eq:norm_m0} defines a norm on $\sM$.
Let $\sM_0 \subset \sM$ be the subcollection of 
functions that satisfy
\begin{align}\label{eq:smooth}
	\sup_{w \in D}	| g''(w)[J_r, J_s - J_t] |   
	\le C_0 \Vert g \Vert_{\sM } |t-s|^{1/2} \quad \forall r,s,t \in [0,1],
\end{align}
where 
\begin{align}\label{eq:Jt}
	J_{ \alpha }(t) = \begin{cases}
		1, & \text{if $t \ge \alpha$}, \\ 
		0, & \text{if $t < \alpha$}
	\end{cases}.
\end{align}
We refer the reader to \cite{B90, BRZ24} for 
background information on the class 
$\sM_0$. We define
$d_{\cS}(\cdot, \cdot)$ as in \eqref{eq:smooth_dist}.

\subsection{Normal approximations under FCB}\label{s:5.2}

For $N \ge 1$, let $\{\tilde Y_n \}_{0\le n<N}$ be real-valued random variables on 
a probability space $(M, \cF, \mu)$ such that
\begin{align}\label{eq:X_j}
	\mu(\tilde Y_n) = 0 \quad \text{and} \quad  \Vert \tilde Y_n \Vert_\infty \le L \qquad \forall n = 0, \ldots, N - 1,
\end{align}
where $L \ge 1$.

\begin{df}
We say that $\{\tilde Y_n \}_{0\le n<N}$ satisfies the \emph{functional correlation bound} (FCB) with
a rate function $R : \{1,2,\ldots\} \to \RR_+$ and a constant $C_* \ge 1$ if for any $k \ge 2$, any separately Lipschitz continuous function $g : [-L,L]^k \to \RR$, and any integers $1 \le \ell < k$ and $0 \le n_1 < \cdots < n_k < N$,
\begin{align}\label{eq:fcb}
	&\left| \int   G(x,x)   \, d\mu(x) - \iint G(x,y) \, d\mu(x) \, d\mu(y) \right| \le C_*  \Vert g \Vert_{ \text{Lip}  } R( n_{\ell + 1} - n_\ell), \tag{$\text{FCB}$}
\end{align}
where
\begin{align}\label{eq:G}
	G(x,y) = g( \tilde Y_{n_1}(x), \ldots, \tilde Y_{n_\ell}(x), \tilde Y_{n_{\ell+1}}(y), \ldots, \tilde Y_{n_k}(y)  ).
\end{align}
\end{df}

\begin{rem}\label{rem:fcb} If \eqref{eq:fcb}
holds and $g : [-L, L]^k \to \CC$ is a separately Lipschitz continuous 
complex-valued
function, then for $G$ defined as in \eqref{eq:G} we have 
\begin{align*}
	&\left| \int   G(x,x)   \, d \mu (x) - \iint G(x,y) \, d \mu (x) \, d \mu (y) \right| \le 2C_*  \Vert g \Vert_{ \text{Lip}  } R( n_{\ell + 1} - n_\ell).
\end{align*}
Henceforth, we apply (FCB) also to complex-valued functions $g$ without explicitly mentioning this extension. Moreover,  suppose that $0 \le n_1 <  \ldots < n_k < N$, and $1 < \ell_1 < \cdots < \ell_p < k$ are integers, $k \ge 2$, 
and that $g : [-L, L]^k \to \CC$ is  separately Lipschitz continuous. Define
\begin{align*}
	&G(x_1, \ldots, x_{p+1}) \\
	&= g( \tilde Y_{n_1}(x_1), \ldots , \tilde Y_{n_{ \ell_1 } }(x_1),   \tilde Y_{n_{ \ell_1 + 1 } }(x_2), \ldots, 
    \tilde Y_{n_{ \ell_2 } }(x_2 ), 
	\ldots, \tilde Y_{  n_{ \ell_p + 1 } } (x_{p+1}), \ldots \tilde Y_{n_k}(x_{p+1}) ).
\end{align*}
Then, (FCB) implies the following upper bound (see \cite[Proposition 2.8]{LNN25}).
\begin{align}\label{eq:fcb_many_gaps}
	\begin{split}
		&\biggl| \int G(x,\ldots, x) \, d \mu(x) - \int\cdots\int G(x_1, \ldots, x_{p+1}) \, d \mu(x_1) \ldots   \, d \mu(x_{p+1}) \biggr| \\
		&\le 2 C_* \Vert g \Vert_{ \text{Lip} } \sum_{j=1}^p  R( n_{ \ell_{ j } + 1 } - n_{ \ell_j } ). 
	\end{split}
	\tag{$\text{FCB'}$}
\end{align}
\end{rem}

For $N \ge 1$ and $0 \le n < N$, and $t \in [0,1]$, we define
\begin{align}\label{eq:defs}
Y_n = \sigma_N^{-1}\tilde Y_n , \quad W = \sum_{n=0}^{N - 1} Y_n,\quad
	\mathscr W(t) =   \sum_{n = 0}^{  N - 1  } 
	J_{ \sigma_{N,n}^2 / \sigma_N^2 }(t)
	Y_n,
\end{align}
where 
\begin{align}\label{eq:sigmas}
\sigma_{N, n}^2 = \sum_{i=0}^{n-1} \sum_{j=0}^{N-1} \mu( \tilde Y_i \tilde Y_j ), \quad 
\sigma_N^2 = \sigma_{N,N}^2 = 
\mu\biggl[ \biggl(\sum_{n=0}^{N - 1} \tilde Y_n \biggr)^2 \biggr].
\end{align}
We assume that $\sigma_N > 0$.
Recall that $Z$ is a standard normal random variable and $\mathscr Z$ is a standard Brownian motion.

\begin{thm}\label{thm:clt_tikhomirov} Assume that $\{\tilde Y_n \}_{0\le n<N}$ satisfies \eqref{eq:fcb} with rate function $R$ and constant $C_*$.
Set $\hat{R}(n) = \sup_{\ell \ge n} R(\ell)$ for $n \ge 1$, $\hat{R}(0) = 1$, and 
	$$
	\hat{R}_1(N) = \sum_{n=0}^{N-1} \hat{R}(n), \quad \hat{R}_2(N) = \sum_{n=0}^{N-1} n^{1/2} \hat{R}(n), \quad 
	\hat{R}_3(N) = \sum_{n=0}^{N-1} n \hat{R}(n).
	$$
    Then, there exists an absolute constant $C > 0$ such that the following upper bounds hold.
\begin{itemize}
    \item[(i)] The Wasserstein distance satisfies
	\begin{align}\label{eq:ub_w1}
		d_{\mathcal W}(W,Z) \le C C_* L^5  \sigma_N^{-3} N (1 + \hat{R}_3(N)).
	\end{align}
   \item[(ii)] For any integer $K$ satisfying
	$$
	2 + \frac{1}{2} \log (N) /\log(2)  < K \le N,
	$$
	the Kolmogorov distance satisfies
	\begin{align}\label{eq:clt_tikhomirov}
	\begin{split}
		d_\cK(W, Z) 
		&\le 
        C L^2  \sigma_N^{-2} N \sum_{j=K}^{N-1} R(j) + C C_* L^3 K^2 N \sigma_N^{-3} 
	+ C C_* L N \sigma_N^{-1} \hat{R}(K) \\
	&+  C \sqrt{N} \sigma_N^{-2}  L^2 K
	+  C \sqrt{C_*} \sqrt{N} \sigma_N^{-2} L^2 K^{3/2} \biggl\{ 1 + \sqrt{\hat{R}_1(N)} \biggr\} + C LK \sigma_N^{-1}.
	\end{split}
	\end{align}
       \item[(iii)] The integral distance of smooth test functions satisfies
	\begin{align}\label{eq:error_fclt}
		d_{\cS }(\mathscr W,\mathscr Z) \le C  C_{N}  \sigma_N^{-3} N ,
	\end{align}
	where
	\begin{align*}
		C_{N} = (L+1)^3 \biggl\{  1 + ( C_*  + 1)  \hat{R}_3(N)
		+ C_0C_*^{3/2}    \hat{R}_1(N) ^{1/2}( 1 +  \hat{R}_2(N)) \biggr\}.
	\end{align*}
	Moreover, if $\{\tilde Y_n \}_{0\le n<N}$ satisfies (FCB) for all $N \ge 1$ with 
	constant $C_*$ and 
	rate function 
	$R$ such that
	\begin{align}\label{eq:rates_R}
	\sum_{n = 1}^\infty nR(n) < \infty, \quad \sum_{n=1}^\infty \sqrt{R(n)} < \infty,
	\end{align}
	and if there exists $\Sigma^2 > 0$ such that 
	\begin{align}\label{eq:conv_var}
	\lim_{N\to \infty} \frac{\sigma_N^{2}}{N} = \Sigma^2,
	\end{align}
	then $\mathscr W$ converges weakly to $\mathscr Z$ in the Skorokhod topology as $N \to \infty$.
    \end{itemize}
\end{thm}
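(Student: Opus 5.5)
Parts (i) and (iii) need no new work; part (ii) is the new ingredient, and I would prove it by Tikhomirov's characteristic-function method. For (i), the hypotheses \eqref{eq:X_j} and \eqref{eq:fcb} are exactly those of the Wasserstein bound for sums satisfying FCB in \cite{LS20}, so \eqref{eq:ub_w1} should follow by quoting that result with the normalisation \eqref{eq:defs}. For (iii), and for the weak convergence statement under \eqref{eq:rates_R} and \eqref{eq:conv_var}, I would cite the functional normal approximation theorem of \cite{LNN25} for Barbour's class $\sM_0$. The only thing to check there is that its constant is $C_N$, using \eqref{eq:fcb_many_gaps} from Remark~\ref{rem:fcb} with complex-valued $g$.

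For (ii), let $\varphi(t)=\mu(e^{itW})$ and fix a block length $K$. For each $n$ set $\Delta_n=\sum_{|m-n|<K}Y_m$ and $W^{(n)}=W-\Delta_n$. Then
\[
\varphi'(t)=i\sum_n\mu\bigl(Y_ne^{itW}\bigr),
\]
and I would Taylor expand $e^{itW}=e^{itW^{(n)}}\bigl(1+it\Delta_n+O(t^2\Delta_n^2)\bigr)$. This produces three kinds of terms, handled as follows.
\begin{itemize}
\item The term $\mu(Y_ne^{itW^{(n)}})$: $Y_n$ is separated from every summand of $W^{(n)}$ by a gap of at least $K$. Applying \eqref{eq:fcb_many_gaps} with the separately Lipschitz function $g(y_0,\dots)=y_0e^{it(\cdots)}$, whose Lipschitz constant is $O(1+|t|L\sigma_N^{-1})$, and using $\mu(Y_n)=0$, this term is $O(C_*\hat R(K))$ after normalisation. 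Summing over $n$ gives the contribution $CC_*LN\sigma_N^{-1}\hat R(K)$.
\item The term $it\,\mu(Y_n\Delta_ne^{itW^{(n)}})$: FCB again decorrelates the local product $Y_n\Delta_n$ from $W^{(n)}$, so it is approximately $it\,\mu(Y_n\Delta_n)\,\mu(e^{itW^{(n)}})$. Since $\sum_n\mu(Y_n\Delta_n)=1-\sum_{|m-n|\ge K}\mu(Y_nY_m)$, the tail correlations give the term $CL^2\sigma_N^{-2}N\sum_{j\ge K}R(j)$.
\item The quadratic remainder $O\bigl(t^2\sum_n\mu(|Y_n|\Delta_n^2)\bigr)$: this is bounded by $C C_* L^3K^2N\sigma_N^{-3}$, with the fluctuation of $\sum_nY_n\Delta_n$ around its mean estimated in $L^2$ by FCB. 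That $L^2$ estimate is what yields the $\sqrt N\sigma_N^{-2}L^2K^{3/2}(1+\sqrt{\hat R_1(N)})$ term.
\end{itemize}

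The step I expect to be the main obstacle is replacing $\mu(e^{itW^{(n)}})$ by $\varphi(t)$ with an error that is \emph{multiplicative} in $\varphi$, rather than additive. An additive error of size $|t|\,\mu|\Delta_n|\le 2|t|KL/\sigma_N$ is too crude for large $|t|$. Here I would use Tikhomirov's recursion. Write $\mu(e^{itW^{(n)}})=\varphi(t)-it\,\mu(\Delta_ne^{itW^{(n)}})+\dots$, and treat $\mu(\Delta_ne^{itW^{(n)}})$ by removing a further block of length $K$ around $\Delta_n$ and applying FCB again. Repeating this about $\log_2 N$ times, each iteration should gain a factor $\tfrac12$ once $|t|\le T:=c\,\sigma_N/(LK)$; the condition $K>2+\tfrac12\log N/\log 2$ is there so that the blocks nest consistently for that many iterations. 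The outcome should be the differential inequality
\[
\varphi'(t)=-t\varphi(t)(1+\varepsilon_1)+\varepsilon_2(t)\quad\text{for }|t|\le T,
\]
with $\varepsilon_1,\varepsilon_2$ collecting the error terms above. Integrating it gives
\[
|\varphi(t)-e^{-t^2/2}|\lesssim(\varepsilon_1+|t|^{-1}\varepsilon_2)\bigl(|t|+|t|^3\bigr)e^{-t^2/4}+\dots .
\]
Finally, Esseen's smoothing inequality
\[
d_\cK(W,Z)\lesssim\int_{-T}^T\frac{|\varphi(t)-e^{-t^2/2}|}{|t|}\,dt+T^{-1}
\]
turns the $T^{-1}$ term into $CLK\sigma_N^{-1}$ and collects the remaining terms into \eqref{eq:clt_tikhomirov}.
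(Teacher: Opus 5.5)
Your outline for (ii) has the right ingredients: Tikhomirov's method, nesting depth of order $\log_2N$, $|t|\lesssim\sigma_N/(LK)$, and Esseen. But the step that actually produces the rate is in the wrong place, and as written the argument fails.

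\textbf{The additive quadratic remainder.} In your expansion $e^{itW}=e^{itW^{(n)}}(1+it\Delta_n+O(t^2\Delta_n^2))$, the remainder $O\bigl(t^2\sum_n\mu(|Y_n|\Delta_n^2)\bigr)$ enters $\varphi'$ additively; it is not multiplied by $\varphi(t)$. Your integrated bound then damps additive errors by $e^{-t^2/4}$, and that is false. For $f'=-tf+\theta b^{(2)}t^2$, the perturbation $e^{-t^2/2}\int_0^tb^{(2)}u^2e^{u^2/2}\,du$ is of size $b^{(2)}|t|$ for large $|t|$ (Lemma \ref{lem:sunklodas_2}). After Esseen it therefore costs $b^{(2)}T$. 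Here $b^{(2)}$ is of order $NL^3K^2\sigma_N^{-3}$ and $T\asymp\sigma_N/(LK)$, so the cost is of order $NL^2K\sigma_N^{-2}$, which is of order $K$ when $\sigma_N^2\asymp N$.

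The paper's nesting exists precisely to remove this. One writes $e^{itW_j^{(r-1)}}=e^{itW_j^{(r)}}(1+\xi_j^{(r)})$ with blocks $\{|n-j|\le K\ell\}$ and iterates to depth $k\ge 2+\tfrac12\log_2N$ (Lemma \ref{lem:sunklodas_1}). Every second-order quantity is then of one of two kinds:
\begin{itemize}
\item a deterministic coefficient times $\varphi(t)$ (the Taylor error in $E_1=\sum_ja_j^{(1)}$, and $E_2$), which gives $a^{(2)}\asymp C_*L^3K^2N\sigma_N^{-3}$ and is harmless after integration;
\item the last remainder $E_4$. Since $|t|\sigma_N^{-1}2LK\le\tfrac12$ on $|t|\le T_1$, the factor $2^{-(k-2)}\le N^{-1/2}$ shrinks $E_4$ to $t^2\sqrt N\sigma_N^{-3}L^3K^2$, which is the $\sqrt N\sigma_N^{-2}L^2K$ term.
\end{itemize}
So the $\log_2N$ depth is not there to make the replacement of $\mu(e^{itW^{(n)}})$ by $\varphi(t)$ multiplicative. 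The lower bound on $K$ only guarantees a depth $k$ with $2+\tfrac12\log_2N\le k\le K$; the inequality $k\le K$ is used in the H\"older step for $E_3$.

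\textbf{The replacement of $\mu(e^{itW^{(n)}})$ by $\varphi(t)$.} The paper does this exactly, not recursively. Since $e^{itW_j^{(r)}}=e^{itW}(1+\eta_j^{(r)})$, one has $\mu(e^{itW_j^{(r)}})=\mu(1+\eta_j^{(r)})\varphi(t)+\mu(\bar\eta_j^{(r)}e^{itW})$, where $\bar\eta_j^{(r)}=\eta_j^{(r)}-\mu(\eta_j^{(r)})$. The additive piece is never estimated for a single $j$. It is summed against the deterministic weights $a_j^{(r-1)}$ and bounded by $\|\sum_ja_j^{(r-1)}\bar\eta_j^{(r)}\|_{L^2(\mu)}$ via the covariance bound (A2). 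This averaging over $j$ gains a factor $\sqrt N$ and produces the term $\sqrt{C_*}\sqrt N\sigma_N^{-2}L^2K^{3/2}(1+\sqrt{\hat R_1(N)})$. No $L^2$ fluctuation of $\sum_nY_n\Delta_n$ enters. Your per-$n$ recursion is too vague to check, because each level regenerates terms $\mu(e^{itW^{(n,r)}})$ that need the same treatment.

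\textbf{Your second bullet.} This decorrelation fails as stated. $\Delta_n$ uses indices $|m-n|<K$ and $W^{(n)}$ uses indices $|m-n|\ge K$, so they are adjacent and FCB gives no decay between $Y_n\Delta_n$ and $e^{itW^{(n)}}$. One must first drop a further annulus. That is what produces $\mu(iY_j\xi_j^{(1)}e^{itW_j^{(2)}})$ and the gap $K$ used in (A3) and $E_5$.

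\textbf{Parts (i) and (iii).} The paper also handles these by citation, but not bare citation. \cite[Theorem~2.3]{LS20} requires checking (B1)--(B3) from \eqref{eq:fcb_many_gaps}. The weak convergence via \cite[Proposition~2.14]{LNN25} requires its condition (b): a uniform lower bound $\sigma_{N,v}^2-\sigma_{N,u}^2\ge AM_N$ whenever $v-u\ge M_N$, with $M_N=o(\sigma_N^2)$. The paper derives (b) from \eqref{eq:conv_var} by an explicit choice of $M_N$, and your proposal omits this step.
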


\begin{proof} Let $C$ denote various absolute constants. For convenience, we set $R(0)=1$.
We also use the following conventions. For any finite subset $J \subset \mathbb{Z}_+$, we write $x_J$
for a generic element $(x_j)_{j \in J} \in [-L, L]^J$.
Using an analogous convention, we write 
$\tilde Y_J = (\tilde Y_j)_{j \in J}$.
For two subsets $J_1, J_2 \subset \mathbb{Z}_+$ that are ordered in the sense that 
every element of $J_1$ is smaller than every element of $J_2$, 
we write $(x_{J_1}, x_{J_2})$ for the concatenation of the corresponding 
vectors. If $J_1 = \varnothing$, we identify $(x_{J_1}, x_{J_2})$ with $x_{J_2}$, and 
more generally, whenever a concatenation involves an empty index set,
the corresponding vector is omitted.

\smallskip 
	
\noindent \textbf{(i):} The upper bound \eqref{eq:ub_w1}
follows by verifying conditions (B1)-(B3) in \cite[Theorem 2.3]{LS20}. Indeed, as noted in \cite[Remark 2.2]{LS20}, this theorem remains valid when the
functions $f^{i}$ in its statement are replaced by the general random variables
$\tilde Y_i$.

By 	\cite[Proposition 2.6]{LNN25}, for $0 \le i \le j < N$,
\begin{align*}
	| \mu( \tilde Y_i \tilde Y_j ) | \le C C_* L^2  R(j-i),
\end{align*}
This proves that (B1) in \cite[Theorem 2.3]{LS20} holds 
with $C_1 = C C_* L^2$ and $\rho(n) = R(n)$.

Next, we verify (B2). To this end, let $m \le k \le N-1$ and let $\Psi : \RR \times [ -4L - 1, 4 L + 1 ] \to \RR$ be a bounded Lipschitz continuous function. Write 
$$
\tilde Y_{n,m} = \sum_{ \substack{ 0 \le i < N \\  |i - n| = m }  } \tilde Y_i. 
$$
Note that 
\begin{align*}
\mu \biggl[  \tilde{Y}_n  \Psi  \biggl( \sum_{  \substack{ 0 \le i < N \\ |i - n| > k }  } \tilde Y_i, \tilde Y_{n,k}   \biggl)  \tilde Y_{n,m}   \biggr] = \mu  \biggl[   g(  \tilde Y_{  J_{-}  }, \tilde Y_n, \tilde Y_{ J_+ }     )   \biggr],
\end{align*}
where 
\begin{align*}
	&J_{-} = \{  0 \le j < N \: : \:   j \le n - k \} \cup \{   0 \le j < N  \: : \:  j = n - m \}, \\ 
	&J_{+}  = \{  0 \le j < N \: : \:   j \ge n + k \} \cup \{   0 \le j < N  \: : \:  j = n + m \}, \\
	&g( x_{ J_{-} } , x_n, x_{J_+ } ) =   x_n  \Psi  \biggl( \sum_{  \substack{ 0 \le i < N \\ |i - n| > k }  }  x_i , 
	\sum_{ \substack{ 0 \le i < N \\  |i - n| = k }  } x_i
	  \biggr) \sum_{ \substack{ 0 \le i < N \\  |i - n| = m }  } x_i.
\end{align*}
It is straightforward to verify that
$$
\Vert g \Vert_{\Lip} \le C L^2 \Vert \Psi \Vert_{ \Lip  }.
$$
Since $\mu(  \tilde Y_n ) = 0$, an application of \eqref{eq:fcb_many_gaps} yields
\begin{align*}
	\biggl| \mu \biggl[  \tilde{Y}_n  \Psi  \biggl( \sum_{  \substack{ 0 \le i < N \\ |i - n| > k }  } \tilde Y_i, \tilde Y_{n,k}   \biggl)  \tilde Y_{n,m}   \biggr] \biggl| \le C C_* L^2 \Vert \Psi \Vert_{\Lip} R(m).
\end{align*}
That is, (B2) in \cite[Theorem 2.3]{LS20} holds with $C_2 = C C_* L^2$. Similarly, we see that 
(B3) in \cite[Theorem 2.3]{LS20} holds with $C_3 = C C_* L^2$. Now, \eqref{eq:ub_w1} follows 
by  \cite[Theorem 2.3]{LS20}.

\smallskip

\noindent \textbf{(ii):} The proof of the claimed upper bound on $d_{\cK}(W,Z)$ is given in Section \ref{s:5.4}. 

\smallskip 

\noindent\textbf{(iii):} The upper bound \eqref{eq:error_fclt} was established in \cite[Theorem 2.11]{LNN25}, so we only need to show the last statement about weak convergence. This in turn follows from \cite[Proposition 2.14]{LNN25}, once we verify the following conditions:
\begin{itemize}
	\item[(a)] $\sigma_N^{-1} = o(N^{-2/3})$. \smallskip
	\item[(b)] There exist $A > 0$,  
	$N_0 \ge 1$ and $M_N \ge 1$ with $M_N = o(\sigma^2_N)$
	such that, whenever $N \ge N_0$,
	\begin{align*}
		\inf_{ 0 \le u,v < N, \:  v - u \ge M_N } ( \sigma_{N,v}^2 - \sigma_{N,u}^2 )  \ge A M_N.
	\end{align*}
\end{itemize}
Condition (a) holds due to the assumption \eqref{eq:conv_var}.

\smallskip 

\noindent\emph{Proof of (b).}  Define 
\begin{align*}
	M_N = \min \{ m \ge \sqrt{N} \: : \: m \ge \frac{8}{\Sigma^2} \cE(m) N  \},
\end{align*}	
where
$$
\cE( m ) = \sup_{ k \ge m } |  \Sigma^2 - k^{-1} \sigma_{k}^2  |.
$$
Then $\cE(m) \downarrow 0$ as $m \to \infty$. There exists an integer $K_1$ such that
$$
N \ge  \frac{8}{\Sigma^2} \cE(N) N
$$
for all $N \ge K_1$.
Consequently, $M_N \le N$ and $\cE(N) \le \Sigma^2 / 8$
for $N \ge K_1$.
Further, note that 
$$
M_N = o(\sigma^2_N) = o(N).
$$
Indeed, by minimality of $M_N$ we have 
\begin{align*}
	\frac{M_N}{N} \le \frac1N \biggl( \frac{8}{\Sigma^2} \cE( M_N - 1 ) N + 1  \biggr) 
    + \biggl( \frac1N + \frac{1}{\sqrt{N}} \biggr)
    \to 0, \quad \text{as $N \to \infty$.}
\end{align*}
Here, we used $\lim_{N\to \infty} M_N = \infty$.

Restrict to $N \ge K_1$ and let $0 \le u \le u + h < N$ be integers such that $h \ge M_N$.
Using \eqref{eq:rates_R} and the definition of $\cE$, we have
\begin{align*}
	\sigma_{N, u+h}^2 - \sigma_{N, u}^{2} \ge h \Sigma^2 - u \cE(u) - (u + h) \cE(u + h) - C_1 
	\ge  h \Sigma^2 - u \cE(u) - 2 N \cE(M_N) - C_1,
\end{align*}
for some constant $C_1 > 0$ independent of $N, u, h$. Moreover, for all $u \le N$,
\begin{align*}
	u \cE(u)  &\le  u \cE(u) \mathbf{1}_{  u \le K_1 } + u \cE(u) \mathbf{1}_{  K_1 < u \le M_N }
	+
	 u \cE(u) \mathbf{1}_{ u \ge M_N } \\
	&\le \sup_{w \le K_1 } w \cE(w) + M_N \frac{\Sigma^2}{8} + N \cE(M_N).
\end{align*}
It follows that 
\begin{align*}
	\sigma_{N, u+h}^2 - \sigma_{N,u}^{2}
	\ge  M_N \Sigma^2 - M_N  \frac{\Sigma^2}{2} - C_2 \ge 
	 \frac{\Sigma^2}{2} M_N   - C_2,
\end{align*}
for some constant $C_2 > 0$ independent of $N, u, h$. Here, we used the definition of $M_N$ to 
obtain the first inequality. Choose $N_0 \ge K_1$ such that $M_N \ge C_2 4 / \Sigma^2$ for all 
$N \ge N_0$. Then, for all $N \ge N_0$,
\begin{align*}
	\sigma_{N, u+h}^2 - \sigma_{N, u}^{2} \ge 
	\frac{\Sigma^2}{4} M_N.
\end{align*}
Consequently, (b) holds with $A = \Sigma^2 / 4$.

\hfill
\qed$_{(b)}$

Proposition 2.14 of \cite{LNN25} now implies that  $\mathscr W$ converges weakly to $\mathscr Z$ in the Skorokhod topology as $N \to \infty$.
This completes the proof of (iii).
\end{proof}

\begin{cor}\label{cor:kolmogorov}
Assume that $\{\tilde Y_n \}_{0\le n<N}$ satisfies \eqref{eq:fcb}
for all $N \ge 1$ with 
constant $C_*$ and rate function
$R(n) = \theta^n$, where $\theta \in (0,1)$. Then, 
	there exists a constant $C_\theta > 0$ depending only on $\theta$ such that, for $N \ge 1$,
	\begin{align*}
		d_\cK(W, Z) \le  C_\theta C_* L^3 \log^2(N + 1) (  N^{-1/2} + \sigma_N^{-1} + N \sigma_N^{-3} + \sqrt{N} \sigma_N^{-2}  ).
	\end{align*}
	In particular, if $\sigma_N^{-1} = O(N^{-1/2})$, we obtain $d_\cK(W, Z) = O ( \log^2 (N)  N^{-1/2} )$.
\end{cor}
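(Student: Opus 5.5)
The corollary follows by specializing Theorem~\ref{thm:clt_tikhomirov}(ii) to the geometric rate $R(n)=\theta^n$ and choosing $K$ of logarithmic size. For this rate, $\hat R(n)=\theta^n$ for $n\ge1$ and $\hat R(0)=1$. Hence $\hat R_1(N)\le 1+\theta/(1-\theta)$, which is bounded by a constant depending only on $\theta$. The tail sum satisfies $\sum_{j=K}^{N-1}R(j)\le \theta^K/(1-\theta)$.

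We choose
\[
K=\Bigl\lceil 3+\tfrac12\log N/\log 2+ \log(N+1)/\log(1/\theta)\Bigr\rceil .
\]
This satisfies the lower constraint $K>2+\frac12\log N/\log2$, gives $\theta^K\le (N+1)^{-1}$, and gives $K\le C_\theta'\log(N+1)$ for $N\ge2$, with $C_\theta'$ depending only on $\theta$. If this $K$ exceeds $N$, the constraint $K\le N$ fails. Then $N\le C''_\theta$ is bounded, and we argue trivially: $d_\cK\le1$, and $\log^2(N+1)N^{-1/2}$ is bounded below by a positive constant depending on $\theta$ on this finite range of $N$ (for $N\ge1$). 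Enlarging $C_\theta$ absorbs this case, using $C_*\ge1$ and $L\ge1$.

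In the main case we insert these quantities term by term into \eqref{eq:clt_tikhomirov}:
\begin{itemize}
\item The first term is $CL^2\sigma_N^{-2}N\theta^K/(1-\theta)\le C_\theta L^2\sigma_N^{-2}$. Since $N\ge1$, this is at most $C_\theta L^2\sqrt N\sigma_N^{-2}$.
\item The second term is at most $C C_*L^3 C_\theta'^2\log^2(N+1)N\sigma_N^{-3}$.
\item The third term is $CC_*LN\sigma_N^{-1}\theta^K\le CC_*L\sigma_N^{-1}$.
\item The fourth and fifth terms are bounded by $C_\theta\sqrt{C_*}L^2\log^{3/2}(N+1)\sqrt N\sigma_N^{-2}$.
\item The last term is $C_\theta L\log(N+1)\sigma_N^{-1}$.
\end{itemize}
Using $L\ge1$, $C_*\ge1$ and $\log(N+1)\ge\log2$, every term is dominated by $C_\theta C_*L^3\log^2(N+1)$ times one of $\sigma_N^{-1}$, $N\sigma_N^{-3}$, or $\sqrt N\sigma_N^{-2}$. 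The extra $N^{-1/2}$ in the statement is harmless and covers the small-$N$ case.

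For the ``in particular'' part, assume $\sigma_N^{-1}\le cN^{-1/2}$. Then each of $\sigma_N^{-1}$, $N\sigma_N^{-3}$ and $\sqrt N\sigma_N^{-2}$ is $O(N^{-1/2})$, which gives the claimed $O(\log^2(N)N^{-1/2})$ bound.

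The only delicate step is the admissibility of $K$: it must satisfy both $K\le N$ and the lower bound simultaneously while making $\theta^K N\lesssim1$. The choice above handles this, with a separate trivial treatment of bounded $N$. Everything else is bookkeeping of constants.
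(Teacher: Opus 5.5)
Your proposal is correct and follows the paper's argument. The paper likewise applies Theorem~\ref{thm:clt_tikhomirov}(ii) with $K$ of order $\log N/\log(1/\theta)$ and reads off the bound. Your explicit check that $K$ lies in the admissible range $2+\frac12\log N/\log 2<K\le N$ adds a detail the paper's one-line proof leaves implicit, as does your trivial treatment of bounded $N$ via $d_\cK\le 1$ and the $N^{-1/2}$ term.
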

\begin{proof}
	The desired estimate follows immediately by choosing $K = \lceil C_\theta  \log (N)  /  \log( 1 / \theta ) \rceil$ 
	with a suitably large $C_\theta>0$
	in Theorem \ref{thm:clt_tikhomirov}(2).
\end{proof}

\subsection{Conditioning limits}\label{ss:cl}
In this subsection, as a preparation for the linear growth of the dynamical variance $\sigma_{N}$ proven in the next subsection, we establish the existence of conditioning limits.
Recall (\ref{measzeta}) for the definition of $\zeta_{\omega, n}$ for a random probability measure $\zeta_{\omega}.$ 
In the special case that 
$\zeta_{\omega}=\eta_{\omega}$ we 
write
$\eta_{\omega,n}$ for $\zeta_{\omega, n}$,
which more specifically means that
\[
   \eta_{\omega,n}(A)
   = \frac{
     \eta_{\omega} \left( A \cap X_{\omega,n} \right)
   }{
     \eta_{\omega} \left( X_{\omega,n} \right)
   }
\]
for a measurable subset $A \subseteq I$.

\begin{lem}\label{expozeta}
For $m$-a.e.~$\omega\in\Omega$, for each $u,v\in \BV_\Omega(I)$ and each integer $k\ge 1$, we have
\[
\lim_{N\to\infty}
\eta_{\omega,N}\bigl(u_\omega\cdot v_{\sigma^k\omega}\circ T_\omega^k\bigr)
=
\eta_{\omega,\infty}\bigl(u_\omega\cdot v_{\sigma^k\omega}\circ T_\omega^k\bigr).
\]
\end{lem}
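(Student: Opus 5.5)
The plan is to follow the same ratio decomposition used in the proof of \eqref{eq:*2}, with $\zeta=\eta$ and $\psi=\phi$. Here I interpret $\eta_{\omega,\infty}(w):=\nu_{\omega,\infty}(\phi_\omega w)$, in the same way that $\zeta_{\omega,\infty}(J)=\nu_{\omega,\infty}(\psi_\omega\widehat J)$ was used in Section~\ref{s:4}.

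Fix $k\ge1$ and $\omega$ in the full-measure set where Theorem~\ref{lem0916} and Condition~4 hold. Put $w:=u_\omega\cdot v_{\sigma^k\omega}\circ T_\omega^k$.

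First I will check that $w\in\BV(I)$, since everything below requires it. On each $P\in\cZ^{(k)}_\omega$ the map $T^k_\omega$ is monotone and continuous, so $\var_P(v_{\sigma^k\omega}\circ T^k_\omega)\le\var(v_{\sigma^k\omega})$. Summing over the finitely many elements of $\cZ^{(k)}_\omega$, and adding the jumps at their endpoints, gives
\[
\var(v_{\sigma^k\omega}\circ T^k_\omega)\le \#\cZ^{(k)}_\omega\bigl(\var(v_{\sigma^k\omega})+2\|v_{\sigma^k\omega}\|_\infty\bigr)<\infty .
\]
Since $\BV(I)$ is an algebra with $\|ab\|_{\BV}\le 2\|a\|_{\BV}\|b\|_{\BV}$, it follows that $\phi_\omega w\in\BV(I)$. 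Its norm $\|\phi_\omega w\|_{\BV}$ is a finite number depending on $\omega$ and $k$ but not on $N$, which is all we need.

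Next, exactly as in the computation of $\zeta_{\omega,N}(J)$ in Section~\ref{s:fcb-proof-eq2}, I will use $\Leb(\widehat X_{\sigma^N\omega,0}\tcL^N_\omega(g))=\Leb(g\widehat X_{\omega,N})$ to write
\[
\eta_{\omega,N}(w)=\frac{\Leb\bigl(\widehat X_{\sigma^N\omega,0}\,\tcL_\omega^N(\phi_\omega w)\bigr)}{\Leb\bigl(\widehat X_{\sigma^N\omega,0}\,\tcL_\omega^N(\phi_\omega)\bigr)} .
\]
Two facts then simplify this ratio.
\begin{itemize}
\item By Theorem~\ref{lem0916}(2), $\tcL^N_\omega\phi_\omega=\phi_{\sigma^N\omega}$ exactly. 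So the denominator equals $A_N:=\Leb(\widehat X_{\sigma^N\omega,0}\phi_{\sigma^N\omega})$.
\item In the numerator, the definition of $Q_{\omega,N}$ gives $\tcL^N_\omega(\phi_\omega w)=\nu_{\omega,\infty}(\phi_\omega w)\,\phi_{\sigma^N\omega}+Q_{\omega,N}(\phi_\omega w)$.
\end{itemize}
Dividing by $A_N$ yields
\[
\bigl|\eta_{\omega,N}(w)-\eta_{\omega,\infty}(w)\bigr|
\le \frac{\Leb(X_{\sigma^N\omega,0})\,\|Q_{\omega,N}(\phi_\omega w)\|_\infty}{A_N}
\le C_\phi D\,\|\phi_\omega w\|_{\BV}\,\kappa^N .
\]
The last step uses $A_N\ge C_\phi^{-1}\Leb(X_{\sigma^N\omega,0})$, from Condition~4, together with the uniform bound on $Q_{\omega,N}$, also from Condition~4. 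The right-hand side tends to $0$ as $N\to\infty$, which proves the lemma.

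I expect the only delicate points to be the following.
\begin{itemize}
\item The BV bound for $v\circ T^k_\omega$, which relies on the finiteness of $\cZ^{(k)}_\omega$.
\item Confirming that $\eta_{\omega,\infty}$ means $\nu_{\omega,\infty}(\phi_\omega\,\cdot)$. This is consistent with $\nu_{\omega,\infty}(\phi_\omega)=1$, so it is a probability measure.
\end{itemize}
In this argument the factor $\Leb(X_{\sigma^N\omega,0})$ cancels, so no lower bound on the size of the hole complement is needed.
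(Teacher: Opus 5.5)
Your proof is correct. It uses the same ingredients as the paper: the duality ratio for $\eta_{\omega,N}$, the splitting $\tcL^n_\omega u=\nu_{\omega,\infty}(u)\phi_{\sigma^n\omega}+Q_{\omega,n}(u)$, and both parts of Condition~4. The difference is where the $k$-step split happens.

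The paper first writes $\widehat X_{\omega,N}=\widehat X_{\omega,k}\cdot(\widehat X_{\sigma^k\omega,N-k}\circ T^k_\omega)$ and pushes $\phi_\omega u_\omega$ forward $k$ steps. It then applies the spectral decomposition over the remaining $N-k$ steps to $\tcL^k_\omega(\phi_\omega u_\omega)\,v_{\sigma^k\omega}$. That function lies in $\BV(I)$ for free, because transfer operators preserve BV and BV is an algebra, so no regularity of $v_{\sigma^k\omega}\circ T^k_\omega$ is needed at that stage. The price is that the limit comes out as $\nu_{\sigma^k\omega,\infty}(\tcL^k_\omega(\phi_\omega u_\omega)v_{\sigma^k\omega})$, and a final conformality step is needed to rewrite it as $\eta_{\omega,\infty}(w)$.

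You instead prove $w\in\BV(I)$ directly, using the finiteness of $\cZ^{(k)}_\omega$ and the monotonicity of $T^k_\omega$ on its elements. You then decompose over all $N$ steps, exactly as in the proof of \eqref{eq:*2}. This costs the small BV lemma, but it identifies the limit as $\nu_{\omega,\infty}(\phi_\omega w)$ immediately and yields the explicit rate $C_\phi D\|\phi_\omega w\|_{\BV}\kappa^N$.

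Incidentally, the paper's final identification tacitly uses the BV membership of $w$ that you establish. The conformal relation in Theorem~\ref{lem0916}(1) is stated only for BV functions, and the paper applies it to $\phi_\omega u_\omega\cdot v_{\sigma^k\omega}\circ T^k_\omega$.
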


\begin{proof}
Fix $u,v\in \BV_\Omega(I)$, $k\ge 1$, and $\omega$ in a full-measure set on which all conclusions of Theorem~\ref{lem0916} and Condition~4 hold.
For each integer $N>k$, using
\[
\widehat X_{\omega,N}
=
\widehat X_{\omega,k}\cdot
\bigl(\widehat X_{\sigma^k\omega,N-k}\circ T_\omega^k\bigr),
\]
the duality relation \eqref{eq:0115a}, and
$
\widetilde{\cL}_\omega^k:=(\lambda_\omega^k)^{-1}\cL_\omega^k,
$
we obtain
\begin{align*}
\eta_{\omega,N}\bigl(u_\omega\cdot v_{\sigma^k\omega}\circ T_\omega^k\bigr)
&=
\frac{
\Leb\bigl(
\widehat X_{\sigma^k\omega,N-k}\,
\widetilde{\cL}_\omega^k(\phi_\omega u_\omega)\,
v_{\sigma^k\omega}
\bigr)
}{
\Leb\bigl(
\widehat X_{\sigma^k\omega,N-k}\phi_{\sigma^k\omega}
\bigr)
}.
\end{align*}
Applying the same relation once more, this becomes
\begin{align*}
\eta_{\omega,N}\bigl(u_\omega\cdot v_{\sigma^k\omega}\circ T_\omega^k\bigr)
&=
\frac{
\Leb\Bigl(
\widehat X_{\sigma^N\omega,0}\,
\widetilde{\cL}_{\sigma^k\omega}^{\,N-k}
\bigl(\widetilde{\cL}_\omega^k(\phi_\omega u_\omega)\,v_{\sigma^k\omega}\bigr)
\Bigr)
}{
\Leb\bigl(
\widehat X_{\sigma^N\omega,0}\phi_{\sigma^N\omega}
\bigr)
}.
\end{align*}
Now the decomposition 
$\widetilde{\cL}_{\sigma^k\omega}^{\,N-k}
(u)
=
\nu_{\sigma^k\omega,\infty}(u)
\phi_{\sigma^N\omega}
+
Q_{\sigma^k\omega,N-k}(u)
$
gives
\begin{align*}
\eta_{\omega,N}\bigl(u_\omega\cdot v_{\sigma^k\omega}\circ T_\omega^k\bigr)
=
\nu_{\sigma^k\omega,\infty}
\bigl(\widetilde{\cL}_\omega^k(\phi_\omega u_\omega)\,v_{\sigma^k\omega}\bigr)+
\frac{
\Leb\Bigl(
\widehat X_{\sigma^N\omega,0}\,
Q_{\sigma^k\omega,N-k}
\bigl(\widetilde{\cL}_\omega^k(\phi_\omega u_\omega)\,v_{\sigma^k\omega}\bigr)
\Bigr)
}{
\Leb\bigl(
\widehat X_{\sigma^N\omega,0}\phi_{\sigma^N\omega}
\bigr)
}.
\end{align*}
By Theorem~\ref{lem0916} and Condition~4, the second term tends to $0$ as $N\to\infty$.
Therefore
\[
\lim_{N\to\infty}
\eta_{\omega,N}\bigl(u_\omega\cdot v_{\sigma^k\omega}\circ T_\omega^k\bigr)
=
\nu_{\sigma^k\omega,\infty}
\bigl(\widetilde{\cL}_\omega^k(\phi_\omega u_\omega)\,v_{\sigma^k\omega}\bigr).
\]
Finally, by \eqref{eq:0115a},
\begin{align*}
\nu_{\sigma^k\omega,\infty}
\bigl(\widetilde{\cL}_\omega^k(\phi_\omega u_\omega)\,v_{\sigma^k\omega}\bigr)
=
\nu_{\omega,\infty}
\bigl(\phi_\omega u_\omega\cdot v_{\sigma^k\omega}\circ T_\omega^k\bigr)=
\eta_{\omega,\infty}
\bigl(u_\omega\cdot v_{\sigma^k\omega}\circ T_\omega^k\bigr),
\end{align*}
which proves the claim.
\end{proof}

\subsection{Linear growth of dynamical variance $\sigma_N^2$}\label{s:5.3}
In this subsection, we investigate the dynamical variance $\sigma_N^2$ under Conditions 1--4.

\begin{pro}\label{prop:0625a}
Let $f\in \BV_\Omega (I)$ satisfy $\eta_{\omega,\infty}(f_\omega) = 0$ for $m$-a.e.~$\omega\in\Omega$.
Then the following holds:
\begin{enumerate}
    \item
The following limit exists for $m$-a.e.~$\omega\in\Omega$:
$$ \lim_{N\to\infty}     \eta_{\omega,N}\Big[\frac{1}{N}\Big( \sum_{n=0}^{N-1} f_{\sigma^n\omega} \circ T^n_\omega\Big)^2\Big]   =  \Sigma^2(f), $$
where
$$ \Sigma^2(f) := \int_\Omega \eta_{\omega,\infty} (f^2_\omega)\, dm(\omega) + 2 \sum_{n=1}^{\infty} \int_\Omega \eta_{\omega,\infty}( f_\omega \cdot f_{\sigma^n\omega} \circ T^n_\omega)\, dm(\omega). $$
 \item $\Sigma^2(f)=0$ if and only if 
 $f$ is a coboundary, i.e., 
 there exists $\tilde f\in L^2(\eta_{\infty})$ such that $f_\omega(x)=\tilde f_\omega(x) - \tilde f_{\sigma\omega}\circ T_\omega(x) $ for $m$-a.e.~$\omega\in\Omega$ and $\eta_{\omega,\infty}$-a.e.~$x\in X_{\omega,\infty}$.
 \end{enumerate}
\end{pro}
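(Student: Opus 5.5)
The plan for part (1) is to expand the square and use the conditional invariance of $\eta$. With $S_N:=\sum_{n=0}^{N-1} f_{\sigma^n\omega}\circ T^n_\omega$,
\[
\eta_{\omega,N}(S_N^2)=\sum_{i=0}^{N-1}\eta_{\omega,N}\bigl((f_{\sigma^i\omega}\circ T^i_\omega)^2\bigr)+2\sum_{0\le i<j<N}\eta_{\omega,N}\bigl(f_{\sigma^i\omega}\circ T^i_\omega\cdot f_{\sigma^j\omega}\circ T^j_\omega\bigr).
\]
By $\eta_{\omega,N}(g\circ T^n_\omega)=\eta_{\sigma^n\omega,N-n}(g)$, the $(i,j)$ term equals $c_{N-i}(\sigma^i\omega,j-i)$, where
\[
c_M(\omega',k):=\eta_{\omega',M}\bigl(f_{\omega'}\cdot f_{\sigma^k\omega'}\circ T^k_{\omega'}\bigr).
\]

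I would establish two quantitative facts. The first is a quantitative version of Lemma~\ref{expozeta}. Repeating its computation and bounding the $Q$-term by Condition~4 and $C_\phi$ should give
\[
|c_M(\omega',k)-c_\infty(\omega',k)|\le C\|f_{\omega'}\|_{\BV}\|f_{\sigma^k\omega'}\|_{\BV}\,\kappa^{M-k}\qquad (k<M).
\]
One has to check that $\|\widetilde{\cL}^k(\phi u)v\|_{\BV}$ stays bounded uniformly in $k$; this follows from the Lasota--Yorke estimate \eqref{LYine2}. The second fact is exponential decay of correlations, $|c_M(\omega',k)|\le C r^k$ uniformly in $M$. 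I would get it from Theorem~\ref{FCB} with $\zeta=\eta$ and $g(a,b)=ab$. The centering terms cause no trouble: $\eta_{\omega',M}(f_{\sigma^n\omega'}\circ T^n)$ differs from $\eta_{\sigma^n\omega',\infty}(f)=0$ by $O(\kappa^{M-n})$, by the same argument as the first fact with $k=0$. Here Theorem~\ref{FCB} is stated for H\"older $f$; for BV $f$ I would instead redo the decay directly via the spectral decomposition $\widetilde{\cL}^k=\nu\,\phi+Q$, which gives the same bound.

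Fix a cutoff $K$. Terms with $j-i>K$ contribute at most $CNr^K$ in total. The terms with $j\ge N-K$ number $O(K^2)$, and each is bounded. The remaining terms are replaced by $c_\infty(\sigma^i\omega,j-i)$ at total cost $\sum_i\sum_{k\le K}C\kappa^{N-i-k}=O(1)$. This leaves
\[
\frac1N\sum_{i<N-K}\Bigl(c_\infty(\sigma^i\omega,0)+2\sum_{k=1}^{K}c_\infty(\sigma^i\omega,k)\Bigr),
\]
which converges $m$-a.s. by Birkhoff's theorem applied to $\omega\mapsto c_\infty(\omega,k)$. This function is integrable assuming $\esssup_\omega\|f_\omega\|_{\BV}<\infty$, or at least enough integrability; I would impose this if it is not implicit. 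Letting $K\to\infty$ along a countable sequence, and using $|c_\infty(\cdot,k)|\le Cr^k$, gives the limit $\Sigma^2(f)$. The main technical point in part (1) is the uniformity in $M$ in the decay estimate for $c_M$, since the conditional measures change with $N$.

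For part (2), I would work on the skew product with the $T$-invariant ergodic measure $\eta_\infty$, where $\eta_{\sigma\omega,\infty}(g)=\eta_{\omega,\infty}(g\circ T_\omega)$ follows from conformality and $\cL_\omega\phi_\omega=\lambda_\omega\phi_{\sigma\omega}$. The transfer operator of $\eta_\infty$ is
\[
P_\omega u=\frac{\cL_\omega(\phi_\omega u)}{\lambda_\omega\phi_{\sigma\omega}},
\]
and Condition~4 together with $C_\phi^{-1}\le\phi\le C_\phi$ gives $\|P^n_\omega f_\omega\|_\infty\le C\kappa^n$ whenever $\eta_{\omega,\infty}(f_\omega)=0$. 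Hence $h_\omega:=\sum_{n\ge1}P^n_{\sigma^{-n}\omega}f_{\sigma^{-n}\omega}$ converges in $L^\infty$. Gordin's decomposition then gives $f=\chi+h\circ T-h$, up to index conventions, with $\chi$ a reverse martingale difference, i.e.\ $P\chi=0$.

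Orthogonality of the $\chi\circ T^n$ gives $\Sigma^2(f)=\int\eta_{\omega,\infty}(\chi_\omega^2)\,dm$. If $\Sigma^2(f)=0$, then $\chi=0$ a.e., so $f$ is a coboundary with $\tilde f=-h$, after rearranging signs. Conversely, if $f=\tilde f-\tilde f\circ T$ with $\tilde f\in L^2(\eta_\infty)$, then $S_N$ telescopes. So $\frac1N\int S_N^2\,d\eta_\infty\le\frac4N\|\tilde f\|_{L^2}^2\to0$, and this limit equals $\Sigma^2(f)$ by the same summable-correlation computation done under $\eta_\infty$. I expect the main obstacle here to be verifying that the $L^2(\eta_\infty)$ coboundary relation, which holds only on $X_{\omega,\infty}$, interacts correctly with $P$ supported there, since $\nu_\infty$ is singular to Lebesgue.
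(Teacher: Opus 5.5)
Your proposal is correct. Part (1) has the same skeleton as the paper's proof: expand the square, use $\eta_{\omega,N}(g\circ T^n_\omega)=\eta_{\sigma^n\omega,N-n}(g)$, replace $\eta_{\cdot,M}$ by $\eta_{\cdot,\infty}$, apply Birkhoff lag by lag, and sum the lags via exponential decay. The key estimates, however, differ from the paper's.

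The paper combines the qualitative Lemma~\ref{expozeta} with a Ces\`aro argument for each fixed lag. It then gets $|a_k(\omega)|\le Cr^k$ from Theorem~\ref{FCB} and interchanges limit and sum by dominated convergence. You instead prove an $\omega$-uniform rate $\kappa^{M-k}$ and a correlation bound uniform in the horizon $M$, and then truncate at lag $K$. This gains two things:
\begin{itemize}
\item Averaging over shifted fibres $\sigma^i\omega$ with varying horizon $N-i$ becomes immediate. With only pointwise convergence this is a Maker-type ergodic statement, not plain Ces\`aro.
\item It bounds the finite-horizon correlations $c_{N-i}(\sigma^i\omega,k)$, not only $a_k=c_\infty(\cdot,k)$. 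That stronger domination is what the interchange of limit and sum actually needs.
\end{itemize}
Your remark that Theorem~\ref{FCB} is stated for H\"older observables, while $f$ here is only BV, is also to the point.

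For the direct decay bound, represent the correlation at time $k$ rather than time $M$. Using $\nu_{\omega',\infty}(\phi_{\omega'}f_{\omega'})=0$,
\[
c_M(\omega',k)=\frac{\Leb\bigl(Q_{\omega',k}(\phi_{\omega'}f_{\omega'})\,f_{\sigma^k\omega'}\,\widehat X_{\sigma^k\omega',M-k}\bigr)}{\Leb\bigl(\phi_{\sigma^k\omega'}\,\widehat X_{\sigma^k\omega',M-k}\bigr)}.
\]
The factor $\Leb(X_{\sigma^k\omega',M-k})$ cancels, giving $|c_M(\omega',k)|\le C_\phi D\|\phi_{\omega'}f_{\omega'}\|_{\mathrm{BV}}\|f_{\sigma^k\omega'}\|_\infty\kappa^k$ for all $M\ge k$. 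Pushing forward all $M$ steps only yields $\kappa^k+\kappa^{M-k}$, which is not enough for your truncation.

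Uniformity over $\omega'=\sigma^i\omega$ also needs $\sup_\omega\|\phi_\omega\|_{\mathrm{BV}}<\infty$. Condition~4 does not state this. It follows from \eqref{LYine2}, applied in blocks of length $n_0$ to $\phi_\omega=\widetilde{\mathcal L}^{jn_0}_{\sigma^{-jn_0}\omega}\phi_{\sigma^{-jn_0}\omega}$, together with temperedness of $\|\phi_\omega\|_{\mathrm{BV}}$. Your added hypothesis $\esssup_\omega\|f_\omega\|_{\mathrm{BV}}<\infty$ is the kind of uniform control the paper uses tacitly, for example in the constant $C_f$ in the proof of Proposition~\ref{prop:0625b}.

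For part (2), the paper simply cites \cite[Proposition~3]{DFGV18} for the closed system $T_\omega:X_{\omega,\infty}\to X_{\sigma\omega,\infty}$ with $\eta_{\omega,\infty}$. Your Gordin decomposition is a self-contained version of that argument. The singularity issue you flag is harmless. The set functions $A\mapsto\nu_{\sigma\omega,\infty}(\cL_\omega\widehat A)$ and $A\mapsto\lambda_\omega\nu_{\omega,\infty}(A)$ are finite Borel measures agreeing on intervals, so conformality extends to bounded Borel functions. Hence $P_\omega$ is well defined on $\eta_\infty$-classes and is the $L^2(\eta_\infty)$-adjoint of the Koopman operator.
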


\begin{proof}
We expand the square and split the expression into diagonal and off-diagonal terms:
\begin{align*}
&\eta_{\omega,N}\Big[\frac{1}{N}\Big( \sum_{n=0}^{N-1} f_{\sigma^n\omega} \circ T^n_\omega\Big)^2\Big] \\
&= \frac{1}{N} \sum_{n=0}^{N-1} \eta_{\omega,N} \bigl((f_{\sigma^n\omega} \circ T^n_{\omega})^2\bigr)
 + \frac{2}{N} \sum_{i=0}^{N-2} \sum_{j=i+1}^{N-1} \eta_{\omega,N}\bigl(f _{\sigma^i\omega}\circ T^i_{\omega} \cdot f_{\sigma^j\omega} \circ T^j_{\omega}\bigr).
\end{align*}

For the diagonal term, by the conditional invariance of $\eta$ (i.e.~$\eta_{\omega,1}(T_\omega^{-1}(\cdot))=\eta_{\sigma\omega,0}$ $m$-a.s.~$\omega\in\Omega$; cf.~\cite{AFGV}),
\[
\frac{1}{N} \sum_{n=0}^{N-1} \eta_{\omega,N} \bigl((f_{\sigma^n\omega} \circ T^n_{\omega})^2\bigr)
=
\frac{1}{N} \sum_{n=0}^{N-1} \eta_{\sigma^n\omega,N-n}(f^2_{\sigma^n\omega})
\]
for $m$-a.e.~$\omega\in\Omega$.
Since $\widehat I\in \BV_\Omega(I)$, Lemma~\ref{expozeta} applied to $u=f^2$ and $v= \widehat I$ yields
\[
\eta_{\omega,n}(f_\omega^2)\to \eta_{\omega,\infty}(f_\omega^2)
\]
as $n\to\infty$
for $m$-a.e.~$\omega\in\Omega$.
Hence, by Ces\`aro's theorem,
\[
\frac{1}{N} \sum_{n=0}^{N-1} \eta_{\sigma^n\omega,N-n}(f^2_{\sigma^n\omega})
-
\frac{1}{N} \sum_{n=0}^{N-1} \eta_{\sigma^n\omega,\infty}(f^2_{\sigma^n\omega})
\to 0
\]
for $m$-a.e.~$\omega\in\Omega$.
Then, by Birkhoff's ergodic theorem,
\[
\frac{1}{N} \sum_{n=0}^{N-1} \eta_{\sigma^n\omega,\infty}(f^2_{\sigma^n\omega})
\to
\int_\Omega \eta_{\omega,\infty}(f_\omega^2)\,dm(\omega)
\]
for $m$-a.e.~$\omega\in\Omega$.

For the off-diagonal term, write $k=j-i$.
Using again the conditional invariance of $\eta$, we have
\[
\eta_{\omega,N}\bigl(f_{\sigma^i\omega}\circ T_\omega^i \cdot f_{\sigma^{i+k}\omega}\circ T_\omega^{i+k}\bigr)
=
\eta_{\sigma^i\omega,N-i}\bigl(f_{\sigma^i\omega}\cdot f_{\sigma^{i+k}\omega}\circ T_{\sigma^i\omega}^k\bigr)
\]
for $m$-a.e.~$\omega\in\Omega$.
Therefore
\begin{align*}
&\frac{2}{N}\sum_{i=0}^{N-2}\sum_{j=i+1}^{N-1}
\eta_{\omega,N}\bigl(f_{\sigma^i\omega}\circ T_\omega^i \cdot f_{\sigma^j\omega}\circ T_\omega^j\bigr)\\
&=
2\sum_{k=1}^{N-1}\frac{N-k}{N}
\left(
\frac{1}{N-k}
\sum_{i=0}^{N-1-k}
\eta_{\sigma^i\omega,N-i}\bigl(f_{\sigma^i\omega}\cdot f_{\sigma^{i+k}\omega}\circ T_{\sigma^i\omega}^k\bigr)
\right).
\end{align*}
For each fixed $k\ge 1$, set
\[
a_k(\omega):=
\eta_{\omega,\infty}\bigl(f_\omega\cdot f_{\sigma^k\omega}\circ T_\omega^k\bigr).
\]
Then Lemma~\ref{expozeta} gives
\[
\eta_{\omega,n}\bigl(f_\omega\cdot f_{\sigma^k\omega}\circ T_\omega^k\bigr)
\to
a_k(\omega)
\qquad (n\to\infty)
\]
for $m$-a.e.~$\omega\in\Omega$.
Hence, again by Ces\`aro's theorem, for each fixed $k\ge 1$,
\[
\frac{1}{N-k}
\sum_{i=0}^{N-1-k}
\eta_{\sigma^i\omega,N-i}\bigl(f_{\sigma^i\omega}\cdot f_{\sigma^{i+k}\omega}\circ T_{\sigma^i\omega}^k\bigr)
-
\frac{1}{N-k}
\sum_{i=0}^{N-1-k}
a_k(\sigma^i\omega)
\to 0
\]
for $m$-a.e.~$\omega\in\Omega$.
Applying Birkhoff's ergodic theorem to $a_k$, we obtain
\[
\frac{1}{N-k}
\sum_{i=0}^{N-1-k}
a_k(\sigma^i\omega)
\to
\int_\Omega \eta_{\omega,\infty}\bigl(f_\omega\cdot f_{\sigma^k\omega}\circ T_\omega^k\bigr)\,dm(\omega)
\]
for each fixed $k\ge 1$ and for $m$-a.e.~$\omega\in\Omega$.
Therefore, for each fixed $k\ge 1$,
\begin{align*}
\frac{N-k}{N}
\left(
\frac{1}{N-k}
\sum_{i=0}^{N-1-k}
\eta_{\sigma^i\omega,N-i}\bigl(f_{\sigma^i\omega}\cdot f_{\sigma^{i+k}\omega}\circ T_{\sigma^i\omega}^k\bigr)
\right)\to
\int_\Omega \eta_{\omega,\infty}\bigl(f_\omega\cdot f_{\sigma^k\omega}\circ T_\omega^k\bigr)\,dm(\omega)
\end{align*}
for $m$-a.e.~$\omega\in\Omega$.

Since $\eta_{\omega,\infty}(f_\omega)=0$ for $m$-a.e.~$\omega\in\Omega$, Lemma~\ref{expozeta} with $u=f$, $v\equiv 1$, and $k=1$ gives
\[
\eta_{\omega,n}(f_\omega)\to \eta_{\omega,\infty}(f_\omega)=0
\qquad (n\to\infty)
\]
for $m$-a.e.~$\omega\in\Omega$.
Hence, by Theorem~\ref{FCB} applied to $g(x,y)=xy$, $p=1$, $n_1=0$, $n_2=k$, $\zeta=\eta$, there exist $C>0$ and $r\in(0,1)$ such that
\[
|a_k(\omega)|\le Cr^k
\qquad \text{for $m$-a.e.~$\omega\in\Omega$ and all $k\ge 1$.}
\]
In particular,
\[
\sum_{k=1}^\infty
\left|
\int_\Omega \eta_{\omega,\infty}\bigl(f_\omega\cdot f_{\sigma^k\omega}\circ T_\omega^k\bigr)\,dm(\omega)
\right|
<\infty.
\]
Therefore, by dominated convergence,
\begin{align*}
\lim_{N\to\infty}
\frac{2}{N}\sum_{i=0}^{N-2}\sum_{j=i+1}^{N-1}
\eta_{\omega,N}\bigl(f_{\sigma^i\omega}\circ T_\omega^i \cdot f_{\sigma^j\omega}\circ T_\omega^j\bigr)
=
2\sum_{k=1}^\infty
\int_\Omega \eta_{\omega,\infty}\bigl(f_\omega\cdot f_{\sigma^k\omega}\circ T_\omega^k\bigr)\,dm(\omega)
\end{align*}
for $m$-a.e.~$\omega\in\Omega$.
Combining the diagonal and off-diagonal parts proves item \textup{(1)}.

Item \textup{(2)} concerns the closed random dynamical system
$T_\omega:X_{\omega,\infty} \to X_{\sigma\omega,\infty}$
with its random invariant probability measure $\eta_{\omega,\infty}$.
This is exactly the setting used in the proof of \cite[Proposition~3]{DFGV18}, and the coboundary characterization follows.
\end{proof}

\begin{pro}\label{prop:0625b}
Let $f \in \mathrm{BV}_\Omega(I)$ with $\eta_{\omega,\infty}(f_\omega) = 0$ for $m$-a.e.~$\omega\in\Omega$. 
Then
\[
\lim_{N\to\infty} \frac{\sigma_{\omega,N}^2}{N} = \Sigma^2(f)
\]
for $m$-a.e.~$\omega\in\Omega$.
In particular, if $f$ is not a coboundary, then there exist a constant $C>0$ and a random variable $N_\omega\ge 1$ such that
\[
\sigma_{\omega,N} > C\sqrt N
\]
for $m$-a.e.~$\omega \in \Omega$ and every $N\ge N_\omega$.
\end{pro}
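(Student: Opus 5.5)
The plan is to reduce Proposition~\ref{prop:0625b} to Proposition~\ref{prop:0625a}(1). The difference between $\sigma_{\omega,N}^2$ and the quantity computed there is the centering. We have $\sigma_{\omega,N}^2=\eta_{\omega,N}[(S_N-\eta_{\omega,N}(S_N))^2]$, where $S_N:=\sum_{n=0}^{N-1}f_{\sigma^n\omega}\circ T^n_\omega$. Hence
\[
\frac{\sigma_{\omega,N}^2}{N}=\frac{1}{N}\eta_{\omega,N}(S_N^2)-\frac{1}{N}\bigl(\eta_{\omega,N}(S_N)\bigr)^2 .
\]
The first term tends to $\Sigma^2(f)$ for $m$-a.e.~$\omega$ by Proposition~\ref{prop:0625a}(1). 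It therefore suffices to show that $\eta_{\omega,N}(S_N)=o(\sqrt N)$; I will aim for the stronger statement that it stays bounded.

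To control the mean, use conditional invariance as in the proof of Proposition~\ref{prop:0625a}:
\[
\eta_{\omega,N}(f_{\sigma^n\omega}\circ T^n_\omega)=\eta_{\sigma^n\omega,N-n}(f_{\sigma^n\omega}).
\]
Then repeat the computation in Lemma~\ref{expozeta} with $k=0$. Writing $\widetilde{\cL}^{N-n}_{\sigma^n\omega}(\phi f)=\nu_{\sigma^n\omega,\infty}(\phi_{\sigma^n\omega}f_{\sigma^n\omega})\phi_{\sigma^N\omega}+Q_{\sigma^n\omega,N-n}(\phi f)$, the leading term equals $\eta_{\sigma^n\omega,\infty}(f_{\sigma^n\omega})=0$. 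The remainder is bounded by
\[
C_\phi\,\|Q_{\sigma^n\omega,N-n}(\phi_{\sigma^n\omega}f_{\sigma^n\omega})\|_\infty\,/\,\Leb(\widehat X_{\sigma^N\omega,0}).
\]
By Condition~4 this is at most $D\kappa^{N-n}\|\phi_{\sigma^n\omega}f_{\sigma^n\omega}\|_{\BV}$ times that factor, up to constants. The denominator $\Leb(\widehat X_{\sigma^N\omega,0}\phi_{\sigma^N\omega})\ge C_\phi^{-1}\Leb(X_{\sigma^N\omega,0})$ needs a lower bound that does not degenerate. In the example one has $\Leb(X_{\omega,0})=3/4$. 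In general I would use that $\eta_{\sigma^N\omega}$ is a probability measure, which is exactly the normalization in the ratio, so $\Leb(\widehat X_{\sigma^N\omega,0}\phi_{\sigma^N\omega})$ appears both as normalizer and as the bound; if needed, temperedness controls it along the orbit.

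Summing over $n$ gives
\[
|\eta_{\omega,N}(S_N)|\le C\sum_{n<N}\kappa^{N-n}\|\phi_{\sigma^n\omega}f_{\sigma^n\omega}\|_{\BV}.
\]
With $\sup_\omega\|f_\omega\|_{\BV}$ finite (or tempered) and $\|\phi\|_{\BV}$ tempered, this is $O(1)$, or at worst $o(\sqrt N)$ via temperedness along $\sigma^n\omega$. This proves the limit.

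For the second claim, assume $f$ is not a coboundary. Proposition~\ref{prop:0625a}(2) gives $\Sigma^2(f)>0$. Take $C=\sqrt{\Sigma^2(f)/2}$ and let $N_\omega$ be the first index beyond which $\sigma^2_{\omega,N}/N>\Sigma^2(f)/2$; measurability is standard. The main obstacle is the uniform control of the error terms $\|Q\|_\infty$ divided by $\Leb(\widehat X\phi)$ along the orbit $\sigma^n\omega$. Condition~4 gives uniform constants, so the main care is with the BV norms of $f$, which I would handle by temperedness.
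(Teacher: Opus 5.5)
Your reduction is the paper's own. The identity $\sigma_{\omega,N}^2=\eta_{\omega,N}(S_N^2)-\eta_{\omega,N}(S_N)^2$ is exactly the paper's expansion with $\Delta_N(f)=-\eta_{\omega,N}(S_N)$. Both arguments then apply Proposition~\ref{prop:0625a}(1) and bound $\eta_{\omega,N}(f_{\sigma^n\omega}\circ T^n_\omega)=\eta_{\sigma^n\omega,N-n}(f_{\sigma^n\omega})$ by the $k=0$ version of the computation in Lemma~\ref{expozeta}.

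A minor point first. You need no lower bound on $\Leb(X_{\sigma^N\omega,0})$, and the temperedness fallback you mention there is not among the hypotheses. Bound the numerator by $\|Q_{\sigma^n\omega,N-n}(\phi_{\sigma^n\omega}f_{\sigma^n\omega})\|_\infty\Leb(X_{\sigma^N\omega,0})$. This factor then cancels against $\Leb(\widehat X_{\sigma^N\omega,0}\phi_{\sigma^N\omega})\ge C_\phi^{-1}\Leb(X_{\sigma^N\omega,0})$.

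\textbf{The genuine gap is the summation step.} Write $a(\omega):=\|\phi_\omega f_\omega\|_{\BV}$. Your bound is $|\eta_{\omega,N}(S_N)|\le C\sum_{n<N}\kappa^{N-n}a(\sigma^n\omega)$, which is never smaller than $C\kappa\, a(\sigma^{N-1}\omega)$.
\begin{itemize}
\item Temperedness only gives $a(\sigma^n\omega)\le C_\epsilon(\omega)e^{\epsilon n}$, so the bound is only $O(e^{\epsilon N})$.
\item A tempered function can exceed $\sqrt n$ along the orbit infinitely often. On a Bernoulli shift, take $a$ depending only on $\omega_0$ with $\log a\in L^1(m)$ but $a\notin L^2(m)$.
\end{itemize}
So neither ``$O(1)$'' nor ``at worst $o(\sqrt N)$'' follows from what you invoke. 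You need $\sup_n a(\sigma^n\omega)<\infty$, which is what the paper's constant $C_f$ tacitly uses. Alternatively, you need at least $a(\sigma^n\omega)=o(\sqrt n)$, which follows from $a\in L^2(m)$ and Borel--Cantelli.

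\textbf{The bound for $\phi$} follows from the standing assumptions. Apply \eqref{LYine2} with $n=n_0$ and $u=\phi_{\sigma^{-n_0}\omega}$, and use $\cL^{n_0}_{\sigma^{-n_0}\omega}\phi_{\sigma^{-n_0}\omega}=\lambda^{n_0}_{\sigma^{-n_0}\omega}\phi_\omega$. This gives
\[
\var(\phi_\omega)\le\Theta\,\var(\phi_{\sigma^{-n_0}\omega})+K_{n_0}\lambda_-^{-n_0}.
\]
Iterate, using temperedness of $\|\phi\|_{\BV}$ to kill the term $\Theta^j\var(\phi_{\sigma^{-jn_0}\omega})$. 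The result is $\var(\phi_\omega)\le K_{n_0}\lambda_-^{-n_0}(1-\Theta)^{-1}$. Together with $\phi_\omega\le C_\phi$, this bounds $\|\phi_\omega\|_{\BV}$ uniformly; it is the same argument used for the example in Section~\ref{s:ex1b}.

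\textbf{The bound for $f$} does not follow from $f\in\BV_\Omega(I)$. You must add $\esssup_\omega\|f_\omega\|_{\BV}<\infty$, or at least $\|f_\cdot\|_{\BV}\in L^2(m)$. Then $\|\phi f\|_{\BV}\le\|\phi\|_{\BV}\|f\|_{\BV}$ makes $\eta_{\omega,N}(S_N)$ bounded (respectively $o(\sqrt N)$). The rest of your argument, including the choice $C=\sqrt{\Sigma^2(f)/2}$, then goes through.
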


\begin{proof}
We denote the centered observables by
$\tilde{f}_n := f_{\sigma^n\omega} \circ T^n_\omega$ (centered with respect to $\eta_{\omega,\infty}$ since $\eta_{\omega,\infty}(f_\omega) = 0$) and
$\bar{f}_n := f_{\sigma^n\omega} \circ T^n_\omega - \eta_{\omega,N}(f_{\sigma^n\omega} \circ T^n_\omega)$
(centered with respect to $\eta_{\omega,N}$).
Let $S_N(\bar{f}) = \sum_{n=0}^{N-1} \bar{f}_n$ and $S_N(\tilde{f}) = \sum_{n=0}^{N-1} \tilde{f}_n$. 
Set
\[
\Delta_N(f) := S_N(\bar{f}) - S_N(\tilde{f}) = - \sum_{n=0}^{N-1} \eta_{\omega,N}(f_{\sigma^n\omega} \circ T^n_\omega).
\]
By Lemma \ref{expozeta}, 
$|\eta_{\omega,N}(f_{\sigma^n\omega} \circ T^n_\omega)| = |\eta_{\sigma^n\omega,N-n}(f_{\sigma^n\omega})| \le C_f \kappa^{N-n}$ for some $C_f > 0$, so
$|\Delta_N(f)| \le C_f \sum_{j=1}^{N} \kappa^j \le C_f \kappa/(1-\kappa)$,
which is uniformly bounded in $N$.

Expanding the square, we have
\begin{align*}
\frac{1}{N} \eta_{\omega,N}\left[(S_N(\bar{f}))^2\right]
&= \frac{1}{N} \eta_{\omega,N}\left[(S_N(\tilde{f}))^2\right] + \frac{2 \Delta_N(f)}{N} \eta_{\omega,N}\left[S_N(\tilde{f})\right] + \frac{(\Delta_N(f))^2}{N}.
\end{align*}
The third term vanishes as $N \to \infty$ since $\Delta_N(f)$ is bounded.
Moreover,
\[
\eta_{\omega,N}[S_N(\tilde{f})] = \sum_{n=0}^{N-1} \eta_{\omega,N}(f_{\sigma^n\omega} \circ T^n_\omega) = -\Delta_N(f), 
\]
so the second term equals $-2(\Delta_N(f))^2/N \to 0$.
The first term converges to $\Sigma^2(f)$ by Proposition~\ref{prop:0625a}. 
This completes the proof of the first assertion.

The second assertion follows immediately: if $f$ is not a coboundary, then $\Sigma^2(f) > 0$ by Proposition~\ref{prop:0625a}(2), and the convergence $\sigma_{\omega,N}^2/N \to \Sigma^2(f)$ yields the stated lower bound.
\end{proof}

\subsection{Completing the proof of Theorem \ref{CLT}}\label{s:6.4} For $N \ge 2$ and $\omega \in \Omega$, 
we set
$$
\tilde Y_n = 
\overline{f_{\omega,N,n}}, \quad \mu = \eta_{\omega,N}.
$$
By Theorem \ref{FCB}, there exists a full measure subset $\Omega_1 \subset \Omega$ such that, for 
every $\omega \in \Omega_1$, $( \tilde{Y}_n )_{0 \le n < N}$ satisfies 
 \eqref{eq:fcb} with constant $C_* = L\mathbf{C}_\omega$ and rate function $R(n) = r^n$.

Since $\overline{f_{\omega,N,n}}$ is centered with respect to $\eta_{\omega,N}$ (time-dependent centering), we apply Proposition~\ref{prop:0625b} to conclude that
$\sigma_{\omega,N}^2 / N \to \Sigma^2(f)$ as $N \to \infty$.
By Proposition~\ref{prop:0625a}(2), $\Sigma^2(f) > 0$ 
provided $f$ is not a coboundary.
In particular, $\sigma_{\omega,N}^{-1} = O(N^{-1/2})$.

Therefore, for $\omega \in \Omega_1$ we obtain the following estimates by applying Theorem \ref{thm:clt_tikhomirov} and Corollary \ref{cor:kolmogorov}: for $m$-a.e.~$\omega\in\Omega$ and every $N\ge N_\omega$,
\begin{align*}
d_{\mathcal W}( W_{\omega,N} ,Z) &\le C \mathbf{C}_\omega L^6  \sigma_{\omega, N}^{-3} N = O(N^{-1/2}), \\
d_\cK( W_{\omega,N} , Z) 
&\le C \mathbf{C}_\omega  L^4 \log^2(N + 1) (  N^{-1/2} + \sigma_{\omega, N}^{-1} + N \sigma_{\omega, N}^{-3} + \sqrt{N} \sigma_{\omega, N}^{-2}  ) = O(\log^2(N) N^{-1/2}),\\
d_{\mathcal{S}}(\mathscr W_{\omega,N},\mathscr Z) &\le  C L^5 (   \mathbf{C}_\omega^{3/2}  + 1  )   \sigma_{\omega, N}^{-3} N = O(  N^{-1/2}).
\end{align*}
Here, $C > 0$ is a constant independent of $\omega$ and $N$, and the $O(\cdot)$ bounds follow from $\sigma_{\omega,N}^{-2} = O(N^{-1})$ for $N\ge N_\omega$.
This completes the proof of Theorem~\ref{CLT}.

\section{Proof of Theorem \ref{thm:clt_tikhomirov}(2)}\label{s:5.4}

In this section, we prove Theorem \ref{thm:clt_tikhomirov}(2) by adapting techniques from \cite{T80, S84}.
Throughout this section, $C, C_1, C_2, \ldots$ denote absolute constants whose values may change from line to line.

Let $\{\tilde Y_n \}_{0\le n<N}$ be real-valued random variables on a probability space $(M, \cF, \mu)$ satisfying \eqref{eq:X_j}. Recall the definitions of $W$ and $\sigma_N^2$ from \eqref{eq:defs} and 
\eqref{eq:sigmas}, respectively,
and
write 
$$
\varphi(t) = \mu( e^{ i t W } )
$$
for the characteristic function of $W$.

\subsection{About Tikhomirov's method} Tikhomirov \cite{T80} introduced a method for estimating the Kolmogorov distance in the CLT for stationary weakly dependent sequences, inspired by ideas of Stein \cite{S72} but developed within the classical characteristic function framework of Berry and Esseen. This method was later extended to nonstationary sequences by Sunklodas \cite{S84}. We begin by briefly reviewing some of the key ideas in \cite{T80, S84}, which form the basis of the proof of Theorem~\ref{thm:clt_tikhomirov}(2).
For further background on Tikhomirov's method, as well as 
extensions of the method 
to target distributions other than the normal distribution, we refer the reader to \cite{AMPS16}.

Recall that $Y_n = \sigma_N^{-1} \tilde Y_n$.
For $0 \le j, K < N$, define
\begin{align}\label{eq:wj}
	\begin{split}
	W_{j}^{(\ell)} &=  \sum_{
		\substack{
			0 \le n < N \\
			|n - j | >  K \ell
	}}  Y_n, \quad \ell \ge 1,\\
	W_{j}^{(0)} &= W.
	\end{split}
\end{align}
Moreover, for $t \in \RR$, let
\begin{align*}
	\xi_j^{(\ell)} = \xi_j^{(\ell)}(t) =  e^{it  ( W_j^{(\ell - 1)}  - W_j^{(\ell)} ) } - 1.
\end{align*}
Then, the derivative of the characteristic function $\varphi(t) = \mu( e^{i t W} )$ of $W$ satisfies
\begin{align*}
	\varphi'(t) &=  \mu( iW e^{itW} )  = \sum_{j=0}^{N - 1}  \mu \left(  i Y_j   e^{it W_j^{(0)} }  \right).
\end{align*}
Adding and subtracting $\mu(i Y_j e^{itW_j^{(1)}})$ from the summands, we find that
\begin{align}\label{eq:decomp_phi_prime-1}
\varphi'(t) =  \sum_{j=0}^{N-1} \mu(i Y_j e^{itW_j^{(1)}})  
+ \sum_{j=0}^{N-1} \mu(  i Y_j \xi_{j}^{(1)}  e^{ it W_j^{(1)} } ).
\end{align}
Let $k \ge 2$. Successively
adding and subtracting 
$$\mu(i Y_j \xi_j^{(1)} e^{itW_j^{(2)}}), \ldots, \mu(iY_j \prod_{\ell = 1 }^{k-1} \xi_j^{(\ell)} e^{itW_j^{(k)}} )$$ 
from the summands of the second term in \eqref{eq:decomp_phi_prime-1}, one obtains the representation
\begin{align}\label{eq:decom_phi_prime}
	\begin{split}
		\varphi'(t) &=  
		\sum_{j=0}^{N-1} \mu(i Y_j e^{itW_j^{(1)}}) + \sum_{j=0}^{N-1} \sum_{r=2}^k \mu \biggl(iY_j \prod_{\ell = 1 }^{r-1} \xi_j^{(\ell)} e^{itW_j^{(r)}} \biggr) \\
		&+ \sum_{j=0}^{N-1}  
		\mu \biggl(iY_j \prod_{\ell = 1 }^{k} 
		\xi_j^{(\ell)} e^{itW_j^{(k)}} \biggr).
	\end{split}
\end{align}

Further, for $r \ge 0$, define
\begin{align}\label{eq:eta}
	\eta_j^{(r)} =  e^{-i t \hat{W}_j^{(r)} } - 1, \quad \hat{W}_j^{(r)} =  W  - W_j^{(r)} = \sum_{ \substack{
			0 \le n < N \\
			| n - j | \le Kr 
	}  } Y_n.
\end{align}

The following two lemmas are taken from \cite{S84}. 
We include their proofs to make the exposition self-contained.

\begin{lem}\label{lem:sunklodas_1} 
	Denote
	\begin{align*}
		a_j^{(r)} =  \mu \biggl( i Y_j \prod_{\ell = 1}^r \xi_j^{(\ell)}  \biggr).
	\end{align*}
	Then,
	\begin{align*}
		\varphi'(t) = (E_1 + E_2) \varphi(t) + \sum_{i=3}^6 E_i,
	\end{align*}
	where
	\begin{align*}
		E_1 &= \sum_{j=0}^{N-1} a_j^{(1)}, \quad 
		E_2 = \sum_{j=0}^{N-1} \biggl[ a_j^{(1)} \mu( \eta_j^{(2)} ) + \sum_{r=3}^k a_j^{(r-1)}  \mu( \eta_j^{(r)} + 1 )  \biggr], \\
		E_3 &= \sum_{r=2}^k \sum_{j=0}^{N-1}  a_j^{(r-1)} \mu[  (  \eta_j^{(r)} - \mu (  \eta_j^{(r)} ) ) e^{itW } ], \quad 
		E_4 = \sum_{j=0}^{N-1} \mu \biggl(   i Y_j \prod_{\ell = 1}^k \xi_j^{(\ell)} e^{it W_j^{(k)} }  \biggr), \\
		E_5 &= \sum_{r = 2}^k \sum_{j=0}^{N-1} \mu \biggl(  i Y_j \prod_{\ell = 1}^{r-1} \xi_j^{(\ell)} ( e^{it W_j^{(r)}} - \mu ( e^{it W_j^{(r)}}  ) )  \biggr), 
		\quad E_6 = \sum_{j=0}^{N-1} \mu( i Y_j e^{it W_j^{(1)} } ).
	\end{align*}
	
\end{lem}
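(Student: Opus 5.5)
This identity is purely algebraic, so the plan is to start from the representation \eqref{eq:decom_phi_prime} and rewrite each of its terms; no correlation bounds are needed at this stage. The last term of \eqref{eq:decom_phi_prime} is exactly $E_4$, and the first sum is exactly $E_6$. It therefore remains to treat the middle double sum, with summands $\mu\bigl(iY_j \prod_{\ell=1}^{r-1}\xi_j^{(\ell)} e^{itW_j^{(r)}}\bigr)$ for $2\le r\le k$.

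For each such summand, first center the exponential by writing $e^{itW_j^{(r)}} = \bigl(e^{itW_j^{(r)}}-\mu(e^{itW_j^{(r)}})\bigr)+\mu(e^{itW_j^{(r)}})$. The centered part, summed over $r$ and $j$, is precisely $E_5$. The remaining part is $a_j^{(r-1)}\mu(e^{itW_j^{(r)}})$. Next use $W_j^{(r)} = W - \hat W_j^{(r)}$ from \eqref{eq:eta}, which gives $e^{itW_j^{(r)}} = (\eta_j^{(r)}+1)e^{itW}$. Hence
\[
\mu(e^{itW_j^{(r)}}) = \mu\bigl((\eta_j^{(r)}+1)e^{itW}\bigr) = \mu\bigl((\eta_j^{(r)}-\mu(\eta_j^{(r)}))e^{itW}\bigr) + \mu(\eta_j^{(r)}+1)\,\varphi(t).
\]
Multiplying by $a_j^{(r-1)}$ and summing, the first piece gives $E_3$, and the second gives $\sum_{r=2}^k\sum_j a_j^{(r-1)}\mu(\eta_j^{(r)}+1)\varphi(t)$.

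Finally, split off $r=2$ in this last sum: $a_j^{(1)}\mu(\eta_j^{(2)}+1) = a_j^{(1)} + a_j^{(1)}\mu(\eta_j^{(2)})$. The constant part summed over $j$ is $E_1$. The rest, together with the terms for $r\ge 3$, is $E_2$. This yields $\varphi'(t) = (E_1+E_2)\varphi(t) + E_3+E_4+E_5+E_6$.

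The only point needing care is bookkeeping: making sure the $r=2$ term is split so that $E_1$ appears separately from $E_2$ exactly as defined, and checking the sign convention $\eta_j^{(r)} = e^{-it\hat W_j^{(r)}}-1$, which is what makes $(\eta_j^{(r)}+1)e^{itW} = e^{itW_j^{(r)}}$. There is no genuine obstacle.
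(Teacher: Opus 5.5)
Your proposal is correct and follows the paper's proof essentially step for step. You start from \eqref{eq:decom_phi_prime} and read off $E_4$ and $E_6$, center $e^{itW_j^{(r)}}$ to extract $E_5$, use $e^{itW_j^{(r)}}=(\eta_j^{(r)}+1)e^{itW}$ to split off $E_3$ and the $\varphi(t)$-multiple, and finally separate the constant part of the $r=2$ term to obtain $(E_1+E_2)\varphi(t)$.
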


\begin{proof} Starting from \eqref{eq:decom_phi_prime}, we have 
	\begin{align*}
		\varphi'(t) &=   E_4 + E_6 + \sum_{j=0}^{N-1} \sum_{r=2}^k \mu \biggl(iY_j \prod_{\ell = 1 }^{r-1} \xi_j^{(\ell)} e^{itW_j^{(r)}} \biggr) \\
		&= E_4 + E_5 +  E_6 + \sum_{j=0}^{N-1} \sum_{r=2}^k \mu \biggl(iY_j \prod_{\ell = 1 }^{r-1} \xi_j^{(\ell)}  \mu ( e^{itW_j^{(r)}} ) \biggr).
	\end{align*}
	Since
	\begin{align*}
		\mu( e^{it W_{j}^{(r)} } ) &= \varphi(t)  + \mu( e^{it W_{j}^{(r)} } ) - \varphi(t)  \\
		&= \mu(  \eta_j^{(r)} + 1  ) \cdot  \varphi(t)  + \mu(   (   \eta_j^{(r)} - \mu (  \eta_j^{(r)} ) ) e^{itW} ),
	\end{align*}
	it follows that 
	\begin{align*}
		\varphi'(t) &= \sum_{i=4}^6 E_i 
		+ \sum_{j=0}^{N-1} \sum_{r=2}^k \mu \biggl(iY_j \prod_{\ell = 1 }^{r-1} \xi_j^{(\ell)}  \mu(  \eta_j^{(r)} + 1  ) \cdot  \varphi(t)   \biggr) \\
		&+ \sum_{j=0}^{N-1} \sum_{r=2}^k \mu \biggl(iY_j \prod_{\ell = 1 }^{r-1} \xi_j^{(\ell)}   \mu[   (   \eta_j^{(r)} - \mu (  \eta_j^{(r)} ) ) e^{itW} ] \biggr) \\
		&=  \sum_{i=3}^6 E_i + \sum_{j=0}^{N-1} \sum_{r=2}^k a_j^{(r - 1)} \mu(  \eta_j^{(r)} + 1  )  \varphi(t)  
		=   (E_1 + E_2) \varphi(t) + \sum_{i=3}^6 E_i,
	\end{align*}
	as wanted.
\end{proof}

\begin{lem}\label{lem:sunklodas_2} Suppose that
	\begin{align}\label{eq:ode}
		f'(t) = (-t + \theta(t) a (t) ) f(t) + \theta(t) b(t), \quad f(0) = 1,
	\end{align}
	for $|t| \le T_1$, where
	\begin{align*}
		a(t) &= a^{(0)} + a^{(1)}|t| + a^{(2)}t^2, \\
		b(t) &= b^{(0)} + b^{(2)}t^2.
	\end{align*}
	Here $a^{(i)} \ge 0$ and $b^{(i)} \ge 0$ for $0 \le i \le 2$ are constants independent of $t$ and $\theta(t)$ 
	is a complex valued function with $| \theta(t) | \le 1$. If $a^{(1)} \le 1/6$, then
	for an absolute constant $C > 0$,
	\begin{align*}
		&| f(t) - e^{- t^2/2} | 
		\le C \biggl[ a^{(0)}|t| + a^{(1)}t^2 + a^{(2)}|t|^3    \biggr] e^{-t^2 / 4}
		+ C e \biggl[  b^{(0)} \min \{    |t|^{-1}, |t| \} +   b^{(2)} |t|  \biggr]
	\end{align*}
	holds for all $|t| \le \min \{ T_1, T_2 \}$ where
	\begin{align*}
		T_2 = \min \biggl\{  \frac{1}{a^{(0)}}, \frac{1}{6a^{(2)}}      \biggr\}.
	\end{align*}
\end{lem}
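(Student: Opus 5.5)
We will treat the equation as a perturbation of the Gaussian ODE $g'=-tg$, whose solution with $g(0)=1$ is $e^{-t^2/2}$. Set $h(t)=f(t)-e^{-t^2/2}$; then $h(0)=0$ and
\[
h'(t)=\bigl(-t+\theta(t)a(t)\bigr)h(t)+\theta(t)\bigl(a(t)e^{-t^2/2}+b(t)\bigr).
\]
By symmetry we argue only for $0\le t\le\min\{T_1,T_2\}$; the case $t<0$ is identical after $t\mapsto -t$. With the integrating factor $\Phi(s,t)=\exp\bigl(\int_s^t(-u+\theta(u)a(u))\,du\bigr)$, variation of constants gives
\[
h(t)=\int_0^t \Phi(s,t)\,\theta(s)\bigl(a(s)e^{-s^2/2}+b(s)\bigr)\,ds .
\]
Since $|\theta|\le1$, we have $|\Phi(s,t)|\le \exp\bigl(-\tfrac{t^2-s^2}{2}+\int_s^t a(u)\,du\bigr)$.

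The key step, and the main obstacle, is to show that on the admissible range the $a$-term cannot destroy the Gaussian decay, i.e.
\[
\int_s^t a(u)\,du\le C+\tfrac14(t^2-s^2)\qquad (0\le s\le t).
\]
We handle the three pieces of $a$ separately.
\begin{itemize}
\item For $a^{(0)}$: since $t\le 1/a^{(0)}$, we get $a^{(0)}(t-s)\le a^{(0)}t\le 1$.
\item For $a^{(1)}$: we have $a^{(1)}\int_s^t u\,du=a^{(1)}(t^2-s^2)/2\le (t^2-s^2)/12$, using $a^{(1)}\le1/6$.
\item For $a^{(2)}$: we have $a^{(2)}(t^3-s^3)/3\le a^{(2)}t\,(t^2-s^2)\cdot\tfrac{t^2+ts+s^2}{3t(t+s)}\le a^{(2)}t\,(t^2-s^2)/2$, and $t\le 1/(6a^{(2)})$ makes this at most $(t^2-s^2)/12$.
\end{itemize}
Summing gives $\int_s^t a\le 1+(t^2-s^2)/6$, hence
\[
|\Phi(s,t)|\le e\cdot e^{-(t^2-s^2)/3}.
\]
Even exponent $1/4$ would suffice, so there is room for slack in the constants.

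Insert this into the integral representation. For the $a$-part we use $e^{-(t^2-s^2)/3}e^{-s^2/2}\le e^{-t^2/3}\le e^{-t^2/4}$, which gives
\[
e\,e^{-t^2/4}\int_0^t\bigl(a^{(0)}+a^{(1)}s+a^{(2)}s^2\bigr)ds\le C\bigl[a^{(0)}t+a^{(1)}t^2+a^{(2)}t^3\bigr]e^{-t^2/4}.
\]
For the $b$-part we need
\[
e\int_0^t e^{-(t^2-s^2)/3}\bigl(b^{(0)}+b^{(2)}s^2\bigr)ds .
\]
Trivially $\int_0^t e^{-(t^2-s^2)/3}ds\le t$. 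A Dawson-type estimate also gives $\int_0^t e^{-(t^2-s^2)/3}ds\le C/t$: split at $s=t/2$, bound the part near $t$ via $t^2-s^2\ge t(t-s)$, and bound the rest by $te^{-t^2/4}\le C/t$. Hence the $b^{(0)}$ contribution is at most $Cb^{(0)}\min\{t,t^{-1}\}$. For $b^{(2)}$ we use $s^2e^{-(t^2-s^2)/3}\le s^2e^{-t(t-s)/3}$, which integrates to at most $Ct$ when $t\ge1$ and to at most $t^3\le t$ when $t\le1$. Together these bound the $b$-part by $Ce\bigl[b^{(0)}\min\{|t|^{-1},|t|\}+b^{(2)}|t|\bigr]$, and combining the two parts yields the claimed estimate.
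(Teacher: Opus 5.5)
Your proposal is correct and follows essentially the paper's route. The paper also solves the linear ODE by an integrating factor, absorbs $a$ into the Gaussian via $\int_u^t a(v)\,dv\le 1+\tfrac14(t^2-u^2)$ (your bookkeeping gives the slightly sharper $\tfrac16$), and bounds the $b$-contribution with the same Dawson-type integral estimates. The only cosmetic difference is how the $a$-term is handled: you treat $\theta a e^{-t^2/2}$ as a Duhamel source for $h=f-e^{-t^2/2}$, while the paper bounds that same quantity, $e^{-t^2/2}(e^{x_0(t)}-1)$, directly via $|e^z-1|\le |z|e^{|z|}$ and $|x_0(t)|\le 1+\tfrac{5}{36}t^2$.
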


\begin{proof} The linear ODE \eqref{eq:ode} is solved by
	\begin{align*}
		f(t) = \exp\biggl\{  -\frac12 t^2 + x_0(t) \biggr\}
		+ \exp\biggl\{  -\frac12 t^2 + x_0(t) \biggr\} 
		\int_0^t \theta(u) b(u)  \exp\biggl\{  \frac12 u^2 - x_0(u) \biggr\}  \, du,
	\end{align*}
	where
	\begin{align*}
		x_0(t) = \int_0^t \theta(u) a(u) \, du.
	\end{align*}
	Since $|\theta(t)| \le 1$, we have $| f(t) - e^{-\frac12 t^2} | \le I + II$, where 
	\begin{align*}
		I &= |x_0(t)| e^{ -\frac12 t^2 + |x_0(t)|  }, \\
		II &=  e^{-\frac12 t^2} \int_0^{|t|} b(u) \exp \biggl\{ \frac{u^2}{2} 
		+ \int_{u}^{|t|} a(v) \, dv \biggr\} \, du.
	\end{align*}
	
	Using $a^{(1)} \le 1/6$, we have for any $|t| \le T_2$ the inequality
	\begin{align*}
		|x_0(t)|
		\le a^{(0)} |t| + \frac12 a^{(1)} t^2 + \frac13 a^{(2)} |t|^3
		\le 1 + \frac{5}{36} t^2,
	\end{align*}
	which implies
	\begin{align*}
		I \le  e ( a^{(0)} |t| + a^{(1)} t^2 + a^{(2)} |t|^3 ) e^{ - t^2 / 4 }.
	\end{align*}
	
	For $II$, observe that whenever $0 \le u \le t \le T_2$,
	\begin{align}\label{eq:II_1}
		\begin{split}
			\int_u^t a(v) \, dv &= \int_{u}^t a^{(0)} + a^{(1)} v + a^{(2)} v^2 \, dv 
			\le 1 + \int_u^t   v ( a^{(1)}  + a^{(2)} v ) \, dv \\
			&\le 1 + t  \biggl(  a^{(1)} (t - u)  +  \frac{ a^{(2)} }{2} (t^2 - u^2)   \biggr) \\
			&\le 1 + \frac16 ( t^2 - u^2 ) + \frac{1}{12} (t^2 - u^2) = 1 + \frac{1}{4} (t^2 - u^2),
		\end{split}
	\end{align}
	where $t \le 1 / a^{(0)}$ was used in the second step and $a^{(1)} \le 1/6$, 
	$t \le 1/6a^{(2)}$ were used in the second-to-last step. Note that
	\begin{align}\label{eq:II_2}
		\int_0^{|t|} u^2 e^{u^2 / 4} \, du \le 2 |t| e^{ t^2 / 4 }
		\qquad \text{and} \qquad 
		\int_0^{|t|} e^{ u^2 / 4 } \, du \le \min \{  4 / |t| , |t| \} e^{ t^2 / 4 }.
	\end{align}
	Consequently, for $|t| \le T_2$,
	\begin{align*}
		II 
		&\le e^{-\frac12 t^2} \int_0^{|t|} b(u) \exp \biggl\{ \frac{u^2}{2} \biggr\}  \exp \biggl\{ 
		1 + \frac{1}{4} (t^2 - u^2)
		\biggr\} \, du \\
		&= e e^{-\frac14 t^2} \int_0^{|t|} ( b^{(0)} + b^{(2)}u^2 ) e^{  \frac14 u^2  }  \, du \\
		&\le e e^{-  \frac14 t^2 } \biggl( 
		b^{(0)}  \min \{  4 / |t| , |t| \} e^{ \frac14 t^2 } + b^{(2)} 2 |t| e^{ \frac14 t^2 }
		\biggr) \\
		&\le  C e  \biggl( 
		b^{(0)}  \min \{  4 / |t| , |t| \}  + b^{(2)}  |t|
		\biggr),
	\end{align*}
	where \eqref{eq:II_1} was used in the first step, and \eqref{eq:II_2} was used in the second-to-lasts step.
	This completes the proof of the lemma.
\end{proof}

In the proof of Theorem \ref{thm:clt_tikhomirov}(2),
Lemma \ref{lem:sunklodas_2} will be applied 
as follows. Suppose that 
\begin{align}\label{eq:phi_ode}
	\varphi'(t) = (-t + \theta(t) a (t) ) \varphi (t) + \theta(t) b(t)
\end{align}
for $|t| \le T_1$, where $\theta(t), a(t), b(t)$ are as in Lemma \ref{lem:sunklodas_2}. Observe that 
$\varphi'(0) =  \mu(i W) = 0$. By Esseen's inequality,
\begin{align*}
	d_\cK( W, Z) &\le C \biggl( \frac{1}{ \min \{ T_1, T_2 \}  } + \int_{ 0  }^{ \min \{ T_1, T_2 \}  } | \varphi(t) - e^{-t^2/2} | \frac{1}{t} dt \biggr).
\end{align*}
Therefore, whenever $a^{(1)} \le 1/6$, Lemma \ref{lem:sunklodas_2} yields
\begin{align}\label{eq:bound_on_kolmogov}
	d_\cK( W, Z)  &\le C( T_1^{-1} + a^{(0)} + a^{(1)} + a^{(2)} + b^{(0)} + b^{(2)}T_1).
\end{align}
Hence, the goal is to establish \eqref{eq:phi_ode} 
and to estimate the constants $a^{(i)}, b^{(i)}$. To this end, we 
use Lemma \ref{lem:sunklodas_1} and (FCB).

\subsection{Proof of Theorem \ref{thm:clt_tikhomirov}(2)}
Let 
$$
2 + \frac{1}{2} \log (N) /\log(2)  < K \le N
$$
and recall the definitions of $W_j^{(\ell)}$, $\eta_j^{(r)}$, and
 $\hat{W}_j^{(\ell)}$ in \eqref{eq:wj} and \eqref{eq:eta}.
We will only need (FCB) for a particular subclass of 
separately Lipschitz continuous
complex-valued
functions $g$, which is described by (A1)-(A4) in the 
following result. Recall from Remark \ref{rem:fcb} that \eqref{eq:fcb} extends 
to complex valued functions $g$ as well as to the estimate 
\eqref{eq:fcb_many_gaps}.

\begin{pro}\label{lem:conditions} Assume \eqref{eq:fcb}
and define $\hat R$ as in Theorem \ref{thm:clt_tikhomirov}. Then the following hold for any $t \in \RR$.
	\begin{itemize}
		\item[(A1)]{For all $0  \le i < j < N$,
			\begin{align*}
				| \mu( \tilde Y_i \tilde Y_j ) | \le C C_* L^2  R(j-i).
		\end{align*}
	}
		\item[(A2)] Whenever, $0 \le j,\ell < N$ satisfy $|\ell - j| > 2 K r$ where $r \ge 1$,
		\begin{align*}
			|  \mu( \eta_j^{(r)}  (\eta_\ell^{(r)} )^* )  -  \mu( \eta_j^{(r)})  \mu ( (\eta_\ell^{(r)} )^* ) | \le C C_* t^2 \sigma_N^{-2} L^2 (Kr)^2 R ( | \ell - j | - 2Kr).
		\end{align*}
		Here, $z^*$ denotes the complex conjugate of 
        $z \in \CC$. 
		\item[(A3)]{For all $r \ge 2$ and $0 \le j < N$,
			\begin{align*}
				&\biggl| \mu \biggl[  i Y_j \prod_{\ell = 1}^{r-1} \xi_j^{(\ell)} ( e^{it W_j^{(r)}} - \mu ( e^{it W_j^{(r)}} ) )  \biggr] \biggr| \\ 
				&\le C C_* L ( 1 + |t| \sigma_N^{-1} ) \sigma_N^{-1} (2K L  |t| \sigma_N^{-1})^{r-1}  \biggl\{   \hat{R} (  K  ) 
				+   R(2rK + 2) \bigg\}.
			\end{align*}
		}
		
		\item[(A4)]{For all $0 \le j < N$,
			\begin{align*}
				| \mu( Y_j e^{it W_j^{(1)} } ) |  \le C_* L (  \sigma_N^{-1}  +  \sigma_N^{-2}   |t| ) \hat R (K ).
			\end{align*}
		}
		
	\end{itemize}
	
\end{pro}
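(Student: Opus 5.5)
Each of (A1)--(A4) will be obtained by writing the expectation as $\int G(x,x)\,d\mu$ for a suitable separately Lipschitz $g$, and then using (FCB), or (FCB') from Remark~\ref{rem:fcb}, to pass to a decoupled integral in which some factor is centered and therefore vanishes.

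\textbf{(A1).} This is \cite[Proposition 2.6]{LNN25}, already used in part (i). Directly: take $g(x_i,x_j)=x_ix_j$, so that $\|g\|_{\mathrm{Lip}}\le CL^2$. The decoupled term equals $\mu(\tilde Y_i)\mu(\tilde Y_j)=0$, and (FCB) gives the bound.

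\textbf{(A2).} Write $\eta_j^{(r)}=e^{-it\hat W_j^{(r)}}-1$. It is a function of the block $\{n:|n-j|\le Kr\}$, which contains at most $2Kr+1$ indices. Similarly $\eta_\ell^{(r)}$ depends on its own block. When $|\ell-j|>2Kr$ the two blocks are separated by a gap of at least $|\ell-j|-2Kr$. Set
\[
g=\bigl(e^{-it\sigma_N^{-1}\sum_{\text{block }j}x_n}-1\bigr)\bigl(e^{it\sigma_N^{-1}\sum_{\text{block }\ell}x_n}-1\bigr).
\]
Its separate Lipschitz constant is at most $|t|\sigma_N^{-1}\cdot\sup|\text{other factor}|$. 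Bounding the other factor by $|t|\sigma_N^{-1}(2Kr+1)L$ (using $|e^{ix}-1|\le|x|$) gives $\|g\|_{\mathrm{Lip}}\le Ct^2\sigma_N^{-2}LKr$ together with a sup bound of the same type. Multiplying in the $L$ factors from the normalisation gives the stated $C C_*t^2\sigma_N^{-2}L^2(Kr)^2R(|\ell-j|-2Kr)$. This is crude but sufficient. If the block of $j$ lies to the right of the block of $\ell$, swap the roles of the two blocks.

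\textbf{(A4).} $W_j^{(1)}$ omits the indices with $|n-j|\le K$. I split it as $W_j^{(1)}=W_-+W_+$, the sums over $n<j-K$ and $n>j+K$. Take
\[
g=x_j\,e^{it\sigma_N^{-1}(\sum_{n<j-K}x_n+\sum_{n>j+K}x_n)}.
\]
Applying (FCB') with two gaps, at $j-K\to j$ and $j\to j+K$, replaces the integral by $\iiint$, in which the factor $\mu(\tilde Y_j)=0$ appears. The Lipschitz constant in $x_j$ is $1$, and in the other variables it is at most $L|t|\sigma_N^{-1}$. The gaps exceed $K$, so the error is $C_*(1+L|t|\sigma_N^{-1})\hat R(K)$. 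Multiplying by $\sigma_N^{-1}$ from $Y_j=\sigma_N^{-1}\tilde Y_j$ gives (A4).

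\textbf{(A3), the main obstacle.} The product $Y_j\prod_{\ell=1}^{r-1}\xi_j^{(\ell)}$ depends only on indices with $|n-j|\le K(r-1)$. The second factor, $e^{itW_j^{(r)}}-\mu(e^{itW_j^{(r)}})$, depends only on indices with $|n-j|>Kr$. I split the latter into a left part $A$ and a right part $B$, so the indices fall into three groups, left, middle, right, with gaps of size at least $K$.

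Using (FCB') across both gaps, I replace $\mu[\Phi_{\mathrm{mid}}(e^{it(A+B)}-\mu e^{it(A+B)})]$ by the fully decoupled triple integral. That triple integral factorises as $\mu(\Phi_{\mathrm{mid}})\cdot(\mu\otimes\mu)(e^{itA}e^{itB})$ minus $\mu(\Phi_{\mathrm{mid}})\mu(e^{it(A+B)})$. The difference between the product of the $A$ and $B$ integrals and the joint integral is a single FCB step with a gap of about $2rK$; this is where the term $R(2rK+2)$ arises.

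Next I bound the Lipschitz norm of $g=x_j\prod_\ell(e^{it\sigma_N^{-1}s_\ell}-1)(\cdots)$. Its sup is at most $L(2KL|t|\sigma_N^{-1})^{r-1}$, because each annulus contains at most $2K$ indices and $|e^{ix}-1|\le|x|$. Its separate Lipschitz constant is of the same order times $(1+|t|\sigma_N^{-1})$. Together with the factor $\sigma_N^{-1}$ from $Y_j$, this yields the stated bound.

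The delicate point is bookkeeping. The partition must consist of consecutive blocks in increasing index order, as (FCB') requires. One must also check that the Lipschitz bound does not pick up an extra factor $K$: differentiating one exponential gives $|t|\sigma_N^{-1}$, and the remaining factors are bounded via the annulus sizes.
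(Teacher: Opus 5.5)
Your proposal is correct and matches the paper's proof. Each of (A1)--(A4) is obtained by writing the quantity as $G(x,x)$ for an explicit separately Lipschitz $g$ (the paper's $g_1,\dots,g_4$) and applying (FCB)/(FCB'): centering kills the decoupled term in (A1) and (A4), and (A3) uses the same left/middle/right split, with $R(2rK+2)$ coming from re-coupling $U_j^{(r)}$ and $V_j^{(r)}$. The only differences are that the paper does the two-sided decoupling in (A3) as two successive single-gap (FCB) steps (its terms $I$ and $II$) rather than one application of (FCB'), and that it cites \cite{LNN25} for (A1) instead of giving your one-line argument.
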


\begin{proof} Throughout the proof we use the conventions introduced at the beginning 
of the proof of Theorem \ref{thm:clt_tikhomirov} in Section \ref{s:5.2} for elements $x \in [-L, L]^{J}$ where $J \subset \ZZ$.

\smallskip 
	
\noindent \textbf{(A1):} The desired inequality follows from \cite[Proposition 5.6]{LNN25}.
	
\smallskip 
	
	\noindent \textbf{(A2):} 
	For $J = \{  0 \le n < N \: : \: |n - j| \le Kr  \}
    \cup 
    \{  0 \le n < N \: : \: |n - \ell| \le Kr  \}
    $, define 
	\begin{align*}
		g_1(x_J)
		= \biggl\{   \exp\biggl(  -it  \sigma_N^{-1} \sum_{ \substack{0 \le n < N \\  |n-j| \le Kr } } x_n   \biggr)  - 1  \biggr\} \biggl\{   \exp\biggl(  it  \sigma_N^{-1} \sum_{ \substack{
				0 \le n < N \\	 |n-\ell| \le Kr } } x_n   \biggr)  - 1  \biggr\}.
	\end{align*}
	Using $|e^{ix} - 1| \le |x|$, 
	a straightforward computation shows that
	\begin{align*}
		\Vert g_1 \Vert_{  \text{Lip} } \le C t^2 \sigma_N^{-2} L^2 (Kr)^2.
	\end{align*}
	Since
	$
	g_1(\tilde Y_{J} ) = \eta_j^{(r)} (\eta_\ell^{(r)} )^* ,
	$
	we see that (A2) follows by an application of (FCB) with $g_1$ 
	as above.
	\smallskip 
	
	\noindent\textbf{(A3):} 
	We write $W_j^{(r)} = U_j^{(r)} + V_j^{(r)}$ where
	\begin{align*}
		U_j^{(r)} = \sum_{n = 0}^{j - rK -1}  Y_n \quad \text{and} \quad  V_j^{(r)} = \sum_{n = j + rK + 1}^{N - 1}  Y_n.
	\end{align*}
	Next, we decompose 
	\begin{align*}
		\mu \biggl[  i Y_j \prod_{\ell = 1}^{r-1} \xi_j^{(\ell)} ( e^{it W_j^{(r)}} - \mu ( e^{it W_j^{(r)}}  ) )  \biggr] = I + II + III,
	\end{align*}
	where:
	\begin{align*}
		I &= \mu \biggl[  i Y_j \prod_{\ell = 1}^{r-1} \xi_j^{(\ell)}  e^{it U_j^{(r)}}e^{it V_j^{(r)}}  \biggr] - \mu \biggl[  i Y_j \prod_{\ell = 1}^{r-1} \xi_j^{(\ell)}  e^{it U_j^{(r)}} \biggr]  \mu[ e^{it V_j^{(r)}} ], \\
		II &= \mu[ e^{it V_j^{(r)}} ]  \biggl( \mu \biggl[  i Y_j \prod_{\ell = 1}^{r-1} \xi_j^{(\ell)}  e^{it U_j^{(r)}} \biggr]  - \mu \biggl[  i Y_j \prod_{\ell = 1}^{r-1} \xi_j^{(\ell)}  \biggr]  \mu[e^{it U_j^{(r)}} ]     \biggr), \\
		III &= \mu \biggl[  i Y_j \prod_{\ell = 1}^{r-1} \xi_j^{(\ell)}  \biggr] \biggl(   \mu[e^{it U_j^{(r)}} ] \mu[e^{it V_j^{(r)}} ]
		- \mu[e^{it U_j^{(r)}} e^{it V_j^{(r)}} ] 
		\biggr).
	\end{align*}

	For
	$$
	J = \ZZ_+ \cap \biggl( [0, j - r K ) \cup [j - K(r-1), j + K(r-1)] \cup (j+ rK, N)  \biggr),
	$$
	define
	\begin{align*}
		&g_2( x_J ) =   i  \sigma_N^{-1} x_j \prod_{\ell = 1}^{r-1} \biggl(   \exp \biggl\{ it \sum_{ 
			\substack{0 \le n < N \\  K(\ell - 1) < |n-j| \le K \ell } } \sigma_N^{-1} x_n \biggr\} - 1   \biggr) e^{ it    \sum_{n = 0}^{j - rK -1}  \sigma_N^{-1} x_n }  e^{it \sum_{n = j + rK + 1}^{N - 1}  \sigma_N^{-1} x_n}.
	\end{align*}
	Then, 
	\begin{align*}
		\Vert g_2 \Vert_{ \text{Lip}} \le C L \sigma_N^{-1}  (   2K L |t| \sigma_N^{-1}  )^{r-1}.
	\end{align*}
	It follows by (FCB) that 
	\begin{align*}
		|I| \le  C C_* L \sigma_N^{-1}  (2K L  |t| \sigma_N^{-1})^{r-1} R(  K + 1 ).
	\end{align*}
	Similarly,
	\begin{align*}
		|II| &\le C C_* \Vert g_2 \Vert_{ \text{Lip} } R(K + 2) \le 
		C C_* L \sigma_N^{-1}  (2K L  |t| \sigma_N^{-1})^{r-1} \hat{R}(  K  ).
	\end{align*}
	
	Note that $III = 0$ if $j + rK + 1 > N - 1$. Otherwise, define
	$$
	g_3(x_J) = \exp\biggl(  -it  \sigma_N^{-1}  \sum_{n = 0}^{j - rK -1}  x_n   \biggr)   \exp\biggl(  it  \sigma_N^{-1} 
	\sum_{n = j + rK + 1}^{N - 1} 
	x_n   \biggr)  ,
	$$
	which 
	satisfies $\Vert g_3 \Vert_{\text{Lip}} \le 1 + |t| \sigma_N^{-1}$. Using this inequality together with 
	$\Vert \xi_j^{(\ell)} \Vert_\infty \le |t| \sigma_N^{-1} L 2 K$ and  \eqref{eq:fcb}, we obtain
	\begin{align*}
		|III| &\le   L  \sigma_N^{-1}   \biggl[  \prod_{\ell = 1}^{r-1} \Vert \xi_j^{(\ell)} \Vert_\infty  \biggr] \biggl|  \mu[e^{it U_j^{(r)}} ] \mu[e^{it V_j^{(r)}} ]
		- \mu[e^{it U_j^{(r)}} e^{it V_j^{(r)}} ]  \biggr| \\
		&\le  L  \sigma_N^{-1} ( |t| \sigma^{-1}_N L 2 K  )^{r-1}  C_* ( 1 + |t| \sigma_N^{-1} ) R(2rK + 2).
	\end{align*}
	
	\noindent\textbf{(A4):} For 
	$$
	g_4(x_J) = \sigma_N^{-1}  x_j \exp \biggl\{ it \sigma_N^{-1}  \sum_{
		\substack{
			0 \le n < N \\
			|n - j | >  K
	}}  x_n \biggr\}
	$$
	with 
	$
	J = \{  0 \le n < N \: : \:  |n - j | > K \} \cup \{ j \}
	$,
	we have 
	$$
	\Vert g_4 \Vert_{\text{Lip}} \le  \sigma_N^{-1} L +  \sigma_N^{-2} L  |t|.
	$$
	Since $g_4(\tilde Y_J) = Y_j e^{it W_j^{(1)} }$, 
	it follows from \eqref{eq:fcb_many_gaps} that 
	\begin{align*}
		| \mu( Y_j e^{it W_j^{(1)} } ) |  \le C C_* (  \sigma_N^{-1} L +  \sigma_N^{-2} L  |t| ) \hat R (K).
	\end{align*}
	
\end{proof}

Theorem \ref{thm:clt_tikhomirov}(2) follows immediately from Proposition \ref{lem:conditions}
and the following theorem.

\begin{thm}\label{thm:tikhomirov_aux}
	Conditions (A1)-(A4) in Proposition \ref{lem:conditions}
	imply \eqref{eq:clt_tikhomirov}.
\end{thm}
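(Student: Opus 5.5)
The plan is to put $\varphi'$ into the form \eqref{eq:phi_ode} using Lemma~\ref{lem:sunklodas_1}, and then read off \eqref{eq:clt_tikhomirov} from \eqref{eq:bound_on_kolmogov}. The main term is $E_1\varphi(t)$. Since $a_j^{(1)}=\mu(iY_j(e^{it\hat W_j^{(1)}}-1))$, expanding $e^{ix}-1=ix+O(x^2)$ gives
\[
E_1=-t\sum_j\mu(Y_j\hat W_j^{(1)})+O\Big(t^2\sum_j\mu(|Y_j||\hat W_j^{(1)}|^2)\Big).
\]
Using (A1), the first sum equals $1-\sum_j\sum_{|n-j|>K}\mu(Y_jY_n)$. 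The truncated covariances are bounded by $CC_*L^2\sigma_N^{-2}N\sum_{j\ge K}R(j)$, so this error goes into $a^{(1)}$. The remainder is $t^2\cdot O(NL^3K^2\sigma_N^{-3})$, which goes into $a^{(2)}$. Hence $E_1=-t+\theta(a^{(1)}|t|+a^{(2)}t^2)$.

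For $E_2$ I use $|a_j^{(r)}|\le L\sigma_N^{-1}(2KL|t|\sigma_N^{-1})^r$ and $|\mu(\eta_j^{(r)})|\le|t|\sigma_N^{-1}\mu(|\hat W_j^{(r)}|)$. The assumption $K>2+\tfrac12\log_2N$ is what makes the geometric series in $r$ summable: I restrict to $|t|\le T_1:=c\,\sigma_N/(KL)$, so that $2KL|t|\sigma_N^{-1}\le 1/2$, and choose $k\approx\log_2 N$ so that $2^{-k}\lesssim N^{-1/2}$. The $r\ge3$ terms of $E_2$ are then dominated by $r=3$, and together they contribute to $a^{(2)}t^2$ a term of size $NL^3K^2\sigma_N^{-3}$. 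The tail $E_4$ is at most $NL\sigma_N^{-1}2^{-k}$ and goes into $b^{(0)}$.

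$E_5$ and $E_6$ are handled directly by (A3) and (A4). Summed over $r$ and $j$, they give $b$-contributions of size $C_*LN\sigma_N^{-1}\hat R(K)(1+|t|\sigma_N^{-1})$. For $|t|\le T_1$ this is absorbed into $b^{(0)}$, and into $a^{(0)}$ after dividing by $|\varphi|$. Because $\varphi$ may vanish, I keep these as additive $\theta b$ terms rather than divide. They produce the term $CC_*LN\sigma_N^{-1}\hat R(K)$ in \eqref{eq:clt_tikhomirov}.

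The main obstacle is $E_3$. Here Cauchy--Schwarz gives
\[
|\mu[(\eta_j^{(r)}-\mu\eta_j^{(r)})e^{itW}]|\le \operatorname{Var}(\eta_j^{(r)})^{1/2}.
\]
This alone does not sum well over $j$. I will follow Tikhomirov and apply Cauchy--Schwarz to the whole sum $\sum_j a_j^{(r-1)}(\eta_j^{(r)}-\mu\eta_j^{(r)})$. Its second moment is the double sum of covariances $\mu(\eta_j(\eta_\ell)^*)-\mu(\eta_j)\mu(\eta_\ell^*)$. For $|\ell-j|>2Kr$ I bound these by (A2); for near-diagonal pairs I use $\|\eta_j^{(r)}\|^2\lesssim t^2\sigma_N^{-2}L^2(Kr)^2$. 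This yields
\[
\mu\Big|\sum_j a_j^{(r-1)}(\eta_j^{(r)}-\mu\eta_j^{(r)})\Big|^2\lesssim \big(\max_j|a_j^{(r-1)}|\big)^2\, t^2\sigma_N^{-2}L^2K^2r^2\, N\big(Kr+C_*\hat R_1(N)\big).
\]
After summing over $r$ with the geometric factors, $E_3$ is of order
\[
|t|\sqrt N\sigma_N^{-2}L^2K^{3/2}\big(1+\sqrt{C_*\hat R_1(N)}\big)+|t|\sqrt N\sigma_N^{-2}L^2K,
\]
which goes into $b^{(2)}$-type terms (linear in $|t|$, hence into $b^{(0)}$ via $\min\{|t|^{-1},|t|\}$). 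Matching the bookkeeping so that $a^{(1)}\le1/6$ holds is where I expect trouble. If the bound exceeds $1/6$, the Kolmogorov estimate is trivial anyway, since the corresponding term on the right of \eqref{eq:clt_tikhomirov} is then at least a constant. Finally, \eqref{eq:bound_on_kolmogov} with $T_1^{-1}=CLK\sigma_N^{-1}$ yields the last term of \eqref{eq:clt_tikhomirov}, and the remaining terms come from the contributions collected above.
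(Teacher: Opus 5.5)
Your architecture is the paper's: the decomposition of Lemma~\ref{lem:sunklodas_1}, $E_1$ via Taylor expansion and (A1), $E_2$ via the bounds $u^{(r)}$, $E_3$ via Cauchy--Schwarz on the whole sum plus (A2), $E_5,E_6$ via (A3)--(A4), then Lemma~\ref{lem:sunklodas_2} with Esseen. The genuine problem is how you feed $E_3$ into Lemma~\ref{lem:sunklodas_2}; as written, this does not yield \eqref{eq:clt_tikhomirov}.

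Your own second-moment estimate gives a bound that is \emph{quadratic} in $t$. One factor $|t|$ comes from $u^{(r-1)}$ with $r\ge2$, and one from $\|\eta_j^{(r)}\|_\infty$. Hence $|E_3|\lesssim t^2\sqrt N\sigma_N^{-3}L^3K^{5/2}\{1+\sqrt{C_*\hat R_1(N)}\}$. You then spend one factor via $|t|\le T_1$, obtaining a term $c|t|$ with $c=\sqrt N\sigma_N^{-2}L^2K^{3/2}(\dots)$, and say it goes into $b^{(0)}$ ``via $\min\{|t|^{-1},|t|\}$''. That fails. The factor $\min\{|t|^{-1},|t|\}$ is the \emph{output} of Lemma~\ref{lem:sunklodas_2} for a constant $b^{(0)}$; the lemma has no linear term in $b$. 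To dominate $c|t|$ on all of $|t|\le T_1$ you would need $b^{(0)}\ge cT_1\asymp\sqrt N\sigma_N^{-1}LK^{1/2}(\dots)$, which does not even tend to zero when $\sigma_N\asymp\sqrt N$. Bypassing the lemma does not rescue it either. A forcing term $c|t|$ gives an error of order $c\min\{1,t^2\}$ in $|\varphi(t)-e^{-t^2/2}|$, and Esseen's integral $\int^{T_1}dt/t$ then only yields $c\log T_1$, an extra logarithm absent from \eqref{eq:clt_tikhomirov}. The mechanism you need is that $t^2$-terms go into $b^{(2)}$ and cost only $b^{(2)}T_1$. If you keep $E_3$ quadratic, $b^{(2)}T_1$ gives exactly the term $C\sqrt{C_*}\sqrt N\sigma_N^{-2}L^2K^{3/2}\{1+\sqrt{\hat R_1(N)}\}$.

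The same issue affects $E_4$ as you state it. Placed in $b^{(0)}$, your bound $NL\sigma_N^{-1}2^{-k}$ is harmless only if $2^{-k}\lesssim N^{-1}$, i.e.\ genuinely $k\ge\log_2N$. With $2^{-k}\lesssim N^{-1/2}$, which is what you wrote and what the hypothesis on $K$ is tailored to, it gives $\sqrt N L\sigma_N^{-1}=O(1)$. The paper instead takes $2+\tfrac12\log_2N\le k\le K$ and writes $(2KL|t|\sigma_N^{-1})^k\le(2KL|t|\sigma_N^{-1})^2\,2^{-(k-2)}\le4K^2L^2t^2\sigma_N^{-2}N^{-1/2}$. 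This puts $E_4$ into $b^{(2)}$, and $b^{(2)}T_1$ produces the term $C\sqrt N\sigma_N^{-2}L^2K$. Relatedly, the hypothesis $K>2+\tfrac12\log_2N$ is not what makes the series in $r$ summable; that comes from $|t|\le T_1$. The hypothesis only guarantees such a $k\le K$ exists, and the paper uses $k\le K$ in its H\"older step for $E_3$. With these two corrections to the bookkeeping, plus dismissing the case $a^{(1)}>1/6$ as you (and the paper) do, your argument goes through.
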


\begin{proof}
Recall the following decomposition from Lemma \ref{lem:sunklodas_1}:
\begin{align}\label{eq:decomp_proof}
	\varphi'(t) = (E_1 + E_2) \varphi(t) + \sum_{i=3}^6 E_i.
\end{align}
We will estimate each term $E_i$ separately using (A1)-(A4). Throughout this proof, $\theta(t)$ 
denotes a generic complex valued function with $|\theta(t)| \le 1$. The definition of $\theta(t)$ may 
change from one line to the next. 
\smallskip 

\noindent($E_1$): For all $x \in \RR$ and $k \ge 0$,
\begin{align}\label{eq:exp_approx}
	\biggl| e^{ix} - \sum_{\nu = 0}^{k} \frac{(ix)^\nu }{\nu !} \biggr| \le \frac{ |x|^{k+1} }{(k + 1)!}.
\end{align}
Applying \eqref{eq:exp_approx} with $k=1$, we obtain
\begin{align*}
	E_1 = \sum_{j=0}^{N-1} \mu \biggl(  i Y_j  \xi_j^{(1)}  \biggr) = -t  \sum_{j=0}^{N-1} \mu(Y_j \hat{W}_j^{(1)} ) + \theta(t) C  t^2  N L^3 K^2 \sigma_N^{-3}.
\end{align*}
Since $\mu(W^2) = 1$ 
and $W = W_j^{(1)} + \hat{W}_j^{(1)}$, we have 
\begin{align*}
	\sum_{j=0}^{N-1} \mu(Y_j \hat{W}_j^{(1)} )  = 1 - \sum_{j=0}^{N-1} \mu(Y_j W_j^{(1)} ).
\end{align*}
By (A1),
\begin{align*}
	\biggl| \sum_{j=0}^{N-1} \mu(Y_j W_j^{(1)} )  \biggr| \le \sigma_N^{-2} \sum_{j=0}^{N-1} \sum_{  \substack{0 \le n < N \\ | n -j | > K }   } | \mu(\tilde Y_j \tilde Y_n) | \le C C_* L^2 \sigma_N^{-2} N \sum_{j=K}^{N-1} R(j).
\end{align*}
It follows that, with a redefined $\theta(t)$,
\begin{align*}
	E_1 = -t +  \theta(t) C_1 \biggl\{  |t| L^2  \sigma_N^{-2} N \sum_{j=K}^{N-1} R(j) + C_* t^2  N L^3 K^2 \sigma_N^{-3} \biggr\}.
\end{align*}

\noindent$(E_4)$:
By $\eqref{eq:exp_approx}$,
\begin{align*}
	\Vert \xi_j^{(\ell)} \Vert_\infty = \Vert e^{it  ( W_j^{(\ell - 1)}  - W_j^{(\ell)} ) } - 1 \Vert_\infty  \le  |t| \sigma_N^{-1} L 2 K
\end{align*}
and
\begin{align}\label{eq:eta_bound}
	\Vert \eta_j^{(r)} \Vert_\infty = \Vert e^{-i t \hat{W}_j^{(r)} } - 1 \Vert_\infty  \le  |t| \sigma_N^{-1} L 3Kr.
\end{align}
Hence,
\begin{align}\label{eq:a_j_bound}
	| a_j^{(r)} | \le \mu \biggl| i Y_j \prod_{\ell = 1}^{r} \xi_j^{(\ell)}  \biggr| \le \sigma_N^{-1} L ( |t| \sigma_N^{-1} L 2K  )^r = : u^{(r)}.
\end{align}
Consequently, 
\begin{align*}
	|E_4| &= \biggl|  \sum_{j=0}^{N-1} \mu \biggl(   i Y_j \prod_{\ell = 1}^k \xi_j^{(\ell)} e^{it W_j^{(k)} }  \biggr)  \biggr|
	\le N u^{(k)}
	\le N \sigma_N^{-1} L ( |t| \sigma_N^{-1} L 2K  )^k
	\le C_4 t^2 \sqrt{N} \sigma_N^{-3} L^3 K^2,
\end{align*}
whenever
\begin{align}\label{eq:ass_k_t}
	k \ge 2 + \frac{1}{2} \frac{ \log (N) }{ \log(2) } \quad \text{and} \quad  |t| \le  4^{-1} \sigma_N L^{-1} K^{-1} =: T_1
\end{align}
From now on we fix
$$
2 + \frac{1}{2} \frac{ \log (N) }{ \log(2) }  \le k \le K,
$$
which is possible by our assumption on $K$.
\smallskip 

\noindent\noindent$(E_2)$:
For $|t| \le T_1$, using \eqref{eq:eta_bound} and \eqref{eq:a_j_bound} we obtain
\begin{align*}
	|E_2| &= \biggl| \sum_{j=0}^{N-1} \biggl[ a_j^{(1)} \mu( \eta_j^{(2)} ) + \sum_{r=3}^k a_j^{(r-1)}  \mu( \eta_j^{(r)} + 1 )  \biggr] \biggr| 
	\le \sum_{j=0}^{N-1}   | a_j^{(1)} |  \Vert \eta_j^{(2)} \Vert_\infty + N \sum_{r=3}^k  | u^{(r-1)} |   \\
	&\le  C_2 N L^3 t^2 \sigma_N^{-3} K^2 + N \sigma_N^{-1} L ( |t| \sigma_N^{-1} L 2K  )^2 \sum_{r=0}^\infty   ( |t| \sigma_N^{-1} L 2K  )^r \le C_2 N \sigma_N^{-3}  L^3 K^2 t^2 .
\end{align*}

\noindent$(E_3)$: Denoting $\overline{\eta}_j^{(r)} = \eta_j^{(r)} - \mu(\eta_j^{(r)})$, by Jensen's inequality we have 
\begin{align*}
	\biggl|  \sum_{j=0}^{N-1} a_j^{(r-1)} \mu[  \overline{\eta}_j^{(r)} e^{itW } ] \biggr| 
	\le \biggl\{  \mu \biggl[ \biggl|   \sum_{j=0}^{N - 1}  a_j^{(r-1)}    \overline{\eta}_j^{(r)} \biggr|^2  \biggr]  \biggr\}^{1/2} \le 
	\biggl\{  
	\sum_{j=0}^{N-1} \sum_{p=0}^{N-1}  | a_j^{ (r-1) } a_p^{ (r-1) } | | \mu[    \overline{\eta}_j^{(r)}  ( \overline{\eta}_p^{(r)} )^* ] | 
	\biggr\}^{1/2}.
\end{align*}
Using \eqref{eq:a_j_bound}, \eqref{eq:eta_bound}, and (A2), we thus obtain the following for any $|t| \le T_1$:
\begin{align*}
	&|E_3| = \biggl| \sum_{r=2}^k \sum_{j=0}^{N-1}  a_j^{(r-1)} \mu[    \overline{\eta}_j^{(r)}  e^{itW } ] \biggr| \\
	&\le \sum_{r=2}^k  u^{(r-1)} \biggl[  \sum_{j=0}^{N-1} \sum_{p=0}^{N-1}  |  \mu( \eta_j^{(r)} ( \eta_p^{(r)}  )^*   )  -  \mu( \eta_j^{(r)})  \mu ( (\eta_p^{(r)} )^* ) | \biggr]^{1/2} \\
	&\le C_3 \sum_{r=2}^k  u^{(r-1)} \biggl[  \sum_{j=0}^{N-1} \sum_{  \substack{
			0 \le p < N \\
			|p-j| \le 2rK
	}  }  (t \sigma_N^{-1} L 2 Kr )^2
	+  \sum_{j=0}^{N-1} \sum_{ \substack{
			0 \le p < N \\
			|p-j| > 2rK
	}  }  C_*  t^2 \sigma_N^{-2} L^2 (Kr)^2 R( | p - j | - 2Kr)
	\biggr]^{1/2} \\
	&\le  C_3 K^{5/2} \sqrt{N} \sigma_N^{-3} L^3 t^2 \sum_{r=2}^k r^{\frac32}  2^{ - r + 2}   \\
	&+ C_3 \sqrt{C_*} t^2 \sigma_N^{-3} L^3 K^{5/2} \sum_{r=2}^k r  2^{ - r + 1}  \biggl( \sum_{j=0}^{N-1} \sum_{ \substack{
			0 \le p < N \\
			|p-j| > 2rK
	}  } R ( | p - j | - 2Kr) \biggr)^{1/2}.
\end{align*}
By H\"{o}lder's inequality,
\begin{align*}
	&\sum_{r=2}^k   r  2^{ - r }  \biggl( \sum_{j=0}^{N-1} \sum_{ \substack{
			0 \le p < N \\
			|p-j| > 2rK
	}  } R ( | p - j | - 2Kr) \biggr)^{1/2}\\
	&\le \biggl(  \sum_{r=2}^\infty r^2  4^{-r}  \biggr)^{1/2} \biggl(  \sum_{r=2}^k \sum_{j=0}^{N-1} \sum_{ \substack{
			0 \le p < N \\
			|p-j| > 2rK
	}  } R ( | p - j | - 2Kr)    \biggr)^{1/2} 
	\le C_3 \biggl( KN \sum_{j=1}^{N-1} R (j) \biggr)^{1/2},
\end{align*}
so that
$$
|E_3| \le C_3 \sqrt{C_*} \sqrt{N} \sigma_N^{-3} L^3 K^{5/2} t^2 \biggl\{ 1 + \biggl(  \sum_{j=1}^{N-1} R (j) \biggr)^{1/2} \biggr\}.
$$

\noindent$(E_5)$: For $|t| \le T_1$, we use (A3) to obtain
\begin{align*}
	| E_5 | &= \biggl| \sum_{r = 2}^k \sum_{j=0}^{N-1} \mu \biggl[  i Y_j \prod_{\ell = 1}^{r-1} \xi_j^{(\ell)} ( e^{it W_j^{(r)}} - \mu (  e^{it W_j^{(r)}} )  ) \biggr] \biggr| \\
	&\le   \sum_{r = 2}^k N C_5 C_* L ( 1 + |t| \sigma_N^{-1} ) \sigma_N^{-1} (2K L  |t| \sigma_N^{-1})^{r-1}  \biggl\{   \hat R(  K  ) 
	+   \hat R(2rK + 2) \bigg\}  \\
	&\le C_5 C_* L( 1 + |t| \sigma_N^{-1}   ) N \sigma_N^{-1} \hat{R}(K) 
	\le  C_5 C_* L N \sigma_N^{-1} \hat{R}(K).
\end{align*}

\noindent$(E_6)$: For $|t| \le T_1$, we use (A4) to obtain
\begin{align*}
	| E_6 |  &= \biggl| \sum_{j=0}^{N-1} \mu ( Y_j e^{it W_j^{(1)} } ) \biggr| \le C_6 C_* L N (  \sigma_N^{-1}  +  \sigma_N^{-2}   |t| )  \hat{R} (K) \le  C_6 C_* L N \sigma_N^{-1}    \hat{R} (K).
\end{align*}

Combining \eqref{eq:decomp_proof} with the foregoing estimates on $E_i$, it follows that 
\begin{align*}
	\varphi'(t) = (-t + \theta(t) a (t) ) \varphi (t) + \theta(t) b(t)
\end{align*}
holds whenever $|t| \le T_1$, where
\begin{align*}
	a(t) &= C_1 \biggl\{  |t| L^2  \sigma_N^{-2} N \sum_{j=K}^{N-1} R(j) + C_* t^2  N L^3 K^2 \sigma_N^{-3} \biggr\}
	+ C_2 N \sigma_N^{-3}  L^3 K^2 t^2 
	= a^{(1)} |t| + a^{(2)}t^2, \\
	a^{(1)} &= C_1 L^2  \sigma_N^{-2} N \sum_{j=K}^{N-1} R(j), \quad a^{(2)} = (C_1 + C_2) C_* L^3 K^2 N \sigma_N^{-3},
\end{align*}
and
\begin{align*}
	b(t) &= C_4 t^2 \sqrt{N} \sigma_N^{-3} L^3 K^2 
	+ C_3 \sqrt{C_*} \sqrt{N} \sigma_N^{-3} L^3 K^{5/2} t^2 \biggl\{ 1 + \biggl(  \sum_{j=1}^{N-1} R (j) \biggr)^{1/2} \biggr\} \\
	&+ C_5 C_* L N \sigma_N^{-1} \hat{R}(K) +  C_6 C_* L N \sigma_N^{-1}    \hat{R} (K) \\
	&= b^{(0)} + b^{(2)}t^2, \\
	b^{(0)} &=  ( C_5 + C_6) C_* L N \sigma_N^{-1} \hat{R}(K), \\
	b^{(2)} &= C_4  \sqrt{N} \sigma_N^{-3} L^3 K^2 
	+  C_3 \sqrt{C_*} \sqrt{N} \sigma_N^{-3} L^3 K^{5/2} \biggl\{ 1 + \biggl(  \sum_{j=1}^{N-1} R (j) \biggr)^{1/2} \biggr\}.
\end{align*}
Note that \eqref{eq:clt_tikhomirov} holds automatically if 
$a^{(1)} > 1/6$. Thus, we can assume that $a^{(1)} \le 1/6$.
Substituting $a^{(1)}$, $a^{(2)}$, $b^{(0)}$, $b^{(2)}$ into
\eqref{eq:bound_on_kolmogov}, we have 
\begin{align*}
	d_\cK(W, Z) &\le	C( T_1^{-1}  + a^{(1)} + a^{(2)} + b^{(0)} + b^{(2)}T_1) \\
	&\le LK \sigma_N^{-1} + C L^2  \sigma_N^{-2} N \sum_{j=K}^{N-1} R(j) + C C_* L^3 K^2 N \sigma_N^{-3} 
	+ C C_* L N \sigma_N^{-1} \hat{R}(K) \\
	&+  C \sqrt{N} \sigma_N^{-2}  L^2 K
	+  C \sqrt{C_*} \sqrt{N} \sigma_N^{-2} L^2 K^{3/2} \biggl\{ 1 + \biggl(  \sum_{j=1}^{N-1} R (j) \biggr)^{1/2} \biggr\}.
\end{align*}
The proof of Theorem \ref{thm:tikhomirov_aux} is complete.
\end{proof}

\section{Proof of Proposition \ref{ex:1b}}\label{s:ex1b}
We start from the verification of Condition 1.
The skew product map is measurable,
$\log \#\mathcal P_\omega=\log 4\in L^1(m)$, and each restriction
$T_\omega|_{I_j}$ is $C^\infty$, monotone, and maps $I_j$ onto $I$ for any $j$.
Moreover,
$|T_\omega^{\prime}(x)|=4$
for all $x\in I,$
and hence the regularity condition holds with $K=1$, and the hyperbolicity
condition holds with $n_1=1$ and $\kappa_1=\kappa_2=4$.
Furthermore, $\mathcal P_\omega^{(n)}$ consists of the $4^n$ intervals of length $4^{-n}$, hence the covering condition
holds with $M(n)=n$ and the positive diameter condition holds with
$\varepsilon_n=4^{-n}$.

\smallskip

We next verify Condition~2. Since $H_\omega$ is a single interval, we have
$h_\omega=1$ and hence $\log h_\omega\in L^1(m)$. The hole condition holds since for every $\omega$ one can choose any partition element
$I_j$ with $j\neq \omega_0$. Then $I_j\cap H_\omega=\varnothing$ and
$T_\omega(I_j)=I$. Moreover, since $T_\omega$ has exactly four inverse
branches, $F_\omega^{(1)}=4.$
On the other hand, since the hole is a partition element and every
surviving branch is full, 
$\xi_\omega^{(1)}=0.$
Hence, 
\[\frac{1}{n_1}\int_\Omega \log
F_\omega^{(1)}\,dm(\omega)
=\log 4>
\log\frac{\kappa_2}{\kappa_1}
+\int_\Omega \log(\xi_\omega^{(1)}+2) dm(\omega)=\log 2,\]
and the growth condition in Condition~2 is satisfied.

\smallskip

We now verify Condition~3. Since the hole coincides with one element of
the partition $\mathcal P_\omega$, the set $X_{\omega,n-1}$ is a union of
entire elements of $\mathcal P_\omega^{(n)}$, which implies
\[
\mathcal P_{\omega,*}^{(n)}
=\{P\in\mathcal P_\omega^{(n)}: P\subset X_{\omega,n-1}\},\]
and every element of $\mathcal P_{\omega,*}^{(n)}$ is a full branch for
$T_\omega^n$. In particular,
$\xi_\omega^{(n)}=0$
for every $n\in\mathbb N$ and $\omega=(\omega_n)_{n\in\mathbb Z}\in\Omega.$
Since $\|(T_\omega^n)^{\prime}\|_\infty=4^n$, we obtain
\[
(9+16\xi_\omega^{(n)})\|(T_\omega^n)^{\prime}\|_\infty^{-1}
=
9\cdot 4^{-n}
\le 9\cdot 2^{-n}.
\]
Hence, the first part of Condition~3 holds with
$\theta=1/2$ and $C_\theta=9.$

To compute $\lambda_\omega$ and $\delta_{\omega,n}$ explicitly, note that
the open transfer operator is given by
\[\mathcal L_\omega u(x)=
\frac{1}{4}\sum_{\substack{0\le k\le 3\\ k\neq \omega_0}}
u\left(\frac{x+k}{4}\right).\]
For $n\ge1$ and $a_0,\dots,a_{n-1}\in\{0,1,2,3\}$ satisfying
$a_j\neq \omega_j$ for all $0\le j\le n-1$, let
\[C_\omega(a_0,\dots,a_{n-1})
=
\bigcap_{j=0}^{n-1} T_\omega^{-j}(I_{a_j}).\]
 Define a probability measure $\nu_{\omega,\infty}$ on
$X_{\omega,\infty}$ by 
\[\nu_{\omega,\infty}\bigl(C_\omega(a_0,\dots,a_{n-1})\bigr)=3^{-n}\]
for every surviving $n$-cylinder. Since
\[\mathcal L_\omega \widehat{C_\omega(a_0,\dots,a_{n-1})}
=\frac{1}{4}
{\widehat{{C_{\sigma\omega}}(a_1,\dots,a_{n-1})}}\]
(recall that $\widehat A$ denotes the indicator function of a measurable set $A$), it follows that
\[\nu_{\sigma\omega,\infty}(\mathcal L_\omega u)=
\frac{3}{4}\nu_{\omega,\infty}(u)\]
for all $u\in BV(I)$. Therefore
$\lambda_\omega={3}/{4}$
for all $\omega\in\Omega.$
Consequently,
\[
{\rm esssup}_{\omega\in\Omega}\frac{\theta}{\lambda_\omega}
=\frac{1/2}{3/4}
=\frac{2}{3}<1.\]

Finally, every good interval in $\mathcal P_{\omega,g}^{(n)}$ is a
surviving $n$-cylinder and hence has $\nu_{\omega,\infty}$-mass exactly $3^{-n}$. Thus
$\delta_{\omega,n}=3^{-n}$
for all $\omega\in\Omega, n\in\mathbb N,$
and therefore
${\rm esssup}_{\omega\in\Omega}
(2\xi_\omega^{(n)}+1)\delta_{\omega,n}^{-1}=3^n<\infty$
for every $n\in\mathbb N.$
This proves Condition~3.

\smallskip

It remains to verify Condition~4. 
We first show that $\phi_\omega\equiv 1$ for all $\omega\in\Omega$.
Define the normalized open transfer operator by
\[\tilde{\mathcal L}_\omega u(x):=\lambda_{\omega}^{-1}\mathcal L_{\omega}u(x)=
\frac{1}{3}\sum_{\substack{0\le k\le 3\\ k\neq \omega_0}}
u\left(\frac{x+k}{4}\right).\]
Since $\mathcal L_{\omega}\phi_{\omega}=\lambda_{\omega}\phi_{\sigma\omega}$, we have 
\[{\rm var}(\phi_{\sigma\omega})={\rm var}(\tilde{\mathcal L}_{\omega}\phi_{\omega})\le \frac{1}{3}\sum_{\substack{0\le k\le 3\\ k\neq \omega_0}}{\rm var}(\phi_{\omega}|_{I_k})\le \frac{1}{3}{\rm var}(\phi_{\omega}).\] Iterating this, we obtain \[{\rm var}(\phi_{\omega})\le 3^{-n}{\rm var}(\phi_{\sigma^{-n}\omega})\ \text{for all}\ n.\]
Since $\omega\rightarrow||\phi_{\omega}||_{\rm BV}$ is tempered, the right-hand side tends to $0$ as $n\to \infty,$ which implies $\phi_\omega$ is constant. By the normalization $\nu_{\omega,\infty}(\phi_\omega)=1$ we have $\phi_\omega \equiv 1$.
Thus, the former part of Condition~4 holds with $C_\phi=1$.

We now establish the uniform spectral gap estimate. 
Write
\[\widetilde{\cL}_\omega^n u(x)
=(\lambda_\omega^n)^{-1}\cL_\omega^n u(x)
=3^{-n}\sum_{C}u\bigl(T_{\omega,C}^{-n}(x)\bigr),\]
where the sum runs over the $3^n$ surviving $n$-cylinders $C=C_\omega(a_0,\dots,a_{n-1})$.
Since $T$ is affine on each branch with $|T'|=4$, the inverse branch
$T_{\omega,C}^{-n}$ maps $I$ affinely onto $C$.
As $x$ ranges over $I$, the point $T_{\omega,C}^{-n}(x)$ ranges over $C$, and therefore
\begin{equation}\label{eq:osc_branch}
\mathrm{osc}\bigl(u\circ T_{\omega,C}^{-n}\bigr)
=\mathrm{osc}_C(u)
\le \mathrm{var}_C(u),
\end{equation}
where $\mathrm{osc}_C(u):=\sup_C u-\inf_C u$ and $\mathrm{var}_C(u)$ is the variation of $u$ restricted to $C$.
Since the surviving $n$-cylinders are pairwise disjoint, we have
\begin{equation}\label{eq:osc_est}
\mathrm{osc}\bigl(\widetilde{\cL}_\omega^n u\bigr)
\le 3^{-n}\sum_{C}\mathrm{osc}_C(u)
\le 3^{-n}\sum_{C}\mathrm{var}_C(u)
\le 3^{-n}\,\mathrm{var}(u).
\end{equation}
On the other hand, the conformal relation $\nu_{\sigma^n\omega,\infty}(\widetilde{\cL}_\omega^n u)=\nu_{\omega,\infty}(u)$ together with the fact that $\nu_{\sigma^n\omega,\infty}$ is a probability measure implies
\[
\inf\bigl(\widetilde{\cL}_\omega^n u\bigr)
\le \nu_{\omega,\infty}(u)
\le \sup\bigl(\widetilde{\cL}_\omega^n u\bigr).
\]
Combining this with \eqref{eq:osc_est} and recalling that $\phi_{\sigma^n\omega}\equiv 1$, we obtain
\begin{align*}
\left\|Q_{\omega,n}(u)\right\|_\infty
&=\left\|\widetilde{\cL}_\omega^n u-\nu_{\omega,\infty}(u)\,\phi_{\sigma^n\omega}\right\|_\infty\\
&=\left\|\widetilde{\cL}_\omega^n u-\nu_{\omega,\infty}(u)\right\|_\infty
\le \mathrm{osc}\bigl(\widetilde{\cL}_\omega^n u\bigr)
\le 3^{-n}\,\mathrm{var}(u)
\le 3^{-n}\,\|u\|_{\mathrm{BV}}
\end{align*}
for all $\omega\in\Omega$, $n\in\mathbb N$, and $u\in\mathrm{BV}(I)$.
Therefore Condition~4 holds with the deterministic constants $D=1$ and $\kappa=1/3$.

\section*{Acknowledgement}
J.~Lepp\"anen was supported by JSPS via the project LEADER.
Y.~Nakajima was supported by JSPS KAKENHI Grant  Number JP25K17282.
Y.~Nakano was supported by JSPS KAKENHI Grant  Number JP23K03188 and JST PRESTO Grant Number JPMJPR25K8.

\bibliographystyle{abbrvnat}
\bibliography{RQE}

\end{document}